\documentclass[11pt,reqno]{amsart}

\usepackage{amsfonts,amssymb,amsmath,mathrsfs,graphicx}
\usepackage{float}
\usepackage{epsfig}
\usepackage{subcaption}
\usepackage{graphicx,colortbl,here}
\usepackage{soul}
\usepackage{tikz}

\usepackage[normalem]{ulem}

\usepackage{amsthm}

\usepackage{bm}

\usepackage{booktabs}

\usetikzlibrary{calc}
\usetikzlibrary{decorations.pathmorphing}
\allowdisplaybreaks
\makeatletter

\newcommand{\defhighlighter}[3][]{%
  \tikzset{every highlighter/.style={color=#2, fill opacity=#3, #1}}%
}

\defhighlighter{yellow}{.5}

\newcommand{\highlight@DoHighlight}{
  \fill [ decoration = {random steps, amplitude=1pt, segment length=15pt}
        , outer sep = -15pt, inner sep = 0pt, decorate
        , every highlighter, this highlighter ]
        ($(begin highlight)+(0,8pt)$) rectangle ($(end highlight)+(0,-3pt)$);
}

\newcommand{\highlight@BeginHighlight}{
  \coordinate (begin highlight) at (0,0);
}

\newcommand{\highlight@EndHighlight}{
  \coordinate (end highlight) at (0,0);
}

\newdimen\highlight@previous
\newdimen\highlight@current

\DeclareRobustCommand*\highlight[1][]{%
  \tikzset{this highlighter/.style={#1}}%
  \SOUL@setup
  \def\SOUL@preamble{%
      \highlight@BeginHighlight
      \highlight@EndHighlight
    \end{tikzpicture}%
  }%
  \def\SOUL@postamble{%
    \begin{tikzpicture}[overlay, remember picture]
      \highlight@EndHighlight
      \highlight@DoHighlight
    \end{tikzpicture}%
  }%
  \def\SOUL@everyhyphen{%
    \discretionary{%
      \SOUL@setkern\SOUL@hyphkern
      \SOUL@sethyphenchar
      \tikz[overlay, remember picture] \highlight@EndHighlight ;%
    }{%
    }{%
      \SOUL@setkern\SOUL@charkern
    }%
  }%
  \def\SOUL@everyexhyphen##1{%
    \SOUL@setkern\SOUL@hyphkern
    \hbox{##1}%
    \discretionary{%
      \tikz[overlay, remember picture] \highlight@EndHighlight ;%
    }{%
    }{%
      \SOUL@setkern\SOUL@charkern
    }%
  }%
  \def\SOUL@everysyllable{%
    \begin{tikzpicture}[overlay, remember picture]
      \path let \p0 = (begin highlight), \p1 = (0,0) in \pgfextra
        \global\highlight@previous=\y0
        \global\highlight@current =\y1
      \endpgfextra (0,0) ;
      \ifdim\highlight@current < \highlight@previous
        \highlight@DoHighlight
        \highlight@BeginHighlight
      \fi
    \end{tikzpicture}%
    \the\SOUL@syllable
    \tikz[overlay, remember picture] \highlight@EndHighlight ;%
  }%
  \SOUL@
}
\makeatother

\makeatletter
\renewenvironment{proof}[1][\proofname]{\par
  \pushQED{\qed}%
  \normalfont
  \topsep6\p@\@plus6\p@\relax
  \trivlist
  \item[\hskip\labelsep\bfseries #1\@addpunct{.}]\ignorespaces
}{%
  \popQED\endtrivlist\@endpefalse
}
\makeatother

\title[Dancing Equilibria in Asymmetric Kuramoto Networks]{
Existence and Stability of Dancing Equilibria in Asymmetric Kuramoto Networks
}

\author[Sun]{Wen Sun}
\address[Wen Sun]{\newline School of Mathematical Sciences,
\newline Dalian University of Technology, Dalian 116024, China}
\email{SunWen\_Vincent@163.com}

\author[Wang]{Yu-Qing Wang}
\address[Yu-Qing Wang]{\newline School of Mathematical Sciences,
\newline Dalian University of Technology, Dalian 116024, China}
\email{yqwang202309@163.com}

\author[Dong]{Jiu-Gang Dong$^*$}
\address[Jiu-Gang Dong]{\newline School of Mathematical Sciences,
\newline Dalian University of Technology, Dalian 116024, China}
\email{jgdong@dlut.edu.cn}

\subjclass[2020]{34C15, 34D06, 34D20, 05C20} 

\keywords{Kuramoto networks, dancing equilibria, twisted states, orbital stability}

\thanks{\textbf{Acknowledgment.} This work was supported by the National Natural Science Foundation of China
through grant 12171069.}

\begin{document}

\newtheorem{theorem}{Theorem}[section]
\newtheorem{lemma}{Lemma}[section]
\newtheorem{corollary}{Corollary}[section]
\newtheorem{proposition}{Proposition}[section]
\newtheorem{remark}{Remark}[section]
\let\originalremark\remark
\let\endoriginalremark\endremark
\renewenvironment{remark}
  {\pushQED{\qed}\originalremark}
  {\popQED\endoriginalremark}

\newtheorem{definition}{Definition}[section]
\newtheorem{example}{Example}[section]

\let\originalexample\example
\let\endoriginalexample\endexample

\renewenvironment{example}
  {\pushQED{\qed}\originalexample}
  {\popQED\endoriginalexample}

\renewcommand{\theequation}{\thesection.\arabic{equation}}
\renewcommand{\thetheorem}{\thesection.\arabic{theorem}}
\renewcommand{\thelemma}{\thesection.\arabic{lemma}}
\newcommand{\bbr}{\mathbb R}
\newcommand{\bbz}{\mathbb Z}
\newcommand{\bbn}{\mathbb N}
\newcommand{\bbs}{\mathbb S}
\newcommand{\bbp}{\mathbb P}
\newcommand{\kp}{\kappa}
\newcommand{\mi}{\mathrm i}
\newcommand{\ddiv}{\textrm{div}}
\newcommand{\bn}{\bf n}
\newcommand{\rr}[1]{\rho_{{#1}}}
\newcommand{\thh}{\theta}
\def\charf {\mbox{{\text 1}\kern-.24em {\text l}}}
\renewcommand{\arraystretch}{1.5}

\begin{abstract}
  We study nonzero-frequency phase-locked motions in asymmetrically coupled Kuramoto networks.
  Such motions are relative equilibria with fixed phase differences and a nonzero common angular velocity, and we call them dancing equilibria.
  Their existence requires all coupling sums to have the same nonzero value.
  We show that neither symmetric coupling nor an acyclic associated digraph can support a dancing equilibrium.
  We introduce structurally equitable and $q$-twisted state equitable partitions and prove a partition-based criterion for the resulting class-constant profiles, with standard labeled $q$-twisted profiles recovered from singleton partitions.
  For the forward $m$-neighbor model, we characterize existence by an exact indivisibility criterion.
  Stability is studied modulo the common phase-shift direction.
  For general directed networks, strong connectivity and edgewise phase differences in $\left(-\pi/2,\pi/2\right)$ imply local orbital exponential stability and yield an explicit positively invariant set contained in the local basin of attraction.
  For arbitrary twisted indices, this contraction argument gives a low-winding stability regime with explicit positively invariant neighborhoods.
  For each existing $q$-twisted branch of the forward model, a discrete Fourier transform criterion yields local orbital exponential stability when all nonzero Fourier-mode factors are positive and nonlinear instability when at least one is negative.
  In the unstable case, the proof constructs explicit escaping real Fourier perturbations.
  We further derive additional explicit stability and instability ranges for arbitrary twisted indices in terms of constants $N$, $m$, and $q$.
  For the first two twisted branches, sharper arguments yield a first-mode transition criterion for $q=1$ and a complete finite-size classification for $q=2$, with the degenerate case in each branch handled separately.
\end{abstract}

\maketitle



\section{Introduction}\label{sec:introduction}
\setcounter{equation}{0}

Synchronization and phase locking in oscillator networks are central topics in nonlinear dynamics, with applications in physical, biological, chemical, and engineering systems \cite{A-B-D-S,D-B,St}.
Kuramoto-type models provide a versatile framework for studying the effects of natural-frequency heterogeneity, coupling strength, and network topology.
The classical Kuramoto model is widely used to study synchronization and phase locking \cite{Ku}.
Its finite-dimensional all-to-all form is
\begin{equation*}
  \dot{\theta}_i=\Omega_i+\frac{K}{N}\sum_{j=1}^{N}\sin\left(\theta_j-\theta_i\right), \quad i\in[N],
\end{equation*}
where $[N]:=\left\{1,2,\dots,N\right\}$, $\theta_i$ is the phase of oscillator $i$, $\Omega_i$ is its natural frequency, and $K>0$ is the coupling strength.
In the absence of coupling, oscillator $i$ rotates with angular velocity $\Omega_i$.
The coupling term depends only on pairwise phase differences and may lead to frequency synchronization or phase locking.
Ha and Ryoo studied finite-dimensional phase locking and critical coupling for the classical Kuramoto model, introduced the pathwise critical coupling strength, and derived explicit sufficient coupling bounds \cite{H-R}.
D\"orfler and Bullo provided a broader survey of synchronization in complex networks of phase oscillators \cite{D-B}.
To separate the effect of the weighted interaction network from natural-frequency heterogeneity, we assume that $\Omega_i=\Omega_0$ for every $i\in[N]$.
The rotating-frame transformation
\begin{equation*}
  \vartheta_i(t)=\theta_i(t)-\Omega_0 t,\quad i\in[N],
\end{equation*}
removes the common natural frequency without changing the phase differences.
After relabeling $\vartheta$ as $\theta$, we replace the uniform all-to-all coupling by a nonnegative weighted directed network:
\begin{equation}\label{eq:ackm}
  \dot{\theta}_i=\sum_{j=1}^{N}a_{ij}\sin\left(\theta_j-\theta_i\right),\quad i\in[N],
\end{equation}
where $A:=\left(a_{ij}\right)_{N\times N}$ and $a_{ij}\geq0$ is the coupling strength from oscillator $j$ to oscillator $i$.
Thus $a_{ij}>0$ means that oscillator $j$ acts on oscillator $i$, while the diagonal entries do not affect \eqref{eq:ackm}.
Weighted phase-oscillator models of this type appear in synchronization theory, power networks, biological systems, and multi-agent dynamics.
D\"orfler, Chertkov, and Bullo derived a synchronization condition for heterogeneous weighted oscillator networks in terms of the network topology, natural frequencies, and coupling strengths \cite{D-C-B}.
Jafarpour, Huang, Smith, and Bullo developed a framework for flow and elastic network equations on the $n$-torus, including vertex balance equations, winding structures, and network flows \cite{J-H-S-B}.

\vspace{0.5em}

The dynamics of \eqref{eq:ackm} depend strongly on the symmetry of $A$. When $A$ is symmetric, system \eqref{eq:ackm} is the gradient system
\begin{equation*}
  \dot{\theta}=-\nabla\mathcal V_A(\theta)
\end{equation*}
with the real-analytic potential
\begin{equation*}
  \mathcal V_A(\theta)=\frac{1}{2}\sum_{i,j=1}^{N}a_{ij}\left(1-\cos\left(\theta_j-\theta_i\right)\right).
\end{equation*}
Since $\mathcal V_A$ is real analytic on the compact phase space, the {\L}ojasiewicz gradient inequality implies convergence of every trajectory to an equilibrium in the rotating frame \cite{La,L-X}.
For the classical all-to-all model with identical natural frequencies, Dong and Xue proved frequency synchronization and convergence of phase differences for arbitrary initial data \cite{D-X}.
Local minimizers of $\mathcal V_A$ generate Lyapunov-stable equilibrium orbits. If the associated undirected graph is connected, the synchronized orbit is precisely the set of global minimizers.
Other critical points may be nonsynchronized local minima, saddle points, or degenerate equilibria.
The landscape of symmetric Kuramoto networks has been studied using nonconvex optimization, graph expansion, algebraic methods, and degeneracy analysis \cite{A-B-K-S-S-T,L-X-B,N-B-D-P-M-M,Scl}.
Symmetric finite-range ring networks have also been used to study synchronization, twisted states, and their basins of attraction.
Wiley, Strogatz, and Girvan investigated the basin sizes of coexisting twisted states in symmetrically coupled rings \cite{W-S-G}.
Zhang and Strogatz later showed that these basins can be octopus-like, with most of their volume contained in long tentacles rather than in the ball-like region near the attractor \cite{Z-S}.
For symmetric $A$, the coupling terms cancel in the summed locking equations. Therefore, nonzero-frequency phase locking can occur only under asymmetric coupling.

\vspace{0.5em}

For asymmetric coupling, neither the gradient structure nor the cancellation property persists in general.
Directed finite-range phase-oscillator networks may consequently exhibit collective behavior that is absent from their symmetric counterparts.
Jaros, Levchenko, Kapitaniak, and Maistrenko demonstrated chimera states and chaotic switching in unidirectionally coupled phase-oscillator networks with intermediate interaction ranges \cite{J-L-K-M}.
A directed network may also support a phase-locked configuration whose relative phases remain fixed while the entire configuration rotates with a nonzero common angular velocity.
A basic example is the nearest-neighbor model
\begin{equation}\label{eq:nearest_model}
  \dot{\theta}_i=K\sin\left(\theta_{\langle i+1\rangle}-\theta_i\right),\quad i\in[N],
\end{equation}
where $K>0$ and $\langle i+1\rangle$ is the unique element of $[N]$ congruent to $i+1$ modulo $N$.
System \eqref{eq:nearest_model} is the simplest directed circulant model in the present setting.
Rogge and Aeyels analyzed its phase-locked solutions and their stability for identical oscillators and also considered nonidentical natural frequencies and more general coupling functions \cite{R-A}.
Ha and Kang subsequently obtained explicit positive-measure subsets of the basins of the synchronized and splay states and proved nonlinear instability of the degenerate four-oscillator splay state \cite{H-K}.
We extend \eqref{eq:nearest_model} by allowing each oscillator to receive equal-strength input from its first $m$ forward neighbors. This gives the equal-strength forward $m$-neighbor model
\begin{equation}\label{eq:forward_m_model}
  \dot{\theta}_i=K\sum_{r=1}^{m}\sin\left(\theta_{\langle i+r\rangle}-\theta_i\right),\quad i\in[N],
\end{equation}
where $K>0$ and $m\in[N-1]$. For brevity, we refer to \eqref{eq:forward_m_model} as the forward $m$-neighbor model throughout the paper.
The case $m=1$ recovers the nearest-neighbor model studied in \cite{H-K,R-A}, while $m>1$ gives the directed finite-range extension analyzed below.
For comparison, the linear stability of $q$-states in symmetric finite-range rings has recently been studied within a broader framework for finite circulant Kuramoto networks \cite{Si-J-M-M-M-M-B}.

\vspace{0.5em}

Our goal is to characterize phase-locked motions with nonzero frequency in asymmetrically coupled networks and to determine their stability.
This shifts the focus from static configurations, in which every oscillator remains stationary, to coherent motions in which all phase differences remain fixed while the entire configuration rotates
with a nonzero common angular velocity. We refer to such motions as dancing equilibria.
We adopt the term dancing equilibrium from the sphere-valued multi-agent model introduced by Caponigro, Lai, and Piccoli for nonlinear opinion dynamics \cite{C-L-P}.
They studied static consensus and equilibrium configurations and showed that asymmetric interactions may also produce nonstationary motions in which all mutual distances remain fixed.
In representative examples, the agents undergo a rigid rotation on the sphere.
Piccoli subsequently discussed dancing equilibria as nontrivial asymptotic states of sphere-valued multi-agent systems and emphasized the role of interaction asymmetry \cite{Pi}.
In the present phase-oscillator setting, a dancing equilibrium is precisely a nonzero-frequency phase-locked motion.
Equivalently, it is a relative equilibrium generated by the common phase-shift symmetry, with common angular velocity $\Omega\neq0$.
This viewpoint leads to two complementary questions:
\begin{quote}
  \textbf{Existence.}
  \emph{Which directed weighted networks can support a configuration whose phase differences remain fixed while all oscillators rotate together with the same nonzero angular velocity?}
\end{quote}
\begin{quote}
  \textbf{Stability.}
  \emph{When such a rotating configuration exists, under what conditions do nearby phase configurations return to the same rotating pattern, up to a common shift of all phases?}
\end{quote}

\vspace{0.5em}

Our main results concern the existence and stability of nonzero-frequency locked profiles generated by asymmetric coupling.
We first show that a dancing equilibrium requires asymmetric coupling and a directed cycle in the associated digraph (Theorem \ref{thm:main_necessary_obstructions}).
We then introduce structural equitability and $q$-twisted state equitability and establish a partition-based criterion for class-constant profiles, with standard labeled $q$-twisted profiles recovered from singleton partitions (Theorem \ref{thm:main_equitable_partition_profiles}).
For the forward $m$-neighbor model, the existence of each prescribed $q$-twisted profile is characterized by an exact indivisibility criterion (Corollary \ref{cor:main_forward_q_existence}).
On the stability side, strong connectivity and edgewise phase differences in $\left(-\pi/2,\pi/2\right)$ imply local orbital exponential stability in quotient diameter and yield an explicit positively invariant neighborhood contained in the local basin of attraction (Theorem \ref{thm:main_edgewise_stability}).
For arbitrary twisted indices in the forward model, the corresponding specialization gives a low-winding regime in which the twisted orbit exists and is locally orbitally exponentially stable, with explicit positively invariant neighborhoods (Corollary \ref{cor:main_low_winding_stability}).
For any existing $q$-twisted branch, a discrete Fourier transform (DFT) criterion shows that positivity of every nonzero Fourier-mode factor implies local orbital exponential stability, while a negative factor implies nonlinear instability in the quotient sense (Theorem \ref{thm:main_general_q_fourier}).
The instability proof also constructs explicit escaping real Fourier perturbations.
For arbitrary twisted indices, we obtain the explicit stable range $N\geq\frac{\pi}{2}\left(2m+1\right)q^\sharp$ and, for existing branches, the instability range $N\leq\left(8m+3\right)\gcd\left(N,q\right)/3$, where $q^\sharp:=\min\left\{q,N-q\right\}$ and $\gcd\left(N,q\right)$ denotes the greatest common divisor of $N$ and $q$ (Corollary \ref{cor:main_general_q_parameter_ranges}).
For the fundamental twisted branch, the Fourier criterion yields explicit stable and unstable subranges and reduces the remaining transition window to the first-mode factor, with its unique zero case resolved separately (Corollary \ref{cor:main_q1_classification}).
For the $2$-twisted branch, it yields a complete finite-size classification, with stability for $N\geq6m+3$ and nonlinear instability in the quotient sense for $N\leq6m+2$ (Corollary \ref{cor:main_q2_classification}).
The unique fully degenerate case $\left(N,m\right)=\left(8,1\right)$ is resolved by a four-periodic reduction to the four-oscillator nearest-neighbor dynamics.

\vspace{0.5em}

The rest of the paper is organized as follows.
Section \ref{sec:preliminaries} introduces the notation and graph conventions, presents a three-oscillator example, and defines locked profiles, dancing equilibria, and quotient stability.
Section \ref{sec:main_results} states the main results.
Section \ref{sec:existence} proves the necessary structural conditions and constructive existence criteria.
Section \ref{sec:stability} develops the nonlinear stability arguments and the DFT-based spectral analysis.
Section \ref{sec:conclusion} summarizes the results and discusses further directions.
Appendix \ref{sec:technical_lemmas} collects the technical lemmas, while Appendix \ref{sec:model_lemma_proofs} contains the deferred proofs of the model-specific lemmas.


\section{Preliminaries}\label{sec:preliminaries}
\setcounter{equation}{0}

This section first fixes the notation and graph conventions and presents a three-oscillator example illustrating how asymmetric coupling can generate a common nonzero angular velocity.
It then introduces locked profiles, dancing equilibria, quotient stability, and the twisted profiles used throughout the paper.

\subsection{Notation, graph conventions, and a three-oscillator example}

Although system \eqref{eq:ackm} is well defined for smaller networks, the main results below concern systems with at least three oscillators.
We therefore assume $N\geq3$ throughout the paper.
For each positive integer $n$, we define $[n]:=\left\{1,2,\dots,n\right\}$.
For a $q$-twisted profile on $[N]$, we take $q\in[N-1]$, while for a twisted class pattern on a cyclically indexed partition with $p$ classes, we take $q\in[p-1]$.
In the forward $m$-neighbor model, we assume $K>0$ and $m\in[N-1]$.
Any additional restriction on $m$ will be stated explicitly when it is needed.
For $s\in\mathbb Z$, let $\langle s\rangle_n$ denote the unique representative in $[n]$ of the congruence class of $s$ modulo $n$.
When $n=N$, we omit the subscript and write $\langle s\rangle$.
The imaginary unit is denoted by $\mathrm i$.
For $x,y\in\mathbb Z$, we write $x\mid y$ if $y=kx$ for some $k\in\mathbb Z$, and write $x\nmid y$ otherwise.
For positive integers $x$ and $y$, let $\gcd(x,y)$ denote their greatest common divisor.
For $x\in\mathbb R$, let $\lfloor x\rfloor$ and $\lceil x\rceil$ denote the greatest integer not exceeding $x$ and the least integer not smaller than $x$, respectively.

\vspace{0.5em}

The digraph associated with the coupling matrix $A$ is
\begin{equation*}
  \mathcal G(A):=\left([N],\mathcal E_A\right),\quad
  \mathcal E_A:=\left\{\left(j,i\right)\in[N]\times[N]:i\neq j,\quad a_{ij}>0\right\}.
\end{equation*}
Thus $j\to i$ is an associated edge precisely when oscillator $j$ influences oscillator $i$ with positive coupling strength.
Self-loops are omitted because the diagonal entries of $A$ do not affect system \eqref{eq:ackm}.
A directed path from $i_0$ to $i_k$ is a finite sequence of distinct vertices $i_0,i_1,\dots,i_k$, where $k\geq0$, such that $\left(i_{r-1},i_r\right)\in\mathcal E_A$ for $1\leq r\leq k$.
A directed cycle is a finite sequence of distinct vertices $i_0,i_1,\dots,i_{k-1}$, where $k\geq2$, such that $\left(i_r,i_{r+1}\right)\in\mathcal E_A$ for $0\leq r\leq k-2$ and $\left(i_{k-1},i_0\right)\in\mathcal E_A$.
The digraph is acyclic if it contains no directed cycle.
It is strongly connected if, for every ordered pair of vertices, there exists a directed path from the first vertex to the second.

\vspace{0.5em}

The following example illustrates how asymmetric coupling can produce the same nonzero coupling sum at every oscillator.

\begin{example}
  Consider the three-oscillator system
  \begin{equation*}
  \left\{
    \begin{aligned}
      &\dot{\theta}_1=a_{12}\sin\left(\theta_2-\theta_1\right)+a_{13}\sin\left(\theta_3-\theta_1\right),\\
      &\dot{\theta}_2=a_{21}\sin\left(\theta_1-\theta_2\right)+a_{23}\sin\left(\theta_3-\theta_2\right),\\
      &\dot{\theta}_3=a_{31}\sin\left(\theta_1-\theta_3\right)+a_{32}\sin\left(\theta_2-\theta_3\right).
    \end{aligned}
  \right.
  \end{equation*}
  Set $\phi:=\left(0,\frac{2\pi}{3},\frac{4\pi}{3}\right)$ and assume that
  \begin{equation*}
    a_{12}-a_{13}=-a_{21}+a_{23}=a_{31}-a_{32}=:\tilde{a}\neq0,\quad \tilde{\Omega}:=\frac{\sqrt{3}}{2}\tilde{a}.
  \end{equation*}
  Direct substitution and the assumed relation give
  \begin{equation*}
    \sum_{j=1}^{3}a_{ij}\sin\left(\phi_j-\phi_i\right)
    =\frac{\sqrt{3}}{2}\tilde{a}
    =\tilde{\Omega},\quad i\in[3].
  \end{equation*}
  Consequently, the trajectory $\theta_i(t)=\phi_i+\tilde{\Omega} t$, $i\in[3]$, solves the system with common angular velocity $\tilde{\Omega}\neq0$ and constant phase differences.
\end{example}

\subsection{Locked profiles, dancing equilibria, and quotient stability}

The preceding example motivates our study of phase-locked motions with nonzero common frequency.
The phase space of system \eqref{eq:ackm} is $\mathbb T^N$, while local arguments are carried out after choosing a lift to $\mathbb R^N$.
Let $\mathbf 1:=\left(1,\dots,1\right)\in\mathbb R^N$.
Since the vector field depends only on phase differences, if $\theta(t)$ is a solution, then $\theta(t)+\gamma\mathbf 1$ is also a solution for every constant $\gamma\in\mathbb R$.
Hence the phase-shift direction is spanned by $\mathbf 1$.
A phase-locked motion is a solution for which all pairwise phase differences remain constant.
The next definition distinguishes the rotating motions considered here from stationary equilibria by requiring a nonzero common angular velocity.

\begin{definition}[Dancing equilibrium]
  A phase-locked solution $\theta(t)$ of system \eqref{eq:ackm} is called a dancing equilibrium if there exists a constant $\Omega\neq0$ such that
  \begin{equation*}
    \dot{\theta}_i(t)=\Omega,\quad i\in[N].
  \end{equation*}
\end{definition}

\begin{definition}[Locked profile]
  A pair $\left(\phi,\Omega\right)\in\mathbb R^N\times\mathbb R$ is called a locked profile if
  \begin{equation}\label{eq:locked_profile_equations}
    \sum_{j=1}^{N}a_{ij}\sin\left(\phi_j-\phi_i\right)=\Omega,\quad i\in[N].
  \end{equation}
  The value $\Omega$ is the common angular velocity associated with the profile.
  The locked profile is stationary when $\Omega=0$ and nonstationary when $\Omega\neq0$.
\end{definition}

\begin{lemma}\label{lem:locked_profile_equivalence}
  System \eqref{eq:ackm} admits a dancing equilibrium if and only if it admits a nonstationary locked profile.
  More precisely, if $\left(\phi,\Omega\right)$ satisfies \eqref{eq:locked_profile_equations} with $\Omega\neq0$, then
  \begin{equation*}
    \theta_i(t)=\phi_i+\Omega t,\quad i\in[N],
  \end{equation*}
  is a dancing equilibrium.
\end{lemma}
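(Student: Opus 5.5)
The plan is to verify both implications directly from the definitions; no earlier result is needed beyond the translation invariance of \eqref{eq:ackm} under $\theta\mapsto\theta+\gamma\mathbf 1$.

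\emph{From a nonstationary locked profile to a dancing equilibrium.} Let $(\phi,\Omega)$ satisfy \eqref{eq:locked_profile_equations} with $\Omega\neq0$, and set $\theta_i(t)=\phi_i+\Omega t$. Then $\theta_j(t)-\theta_i(t)=\phi_j-\phi_i$ for all $t$, because the common term $\Omega t$ cancels. Substituting into the right-hand side of \eqref{eq:ackm} gives
\begin{equation*}
  \sum_{j=1}^{N}a_{ij}\sin\left(\theta_j(t)-\theta_i(t)\right)=\sum_{j=1}^{N}a_{ij}\sin\left(\phi_j-\phi_i\right)=\Omega=\dot\theta_i(t).
\end{equation*}
So $\theta$ is a solution. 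Its phase differences are constant, so it is phase-locked, and $\dot\theta_i\equiv\Omega\neq0$ for every $i$. By definition it is therefore a dancing equilibrium. This also proves the ``more precisely'' statement.

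\emph{From a dancing equilibrium to a nonstationary locked profile.} Let $\theta(t)$ be a dancing equilibrium with constant $\Omega\neq0$ and $\dot\theta_i(t)=\Omega$ for all $i$. Fix any time $t_0$ in the interval of definition and put $\phi:=\theta(t_0)$. Evaluating \eqref{eq:ackm} at $t_0$ gives
\begin{equation*}
  \sum_{j=1}^{N}a_{ij}\sin\left(\phi_j-\phi_i\right)=\dot\theta_i(t_0)=\Omega,\quad i\in[N],
\end{equation*}
which is exactly \eqref{eq:locked_profile_equations}. Hence $(\phi,\Omega)$ is a nonstationary locked profile.

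Alternatively, integrating $\dot\theta_i=\Omega$ gives $\theta_i(t)=\theta_i(t_0)+\Omega(t-t_0)$, so every dancing equilibrium has the form $\phi+\Omega t$ up to a time shift. This shows the correspondence is essentially bijective modulo time translation.

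There is no real obstacle here. The only points needing care are bookkeeping: work with a lift to $\mathbb R^N$ so that $\phi_i+\Omega t$ makes sense, and note that a solution on $\mathbb T^N$ is simply the projection of such a lift.
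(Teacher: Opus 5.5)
Your proposal is correct and follows essentially the same route as the paper. The forward direction is direct substitution with the $\Omega t$ terms cancelling in the phase differences. The converse evaluates the equation at a single time and takes the state there as $\phi$; the paper uses $t=0$, while you allow any $t_0$.
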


\begin{proof}
  If \eqref{eq:locked_profile_equations} holds with $\Omega\neq0$, direct substitution gives $\dot{\theta}_i(t)=\Omega$ for $i\in[N]$, and all phase differences are fixed.
  Hence the solution is a dancing equilibrium.
  Conversely, suppose that $\theta$ is a dancing equilibrium.
  By definition, there exists a constant $\Omega\neq0$ such that $\dot{\theta}_i(t)=\Omega$ for $i\in[N]$.
  Taking $\phi_i=\theta_i(0)$ for $i\in[N]$ and substituting $t=0$ into system \eqref{eq:ackm} yields \eqref{eq:locked_profile_equations}.
  Thus every dancing equilibrium determines a nonstationary locked profile after choosing real-valued phase representatives.
\end{proof}

A locked profile $\left(\phi,\Omega\right)$ determines the phase-locked motion
\begin{equation*}
  \theta(t)=\phi+\Omega t\mathbf 1.
\end{equation*}
Conversely, if all pairwise phase differences of a solution are constant, then each component of the vector field is constant and all phase derivatives are equal. Hence every phase-locked motion has this form.
It is a dancing equilibrium exactly when $\Omega\neq0$.

\begin{definition}[Dancing orbit]
  Let $\left(\phi,\Omega\right)$ be a nonstationary locked profile.
  The phase-shift orbit
  \begin{equation*}
    \mathcal O_\phi:=\left\{\phi+\gamma\mathbf 1:\gamma\in\mathbb R\right\}
  \end{equation*}
  is called the dancing orbit associated with $\left(\phi,\Omega\right)$.
\end{definition}

All elements of $\mathcal O_\phi$ differ only by a uniform phase shift.
Stability is therefore considered modulo $\operatorname{span}\left\{\mathbf 1\right\}$.
To measure perturbations on this quotient, define
\begin{equation*}
  D(u):=\max_i u_i-\min_i u_i.
\end{equation*}
Since $D\left(u+\gamma\mathbf 1\right)=D(u)$ and $D(u)=0$ if and only if $u\in\operatorname{span}\left\{\mathbf 1\right\}$,
the seminorm $D$ induces a norm on $\mathbb R^N/\operatorname{span}\left\{\mathbf 1\right\}$.

\begin{definition}[Quotient orbital exponential stability]\label{def:quotient_orbital_exponential_stability}
  Let $\left(\phi,\Omega\right)$ be a nonstationary locked profile.
  The orbit $\mathcal O_\phi$ is called locally orbitally exponentially stable in quotient diameter if there exist constants $\varepsilon>0$, $C\geq1$, and $\lambda>0$ such that,
  after choosing a local lift to $\mathbb R^N$, every solution $\theta(t)$ with
  \begin{equation*}
    D\left(\theta(0)-\phi\right)<\varepsilon
  \end{equation*}
  satisfies
  \begin{equation*}
    D\left(\theta(t)-\Omega t\mathbf 1-\phi\right)
    \leq C e^{-\lambda t}D\left(\theta(0)-\phi\right),\quad t\geq0.
  \end{equation*}
\end{definition}

We call $\mathcal O_\phi$ nonlinearly unstable in the quotient sense if there exists $\eta>0$ such that, for every sufficiently small $\varepsilon>0$, one can choose initial data in a local lift satisfying
\begin{equation*}
  D\left(\theta(0)-\phi\right)<\varepsilon
\end{equation*}
and a time $t_\varepsilon>0$ for which
\begin{equation*}
  D\left(\theta(t_\varepsilon)-\Omega t_\varepsilon\mathbf 1-\phi\right)\geq\eta.
\end{equation*}
The preceding definitions describe dancing equilibria and their quotient stability in general.
We next introduce a family of labeled phase profiles that will be used to construct and classify dancing equilibria in the forward model.
As recalled in the introduction, related $q$-twisted states are standard in the literature on symmetrically coupled oscillator rings and their basins of attraction.
Here the definition is tied to the fixed cyclic labeling and will be used for asymmetric nonzero-frequency locked motions.

\begin{definition}[$q$-twisted profile and $q$-twisted dancing equilibrium]\label{def:q_twisted_profile}
  For $q\in[N-1]$, define $\phi^{(q)}\in\mathbb R^N$ by
  \begin{equation}\label{eq:q_twisted_profile_definition}
    \phi_i^{(q)}:=\frac{2\pi q\left(i-1\right)}{N},\quad i\in[N].
  \end{equation}
  We call $\phi^{(q)}$ the $q$-twisted profile associated with the fixed cyclic labeling.
  This formula assigns phases according to the fixed vertex labels and is not a sorting convention.
  Its phase configuration on $\mathbb T^N$ is obtained componentwise modulo $2\pi$.
  If $\left(\phi^{(q)},\Omega\right)$ is a nonstationary locked profile, then
  \begin{equation*}
    \theta_i(t)=\phi_i^{(q)}+\Omega t,\quad i\in[N],
  \end{equation*}
  is called a $q$-twisted dancing equilibrium, and $\mathcal O_{\phi^{(q)}}$ is called the corresponding $q$-twisted dancing orbit.
\end{definition}


\section{Main results}\label{sec:main_results}
\setcounter{equation}{0}

For convenience, we first recall the two systems used throughout this section.
The general directed weighted Kuramoto system is
\begin{equation}\label{eq:ackm1}
  \dot{\theta}_i=\sum_{j=1}^{N}a_{ij}\sin\left(\theta_j-\theta_i\right),\quad i\in[N],
\end{equation}
and the forward $m$-neighbor model is
\begin{equation}\label{eq:forward_m_model1}
  \dot{\theta}_i=K\sum_{r=1}^{m}\sin\left(\theta_{\langle i+r\rangle}-\theta_i\right),\quad i\in[N].
\end{equation}
We now state the main results on the existence and stability of dancing equilibria.
For general directed networks, we give structural obstructions, a partition-based existence criterion, and a nonlinear orbital stability condition.
For the forward $m$-neighbor model, we derive an existence criterion and Fourier-based stability and instability results for prescribed twisted profiles.
Proofs of the existence and stability results are given in Section \ref{sec:existence} and Section \ref{sec:stability}, respectively.

\subsection{Existence results}

The existence results begin with structural obstructions to dancing equilibria.
We first show that symmetric coupling and acyclic associated digraphs exclude nonzero common angular velocities.
We then introduce a partition-based criterion for class-constant profiles generated by $q$-twisted state equitable partitions, with standard labeled $q$-twisted profiles recovered from singleton partitions.
Finally, for each prescribed $q$-twisted profile of the forward $m$-neighbor model, existence is characterized by an exact indivisibility criterion involving $N$, $m$, and $q$.

\vspace{0.5em}

The first theorem records the two necessary structural conditions.
Symmetry forces cancellation of the total coupling sum, while acyclicity forces at least one coupling sum to vanish.
Either property is incompatible with a common nonzero angular velocity.

\begin{theorem}\label{thm:main_necessary_obstructions}
  If system \eqref{eq:ackm1} admits a dancing equilibrium, then the coupling matrix $A$ is asymmetric and the associated digraph $\mathcal G(A)$ contains a directed cycle.
\end{theorem}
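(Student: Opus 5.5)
By Lemma \ref{lem:locked_profile_equivalence}, it suffices to work with a nonstationary locked profile $\left(\phi,\Omega\right)$, $\Omega\neq0$, satisfying \eqref{eq:locked_profile_equations}. The two assertions are then proved separately, each by contradiction: assuming symmetry, or assuming acyclicity, we will show that $\Omega=0$.

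\textbf{Symmetry.} Suppose $a_{ij}=a_{ji}$ for all $i,j$.
\begin{itemize}
\item[(i)] Sum \eqref{eq:locked_profile_equations} over $i\in[N]$ to get
\begin{equation*}
  N\Omega=\sum_{i,j=1}^{N}a_{ij}\sin\left(\phi_j-\phi_i\right).
\end{equation*}
\item[(ii)] Interchange the indices $i\leftrightarrow j$. Since $a_{ji}=a_{ij}$ and the sine is odd, the double sum equals its own negative, so it vanishes.
\item[(iii)] Hence $N\Omega=0$, which contradicts $\Omega\neq0$. So $A$ must be asymmetric.
\end{itemize}
The diagonal terms cause no trouble, since $\sin 0=0$.

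\textbf{Acyclicity.} Suppose $\mathcal G(A)$ has no directed cycle.
\begin{itemize}
\item[(i)] Every finite acyclic digraph has a sink, that is, a vertex with no outgoing edge. To see this, follow outgoing edges from any vertex. The vertices visited must be distinct, since a repeat would produce a directed cycle, so the walk terminates after at most $N$ steps. A $2$-cycle $i\to j\to i$ is also excluded, because the paper's definition of a directed cycle allows $k=2$.
\item[(ii)] Orientation matters here. An edge $(j,i)$ means $a_{ij}>0$, so row $i$ of $A$ involves the edges pointing \emph{into} $i$. We therefore want a \emph{source} $i_0$: a vertex with no incoming edge, i.e.\ $a_{i_0 j}=0$ for all $j\neq i_0$. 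A source exists by the same argument applied to the reversed digraph, which is also acyclic; equivalently, follow incoming edges backwards from any vertex.
\item[(iii)] The $i_0$-th equation in \eqref{eq:locked_profile_equations} then reads
\begin{equation*}
  \Omega=\sum_{j\neq i_0}a_{i_0 j}\sin\left(\phi_j-\phi_{i_0}\right)=0,
\end{equation*}
again a contradiction. So $\mathcal G(A)$ contains a directed cycle.
\end{itemize}

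Both parts are elementary. The only point needing care is the edge orientation: choosing the vertex with no incoming edge, rather than a sink, is what makes the row sum vanish.
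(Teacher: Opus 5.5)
Your proof is correct and is essentially the paper's argument: under symmetry, the summed locking equations vanish by the oddness of sine; if $\mathcal G(A)$ were acyclic, its locking equation at a vertex of indegree zero would force $\Omega=0$. Your care with edge orientation (a source rather than a sink, since row $i$ of $A$ records the edges into $i$) is exactly the point the paper uses when it takes a vertex of indegree zero.
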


\begin{proof}
  The proof is given in Section \ref{sec:structural_obstructions}.
\end{proof}

We next introduce a class-level analogue of the twisted pattern from Definition \ref{def:q_twisted_profile}.

\begin{definition}[Structural equitability and $q$-twisted state equitability]\label{def:structural_twisted_equitability}
  Let
  \begin{equation*}
    \mathcal C:=\left\{C_1,C_2,\dots,C_p\right\}
  \end{equation*}
  be a partition of $[N]$ into $p$ nonempty classes, where $p\geq2$.
  The partition $\mathcal C$ is called structurally equitable for $A$ if, for every $r,s\in[p]$ with $r\neq s$,
  \begin{equation*}
    \sum_{j\in C_s}a_{ij}=\sum_{j\in C_s}a_{i'j},\quad i,i'\in C_r.
  \end{equation*}
  Let $q\in[p-1]$.
  A structurally equitable partition $\mathcal C$ is called $q$-twisted state equitable if there exists $\Omega_q\in\mathbb R$ such that, for every $r\in[p]$ and $i\in C_r$,
  \begin{equation*}
    \sum_{d=1}^{p-1}\sum_{j\in C_{\langle r+d\rangle_p}}a_{ij}\sin\left(\frac{2\pi qd}{p}\right)=\Omega_q.
  \end{equation*}
  For a $q$-twisted state equitable partition, define the associated class-constant profile $\alpha^{(q)}\in\mathbb R^N$ by
  \begin{equation*}
    \alpha_i^{(q)}:=\frac{2\pi q\left(r-1\right)}{p},\quad r\in[p],\quad i\in C_r.
  \end{equation*}
\end{definition}

The structural equitability condition means that vertices in the same class receive the same total input from every other class.
The following theorem shows that the associated class-constant profile yields a dancing equilibrium whenever $\Omega_q\neq0$.

\begin{theorem}\label{thm:main_equitable_partition_profiles}
  Let $\mathcal C$ be $q$-twisted state equitable for $A$.
  If $\Omega_q\neq0$, then $\left(\alpha^{(q)},\Omega_q\right)$ is a nonstationary locked profile of system \eqref{eq:ackm1}.
  Consequently,
  \begin{equation*}
    \theta_i(t)=\alpha_i^{(q)}+\Omega_q t,\quad i\in[N],
  \end{equation*}
  is a dancing equilibrium.
\end{theorem}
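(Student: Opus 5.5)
The plan is to verify the locked-profile equations \eqref{eq:locked_profile_equations} directly for $\phi=\alpha^{(q)}$ and $\Omega=\Omega_q$, and then invoke Lemma \ref{lem:locked_profile_equivalence}. Fix $r\in[p]$ and $i\in C_r$. Since $\mathcal C$ partitions $[N]$, I will split the coupling sum at vertex $i$ according to the class of $j$:
\begin{equation*}
  \sum_{j=1}^{N}a_{ij}\sin\left(\alpha^{(q)}_j-\alpha^{(q)}_i\right)
  =\sum_{s=1}^{p}\sum_{j\in C_s}a_{ij}\sin\left(\alpha^{(q)}_j-\alpha^{(q)}_i\right).
\end{equation*}

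The first step is the term $s=r$. There $\alpha^{(q)}_j=\alpha^{(q)}_i$, so every summand contains $\sin 0=0$ and the whole term vanishes. This holds regardless of the diagonal entries or of the couplings inside $C_r$.

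The second step reindexes the remaining classes. Each $s\neq r$ is written uniquely as $s=\langle r+d\rangle_p$ with $d\in[p-1]$. For $j\in C_s$ we have $\alpha^{(q)}_j-\alpha^{(q)}_i=\frac{2\pi q(s-r)}{p}$. Since $s-r\equiv d \pmod p$ and $q$ is an integer, $s-r$ and $d$ differ by a multiple of $p$, so the two phase differences differ by an integer multiple of $2\pi$. Hence
\begin{equation*}
  \sin\left(\alpha^{(q)}_j-\alpha^{(q)}_i\right)=\sin\left(\frac{2\pi qd}{p}\right).
\end{equation*}
This wrap-around, where $r+d>p$, is the only point needing care. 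It reduces to the $2\pi$-periodicity of $\sin$ and is therefore a minor obstacle.

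Substituting these two facts gives
\begin{equation*}
  \sum_{j=1}^{N}a_{ij}\sin\left(\alpha^{(q)}_j-\alpha^{(q)}_i\right)
  =\sum_{d=1}^{p-1}\sum_{j\in C_{\langle r+d\rangle_p}}a_{ij}\sin\left(\frac{2\pi qd}{p}\right),
\end{equation*}
which equals $\Omega_q$ by the $q$-twisted state equitability hypothesis in Definition \ref{def:structural_twisted_equitability}. Structural equitability is not needed beyond the fact that it is part of that definition.

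Since $r$ and $i\in C_r$ were arbitrary, $\left(\alpha^{(q)},\Omega_q\right)$ satisfies \eqref{eq:locked_profile_equations}. Because $\Omega_q\neq0$, the profile is nonstationary. Lemma \ref{lem:locked_profile_equivalence} then shows that $\theta_i(t)=\alpha^{(q)}_i+\Omega_q t$ is a dancing equilibrium.
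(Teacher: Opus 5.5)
Your proposal is correct and follows essentially the same route as the paper's proof. The paper likewise kills the within-class contribution via $\sin 0=0$, identifies $\alpha^{(q)}_j-\alpha^{(q)}_i\equiv 2\pi qd/p \pmod{2\pi}$ for $j\in C_{\langle r+d\rangle_p}$, invokes the $q$-twisted state equitability condition to obtain $\Omega_q$, and concludes with Lemma \ref{lem:locked_profile_equivalence}.
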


\begin{proof}
  The proof is given in Section \ref{sec:equitable_partition_profiles}.
\end{proof}

\begin{remark}
  {\rm
  The structural and $q$-twisted state equitability conditions play different roles. Structural equitability requires vertices in the same class to receive the same total input from every other class,
  whereas $q$-twisted state equitability additionally requires the coupling sums generated by the prescribed twisted class phases to agree across all classes.
  For example, let $p=3$, and suppose that every vertex in $C_r$ receives total weight $b_1$ from $C_{\langle r+1\rangle_3}$ and total weight $b_2$ from $C_{\langle r+2\rangle_3}$.
  Then the partition is structurally equitable, and the class phases $0$, $\frac{2\pi q}{3}$, and $\frac{4\pi q}{3}$ generate the common coupling sum
  \begin{equation*}
    \Omega_q=b_1\sin\left(\frac{2\pi q}{3}\right)+b_2\sin\left(\frac{4\pi q}{3}\right),\quad q\in[2].
  \end{equation*}
  In particular, $\Omega_1=\frac{\sqrt{3}}{2}\left(b_1-b_2\right)$, so unequal input weights in the two cyclic directions produce a nonzero common angular velocity, whereas $b_1=b_2$ gives a stationary class-constant profile.
  }
\end{remark}

For the singleton partition $C_r=\left\{r\right\}$, $r\in[N]$, structural equitability is automatic.
This partition is $q$-twisted state equitable precisely when there exists a common value $\Omega_q$ such that
\begin{equation*}
  \sum_{d=1}^{N-1}a_{i,\langle i+d\rangle}\sin\left(\frac{2\pi qd}{N}\right)=\Omega_q,\quad i\in[N].
\end{equation*}
In this case, the associated class-constant profile $\alpha^{(q)}$ is exactly the standard labeled $q$-twisted profile $\phi^{(q)}$ from Definition \ref{def:q_twisted_profile}.
The three-oscillator example in Section \ref{sec:preliminaries} is the case $N=p=3$ and $q=1$ of Theorem \ref{thm:main_equitable_partition_profiles}.
We now specialize this singleton case to the forward $m$-neighbor model.
For $q\in[N-1]$, set
\begin{equation*}
  \Omega_{N,m,q}:=K\sum_{r=1}^{m}\sin\left(\frac{2\pi q r}{N}\right),
\end{equation*}
which is the common coupling sum generated by $\phi^{(q)}$.

\begin{corollary}\label{cor:main_forward_q_existence}
  Consider the forward $m$-neighbor model \eqref{eq:forward_m_model1} and let $q\in[N-1]$.
  The pair $\left(\phi^{(q)},\Omega_{N,m,q}\right)$ is a nonstationary locked profile if and only if
  \begin{equation*}
    N\nmid mq \quad\text{and}\quad N\nmid\left(m+1\right)q.
  \end{equation*}
\end{corollary}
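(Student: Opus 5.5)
First I check that $\phi^{(q)}$ is always a locked profile of \eqref{eq:forward_m_model1} with common value $\Omega_{N,m,q}$. Everything then reduces to deciding exactly when $\Omega_{N,m,q}\neq0$.

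\textbf{Step 1: the locking equations.} For every $i$, the phase difference along the $r$-th forward edge is
\begin{equation*}
  \phi^{(q)}_{\langle i+r\rangle}-\phi^{(q)}_i\equiv \frac{2\pi qr}{N}\pmod{2\pi},
\end{equation*}
because wrapping the index around changes the phase by a multiple of $2\pi q$. Hence the coupling sum at every vertex equals $\Omega_{N,m,q}$. Alternatively, this is the singleton-partition case of Theorem \ref{thm:main_equitable_partition_profiles}, with $a_{i,\langle i+d\rangle}=K$ for $d\le m$ and $0$ otherwise. So $\left(\phi^{(q)},\Omega_{N,m,q}\right)$ is a locked profile, and it is nonstationary if and only if
\begin{equation*}
  S:=\sum_{r=1}^m\sin(rx)\neq0,\qquad x:=\frac{2\pi q}{N}.
\end{equation*}

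\textbf{Step 2: closed form for $S$.} If $x\in2\pi\mathbb Z$, then $S=0$. This happens exactly when $N\mid q$, which is excluded since $q\in[N-1]$. So $\sin(x/2)\neq0$, and the standard product-to-sum telescoping gives
\begin{equation*}
  S=\frac{\sin\left(\frac{mx}{2}\right)\sin\left(\frac{(m+1)x}{2}\right)}{\sin\left(\frac{x}{2}\right)}.
\end{equation*}

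\textbf{Step 3: the vanishing condition.} Since $\frac{mx}{2}=\frac{\pi mq}{N}$, we have $\sin\left(\frac{mx}{2}\right)=0$ if and only if $\frac{mq}{N}\in\mathbb Z$, that is, $N\mid mq$. In the same way, $\sin\left(\frac{(m+1)x}{2}\right)=0$ if and only if $N\mid(m+1)q$. Therefore $S\neq0$ if and only if both divisibilities fail, which is exactly the stated condition.

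There is no real obstacle here. The only points that need care are that the derivation of the closed form requires $\sin(x/2)\neq0$, which is guaranteed by $1\le q\le N-1$, and that the modular index wrap in Step 1 does not affect the sine terms.
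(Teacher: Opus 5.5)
Your proposal is correct and follows essentially the same route as the paper: you check that the forward edgewise phase differences are $2\pi qr/N$ modulo $2\pi$, so the locking equations hold with $\Omega_{N,m,q}$, and then you apply the closed-form finite sine sum with $x=2\pi q/N$, whose denominator is nonzero because $q\in[N-1]$. From that closed form you read off that $\Omega_{N,m,q}=0$ exactly when $N\mid mq$ or $N\mid(m+1)q$.
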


\begin{proof}
  The proof is given in Section \ref{sec:forward_existence}.
\end{proof}

\begin{remark}
  {\rm
  The angular velocity $\Omega_{N,m,q}$ is determined by the prescribed profile and the coupling pattern and is not an independent parameter.
  Because the finite sine sum can vanish for particular values of $m$ and $q$, existence is not monotone in the number $m$ of forward neighbors.
  At the endpoint $m=N-1$, the coupling is complete, equal-strength, and symmetric. Hence no dancing equilibrium exists.
  For the $1$-twisted branch, the criterion reduces to $m\in[N-2]$, and
  \begin{equation*}
    \Omega_{N,m,1}=K\frac{\sin\left(\frac{m\pi}{N}\right)\sin\left(\frac{\left(m+1\right)\pi}{N}\right)}{\sin\left(\frac{\pi}{N}\right)}.
  \end{equation*}
  When $m=1$, the model reduces to the nearest-neighbor model, and
  \begin{equation*}
    \Omega_{N,1,q}=K\sin\left(\frac{2\pi q}{N}\right).
  \end{equation*}
  }
\end{remark}

\subsection{Stability results}

The stability results combine a nonlinear argument based on phase differences along associated edges with an exact Fourier-mode analysis.
For general directed weighted networks, the nonlinear argument yields orbital exponential stability and an explicit positively invariant neighborhood.
For the forward $m$-neighbor model, this edgewise criterion first gives a low-winding stability regime with an explicit positively invariant neighborhood.
The Fourier analysis then gives an exact formula for the quotient spectrum and corresponding stability and instability criteria for prescribed twisted branches.
Its instability proof constructs explicit escaping perturbations along unstable real Fourier modes.
An additional corollary converts these criteria into explicit stable and unstable parameter ranges for arbitrary twisted indices.
We then specialize the Fourier analysis to the first two twisted branches.

\vspace{0.5em}

We begin with the result for a general directed weighted network.
Let $\left(\phi,\Omega\right)$ be a nonstationary locked profile.
For every $\left(j,i\right)\in\mathcal E_A$, let $d_{ij}\in\left(-\pi,\pi\right]$ be the unique representative of $(\phi_j-\phi_i)$ modulo $2\pi$.
These quantities describe phase differences along the associated edges and will be called the edgewise phase differences of the profile.
The relevant condition is $\left|d_{ij}\right|<\pi/2$ for every $\left(j,i\right)\in\mathcal E_A$.
When $\mathcal E_A\neq\varnothing$, define the corresponding margin by
\begin{equation*}
  \rho_\phi(A):=\min_{\left(j,i\right)\in\mathcal E_A}\left(\frac{\pi}{2}-\left|d_{ij}\right|\right).
\end{equation*}
For $0<\rho<\rho_\phi(A)$, define
\begin{equation*}
  \mathcal U_{\phi,\rho}:=\left\{\phi+\sigma\mathbf 1+u:\sigma\in\mathbb R,\quad u\in\mathbb R^N,\quad D(u)<\rho\right\}.
\end{equation*}

\begin{theorem}\label{thm:main_edgewise_stability}
  Let $\left(\phi,\Omega\right)$ be a nonstationary locked profile for system \eqref{eq:ackm1}.
  Assume that $\mathcal G(A)$ is strongly connected and $\rho_\phi(A)>0$.
  Then the dancing orbit $\mathcal O_\phi$ is locally orbitally exponentially stable in quotient diameter.
  More precisely, for every $0<\rho<\rho_\phi(A)$, the set $\mathcal U_{\phi,\rho}$ is positively invariant in a local rotating lift and is contained in the local basin of attraction of $\mathcal O_\phi$.
  Moreover, there exist constants $C_\rho\geq1$ and $\lambda_\rho>0$ such that every solution $\theta(t)$ with $\theta(0)\in\mathcal U_{\phi,\rho}$ satisfies
  \begin{equation*}
    D\left(\theta(t)-\Omega t\mathbf 1-\phi\right)\leq C_\rho e^{-\lambda_\rho t}D\left(\theta(0)-\phi\right),\quad t\geq0.
  \end{equation*}
\end{theorem}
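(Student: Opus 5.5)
We pass to the rotating frame and work with the perturbation $u(t):=\theta(t)-\Omega t\mathbf 1-\phi$ in a local lift. Since $\sum_j a_{ij}\sin(\phi_j-\phi_i)=\Omega$, the perturbation satisfies
\begin{equation*}
  \dot u_i=\sum_{j=1}^{N}a_{ij}\left[\sin\left(d_{ij}+u_j-u_i\right)-\sin\left(d_{ij}\right)\right],\quad i\in[N],
\end{equation*}
where we may replace $\phi_j-\phi_i$ by $d_{ij}$ because the sine is $2\pi$-periodic. By the mean value theorem, each bracket equals $c_{ij}(t)\left(u_j-u_i\right)$ with $c_{ij}(t)=\cos\xi_{ij}(t)$, and $\xi_{ij}$ lies between $d_{ij}$ and $d_{ij}+u_j-u_i$. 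As long as $D(u(t))<\rho<\rho_\phi(A)$, we have $\left|\xi_{ij}\right|\leq\left|d_{ij}\right|+\rho<\pi/2-(\rho_\phi(A)-\rho)$. Hence $c_{ij}\geq c_\rho:=\sin\left(\rho_\phi(A)-\rho\right)>0$ on every edge. The perturbation therefore obeys a linear consensus system $\dot u_i=\sum_j b_{ij}(t)(u_j-u_i)$ with time-varying weights $b_{ij}=a_{ij}c_{ij}\in[a_{ij}c_\rho,a_{ij}]$. These weights have the same digraph $\mathcal G(A)$ and uniformly positive lower bounds.

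\textbf{Positive invariance.} At any time, the maximal component $u_i=\max u$ has $\dot u_i\leq0$, and the minimal one has $\dot u_k\geq0$, because all weights are nonnegative. Using the upper Dini derivative, $D(u(t))$ is nonincreasing while $D<\rho$. A continuity argument then shows $D(u(t))\leq D(u(0))<\rho$ for all $t\geq0$. The shift $\sigma\mathbf 1$ in $\mathcal U_{\phi,\rho}$ is harmless because $D$ is shift invariant and the dynamics commute with shifts. So $\mathcal U_{\phi,\rho}$ is positively invariant in the rotating lift. Once this holds, the bounds on $b_{ij}$ hold globally in time.

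\textbf{Exponential contraction.} This is the main obstacle: strong connectivity must be turned into a quantitative decay of $D$. I would use the standard consensus argument over a window of length $T$. Let $M(t)=\max u$, $m(t)=\min u$, and let $\ell$ be the largest directed path length, at most $N-1$. Pick a vertex $i_0$ with $u_{i_0}(0)\leq (M(0)+m(0))/2$. The component bound $\dot u_i\leq -b_{ij}(u_i-u_j)+\sum_{k\neq j} b_{ik}(M-u_i)$ gives $u_i(t)\leq M-e^{-\bar a t}(M-u_i(s))$ for $t\geq s$, with $\bar a:=\max_i\sum_j a_{ij}$. Any vertex $j$ that is not near the top pulls its out-neighbours $i$ (edge $j\to i$) down by a fixed fraction: $M-u_i(t)\geq \kappa\,(M-u_j(s))$ with an explicit $\kappa=\kappa(a_{\min}c_\rho,\bar a,t-s)>0$. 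Strong connectivity gives a directed path from $i_0$ to every vertex. Propagating along these paths over $N-1$ unit intervals yields $M(N-1)\leq M(0)-\delta D(u(0))$ for some $\delta\in(0,1)$ depending only on $N$, $a_{\min}:=\min_{\mathcal E_A}a_{ij}$, $\bar a$, and $c_\rho$.

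Because $m$ is nondecreasing, this gives $D(u(T))\leq(1-\delta)D(u(0))$ with $T=N-1$. By time-homogeneity of the estimate (the bounds are uniform on $\mathcal U_{\phi,\rho}$), we obtain $D(u(kT))\leq(1-\delta)^kD(u(0))$. Together with the monotonicity of $D$, this yields $D(u(t))\leq C_\rho e^{-\lambda_\rho t}D(u(0))$ with $\lambda_\rho=-\ln(1-\delta)/T$ and $C_\rho=(1-\delta)^{-1}$. Choosing $\varepsilon=\rho$ gives local orbital exponential stability in the sense of Definition~\ref{def:quotient_orbital_exponential_stability}. Note that the decay of $D$ controls the quotient, so no asymptotic phase needs to be identified for the stated estimate.
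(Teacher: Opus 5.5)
Your proof is correct and has the same architecture as the paper's. Both write the rotating-frame perturbation as a cooperative consensus system via the mean value theorem, and both get positive invariance of $\{D(u)<\rho\}$ from the maximum principle plus a continuation argument. Both then derive exponential contraction of $D$ from uniform two-sided edge-weight bounds on the strongly connected digraph $\mathcal G(A)$. Your lower bound $a_{ij}\sin(\rho_\phi(A)-\rho)$ is just an edge-uniform weakening of the paper's $b_\ast(\rho)=\min a_{ij}\cos(|d_{ij}|+\rho)$.

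The real difference is the contraction step. The paper does not prove it: it verifies uniform quasi-strong $\delta$-connectivity and integral boundedness, then invokes the Shi--Johansson exponential-consensus result (Lemma \ref{lem:uniform_consensus_contraction}). You instead prove it directly by the classical pull-down argument. Along each edge $j\to i$, the gap $w_i=M(s)-u_i$ satisfies $\dot w_i\geq a_{\min}c_\rho w_j-\bar a w_i$, so a low vertex drags its out-neighbours down. This propagates along directed paths from a low vertex and gives $D(u(kT))\leq(1-\delta)^kD(u(0))$ with $T=N-1$.

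Your route is self-contained and needs no continuity of the weights, since the differential inequalities hold pointwise from the vector field. It also yields explicit, if crude, constants $\delta$, $C_\rho$, $\lambda_\rho$. The paper's route is shorter and outsources the combinatorics.

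Finally, the paper also shows that $\max_i u_i$ and $\min_i u_i$ converge to a common $\sigma_\infty$, so that $\theta(t)-\Omega t\mathbf 1\to\phi+\sigma_\infty\mathbf 1$, which gives an asymptotic phase. As you say, this is not needed for the stated estimate or for convergence to $\mathcal O_\phi$ as a set. It follows immediately from your monotonicity of the extremal components together with $D(u(t))\to0$.
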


\begin{proof}
  The proof is given in Section \ref{sec:edgewise_low_winding}.
\end{proof}

For the forward $m$-neighbor model, the edgewise phase-difference condition gives a low-winding stability regime with an explicit positively invariant neighborhood.
Introduce
\begin{equation*}
  q^\sharp:=\min\left\{q,N-q\right\}, \quad q\in[N-1].
\end{equation*}

\begin{corollary}\label{cor:main_low_winding_stability}
  Consider the $q$-twisted profile $\phi^{(q)}$ defined in \eqref{eq:q_twisted_profile_definition} for the forward $m$-neighbor model \eqref{eq:forward_m_model1}.
  If
  \begin{equation}\label{eq:q_twisted_low_winding_condition}
    4mq^\sharp<N,
  \end{equation}
  then the corresponding $q$-twisted dancing orbit exists and is locally orbitally exponentially stable.
  Moreover, for every $0<\rho<\rho_{\phi^{(q)}}(A)$, the set $\mathcal U_{\phi^{(q)},\rho}$ is positively invariant in a local rotating lift
  and is contained in the local basin of attraction of this orbit, where $\rho_{\phi^{(q)}}(A)=\frac{\pi}{2}-\frac{2\pi m q^\sharp}{N}>0$.
\end{corollary}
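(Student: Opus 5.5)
The plan is to reduce the corollary to Corollary \ref{cor:main_forward_q_existence} for existence and to Theorem \ref{thm:main_edgewise_stability} for stability. This means checking three things for the forward model: the indivisibility criterion, strong connectivity of $\mathcal G(A)$, and the value of the edgewise margin $\rho_{\phi^{(q)}}(A)$.

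\emph{Existence.} By symmetry of $\sin(2\pi q r/N)$ under $q\mapsto N-q$ (which only flips the sign of $\Omega_{N,m,q}$), it suffices to note that the divisibility conditions are also invariant under $q\mapsto N-q$. Indeed, $N\mid mq$ iff $N\mid m(N-q)$. So we may work with $q^\sharp$.

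Suppose $N\mid mq$. Then $N\mid mq^\sharp$, and $mq^\sharp\ge1$, so $N\le mq^\sharp<N/4$, which is a contradiction. Likewise, $N\mid(m+1)q^\sharp$ would force $N\le (m+1)q^\sharp\le 2mq^\sharp<N/2$, again a contradiction. Hence Corollary \ref{cor:main_forward_q_existence} gives that $(\phi^{(q)},\Omega_{N,m,q})$ is a nonstationary locked profile. Alternatively, one can see this directly: every angle $2\pi q^\sharp r/N$ with $r\in[m]$ lies in $(0,\pi/2)$, so each sine is positive and $\Omega_{N,m,q}\neq0$.

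\emph{Graph.} The edges are $\langle i+r\rangle\to i$ for $r\in[m]$. In particular, the edges $\langle i+1\rangle\to i$ alone form a directed Hamiltonian cycle, so $\mathcal G(A)$ is strongly connected. Note also that $m\le N-1$ and $4m<N$ imply that no two distinct $r$ produce the same edge.

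\emph{Margin.} For the edge $(\langle i+r\rangle,i)$, the phase difference $\phi^{(q)}_{\langle i+r\rangle}-\phi^{(q)}_i$ is congruent to $2\pi qr/N$ modulo $2\pi$. The wrap-around at $i+r>N$ subtracts $2\pi q$, which does not change the residue. Also $2\pi qr/N\equiv -2\pi (N-q)r/N$, so
\[
|d_{i,\langle i+r\rangle}|=\frac{2\pi q^\sharp r}{N}.
\]
This holds because $2\pi q^\sharp r/N\le 2\pi mq^\sharp/N<\pi/2<\pi$, so the quantity is already in the principal range $(-\pi,\pi]$ with the appropriate sign. The maximum over edges is therefore attained at $r=m$. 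This gives
\[
\rho_{\phi^{(q)}}(A)=\frac{\pi}{2}-\frac{2\pi mq^\sharp}{N},
\]
which is positive exactly by \eqref{eq:q_twisted_low_winding_condition}.

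Applying Theorem \ref{thm:main_edgewise_stability} then yields the local orbital exponential stability, the positive invariance of $\mathcal U_{\phi^{(q)},\rho}$, and its containment in the basin. The only delicate point is the correct identification of the representative $d_{ij}$ in $(-\pi,\pi]$ when $q>N/2$. This is handled by the $q^\sharp$ reduction above, and I expect no real obstacle.
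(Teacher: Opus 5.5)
Your proposal is correct and follows essentially the paper's argument. You verify existence, obtain strong connectivity from the one-step cycle, compute the edgewise margin through the $q^\sharp$ reduction, and then invoke Theorem \ref{thm:main_edgewise_stability}. The only cosmetic differences are these: you derive existence primarily from the indivisibility criterion, whereas the paper uses your ``alternative'' observation that all sines share a strict sign, and the paper routes the conclusion through the singleton-partition corollary rather than applying the edgewise theorem directly.
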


\begin{proof}
  The proof is given in Section \ref{sec:edgewise_low_winding}.
\end{proof}

We next use the circulant structure of the forward $m$-neighbor model to obtain an exact formula for the quotient spectrum and corresponding Fourier-mode criteria.
The edgewise phase-difference condition in Theorem \ref{thm:main_edgewise_stability} is sufficient but not necessary, whereas the circulant linearization of the forward model can be diagonalized explicitly by the DFT.
For each $\ell\in[N-1]$, define the corresponding mode factor
\begin{equation}\label{eq:gamma_general_q_definition}
  \Gamma_{N,m,q}^{(\ell)}:=\sum_{r=1}^{m}\cos\left(\frac{2\pi q r}{N}\right)\left(1-\cos\left(\frac{2\pi\ell r}{N}\right)\right), \quad \ell\in[N-1].
\end{equation}

\begin{theorem}\label{thm:main_general_q_fourier}
  Consider the $q$-twisted profile of the forward $m$-neighbor model \eqref{eq:forward_m_model1} and suppose that
  \begin{equation*}
    N\nmid mq\quad\text{and}\quad N\nmid\left(m+1\right)q.
  \end{equation*}
  Then the following conclusions hold.
  \begin{enumerate}
    \item If
    \begin{equation*}
      \Gamma_{N,m,q}^{(\ell)}>0,\quad \forall\ell\in[N-1],
    \end{equation*}
    then the corresponding $q$-twisted dancing orbit is locally orbitally exponentially stable.

    \item If there exists $\ell\in[N-1]$ such that
    \begin{equation*}
      \Gamma_{N,m,q}^{(\ell)}<0,
    \end{equation*}
    then the corresponding $q$-twisted dancing orbit is nonlinearly unstable in the quotient sense.
  \end{enumerate}
\end{theorem}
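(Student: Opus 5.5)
The plan is to work in the rotating frame and linearize along the orbit. Write $\theta(t)=\phi^{(q)}+\Omega_{N,m,q}t\mathbf 1+u(t)$. Then $u$ solves
\begin{equation*}
  \dot u_i=K\sum_{r=1}^{m}\left[\sin\left(\frac{2\pi qr}{N}+u_{\langle i+r\rangle}-u_i\right)-\sin\left(\frac{2\pi qr}{N}\right)\right]=:F_i(u).
\end{equation*}
This system is autonomous, invariant under $u\mapsto u+\gamma\mathbf 1$, equivariant under the cyclic shift, and satisfies $F(0)=0$. Its Jacobian at $0$ is the circulant matrix
\begin{equation*}
  L=K\sum_{r=1}^m c_r\left(S^r-I\right),\qquad c_r:=\cos\left(\frac{2\pi qr}{N}\right),
\end{equation*}
where $S$ is the cyclic shift. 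The Fourier vectors $v^{(\ell)}_j=e^{2\pi\mathrm i\ell(j-1)/N}$ diagonalize $L$, with eigenvalues
\begin{equation*}
  \lambda_\ell=K\sum_{r=1}^m c_r\left(e^{2\pi\mathrm i\ell r/N}-1\right).
\end{equation*}
Hence $\operatorname{Re}\lambda_\ell=-K\Gamma^{(\ell)}_{N,m,q}$. The mode $\ell=0$ is the phase-shift direction $\mathbf 1$ with $\lambda_0=0$. The existence hypothesis is only used through Corollary \ref{cor:main_forward_q_existence}, to guarantee $\Omega_{N,m,q}\neq0$ so that the orbit is a dancing orbit.

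For part (1), I would pass to the quotient $\mathbb R^N/\operatorname{span}\{\mathbf 1\}$. Concretely, I would restrict to the invariant subspace $W=\{u:\sum_i u_i=0\}$. Since $F$ is shift invariant, the reduced flow $w\mapsto P F(w)$ with $P$ the projection along $\mathbf 1$ is well defined. All eigenvalues of the reduced linearization $PL|_W$ are $\lambda_\ell$ with $\ell\in[N-1]$, and these have real part at most $-K\min_\ell\Gamma^{(\ell)}<0$. The standard Lyapunov/linearization theorem for $C^2$ vector fields (Perron's theorem) then gives exponential decay of $Pu(t)$ in any norm. The quotient diameter $D$ is equivalent to the norm on $W$, since $D(u)=D(Pu)$. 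This yields constants $\varepsilon,C,\lambda$ as in Definition \ref{def:quotient_orbital_exponential_stability}. The common-shift component $\frac1N\sum_i u_i$ need not be tracked, because $D$ ignores it.

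For part (2), I would pick $\ell$ with $\Gamma^{(\ell)}<0$, so that $\operatorname{Re}\lambda_\ell>0$. The reduced system on $W$ then has an eigenvalue with positive real part. Instability follows from the nonlinear instability theorem for equilibria of $C^2$ systems with an eigenvalue of positive real part (Lyapunov's instability theorem / Henry's principle). This gives $\eta>0$ and, for each $\varepsilon$, data on $W$ with $D<\varepsilon$ that leave the $D$-ball of radius $\eta$.

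To obtain explicit escaping real perturbations, as the paper advertises, I would instead argue directly. Take
\begin{equation*}
  u(0)=\delta\left(\cos\frac{2\pi\ell(j-1)}{N}\right)_j.
\end{equation*}
The linear part rotates and grows this perturbation within the real two-dimensional span of $\operatorname{Re}v^{(\ell)},\operatorname{Im}v^{(\ell)}$, which is invariant because $\Gamma^{(\ell)}=\Gamma^{(N-\ell)}$. I would then use a Lyapunov-type functional
\begin{equation*}
  V(u)=\left|\langle u,v^{(\ell)}\rangle\right|^2-M\left\|P_{\perp}u\right\|^2
\end{equation*}
on the complementary unstable/center/stable splitting. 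The quadratic remainder $|F(u)-Lu|\le C|u|^2$ controls the nonlinear terms while $u$ stays small, and shows $V$ grows exponentially until $D(u)$ reaches a fixed $\eta$.

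The main obstacle is this instability step when other modes are neutral ($\Gamma^{(\ell')}=0$) or also unstable, because of the resulting eigenvalue multiplicity. The cleanest route there is to invoke the general nonlinear instability theorem on the reduced system rather than a hand-built cone argument. I would first try the cone argument restricted to the dominant unstable modes, projecting onto modes with maximal $\operatorname{Re}\lambda$. That choice makes the complement decay relative to the chosen mode, so neutral modes cause no trouble.
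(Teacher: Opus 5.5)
Your argument for parts (1) and (2) is correct and is essentially the paper's: rotating frame, DFT diagonalization of the circulant linearization with $\operatorname{Re}\lambda_\ell=-K\Gamma^{(\ell)}_{N,m,q}$, reduction to the quotient through the sum-zero slice with the projected field $PF$, and then the standard linearized stability and instability principle. The sum-zero slice is invariant for the linearization but not for the directed nonlinear flow; this is harmless because you only use the projected equation, exactly as in Lemma~\ref{lem:quotient_linearized_criterion}.

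Your optional explicit-escape sketch, however, does not work as written. A cone built on the single mode $\ell$ against its complement fails whenever another mode has a larger value of $-\Gamma^{(\ell')}_{N,m,q}$, and your dominant-mode fallback only covers the most unstable mode. The paper gets escape for every $\ell_\ast$ with $\Gamma^{(\ell_\ast)}_{N,m,q}<0$: it notes that $\varepsilon\widehat w_{\ell_\ast}$ lies in the full unstable subspace, hence off a local center-stable manifold. Equivalently, your cone should separate all of $E^{\mathrm u}$ from $E^{\mathrm{cs}}$.
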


\begin{proof}
  The proof is given in Section \ref{sec:fourier_criterion}.
\end{proof}

\begin{remark}
  {\rm
  Furthermore, if $\Gamma_{N,m,q}^{(\ell_\ast)}<0$ for some $\ell_\ast\in[N-1]$, then the proof yields explicit escaping real Fourier perturbations.
  More precisely, there exist constants $\eta>0$ and $\varepsilon_0>0$ such that, for every $0<\left|\varepsilon\right|<\varepsilon_0$, the solution with initial data
  \begin{equation*}
    \theta_i(0)=\phi_i^{(q)}+\varepsilon\cos\left(\frac{2\pi\ell_\ast\left(i-1\right)}{N}\right),\quad i\in[N],
  \end{equation*}
  satisfies
  \begin{equation*}
    D\left(\theta(t_\varepsilon)-\Omega_{N,m,q}t_\varepsilon\mathbf 1-\phi^{(q)}\right)\geq\eta \quad  \text{for some } t_\varepsilon>0.
  \end{equation*}

  If all nonzero-mode factors are nonnegative and at least one vanishes, then the quotient linearization is nonhyperbolic and the theorem gives no stability conclusion.
  This alternative already occurs in the nearest-neighbor model.
  For $N=8$, $m=1$, and $q=2$, the existence conditions hold, but $\cos\left(2\pi q/N\right)=0$ and hence $\Gamma_{8,1,2}^{(\ell)}=0$ for every $\ell\in[7]$.
  Thus the entire quotient linearization is nonhyperbolic.
  This case is resolved by Lemma \ref{lem:q2_degenerate_instability} and is included in Corollary \ref{cor:main_q2_classification}.
  }
\end{remark}

The next corollary converts the Fourier criterion into explicit parameter ranges that apply to arbitrary twisted indices.

\begin{corollary}\label{cor:main_general_q_parameter_ranges}
  Consider the $q$-twisted profile of the forward $m$-neighbor model \eqref{eq:forward_m_model1}.
  Then the following conclusions hold.
  \begin{enumerate}
    \item If
    \begin{equation*}
      N\geq\frac{\pi}{2}\left(2m+1\right)q^\sharp,
    \end{equation*}
    then the corresponding $q$-twisted dancing orbit is locally orbitally exponentially stable.

    \item Suppose that $N\nmid mq$ and $N\nmid\left(m+1\right)q$.
    If
    \begin{equation*}
      N\leq\frac{8m+3}{3}\gcd(N,q),
    \end{equation*}
    then the corresponding $q$-twisted dancing orbit is nonlinearly unstable in the quotient sense.
  \end{enumerate}
\end{corollary}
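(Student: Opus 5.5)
For part (1), the plan is to check the two hypotheses of Theorem \ref{thm:main_general_q_fourier}: that the $q$-twisted profile exists, and that $\Gamma_{N,m,q}^{(\ell)}>0$ for every $\ell\in[N-1]$.

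\emph{Reductions.} Since cosine is even, $\Gamma_{N,m,q}^{(\ell)}=\Gamma_{N,m,N-q}^{(\ell)}=\Gamma_{N,m,q}^{(N-\ell)}$. We may therefore replace $q$ by $q^\sharp$ throughout. Put $a:=2\pi q^\sharp/N$.

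\emph{Existence.} The hypothesis gives $N\geq\frac{\pi}{2}(2m+1)q^\sharp>(m+1)q^\sharp$. Hence $0<mq^\sharp<(m+1)q^\sharp<N$, so $N\nmid mq$ and $N\nmid(m+1)q$. Corollary \ref{cor:main_forward_q_existence} then gives existence.

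\emph{Kernel form of the mode factor.} Let $F_m(x):=\sum_{r=1}^m\cos(rx)$, so that
\begin{equation*}
F_m(x)=\frac{\sin\left(\frac{(2m+1)x}{2}\right)}{2\sin\left(\frac{x}{2}\right)}-\frac12 .
\end{equation*}
Put $b:=2\pi\ell/N$. The product-to-sum formula gives
\begin{equation*}
\Gamma_{N,m,q}^{(\ell)}=F_m(a)-\tfrac12F_m(a+b)-\tfrac12F_m(a-b).
\end{equation*}
Under the hypothesis, $(2m+1)a/2\leq 2<\pi$. So $a$ lies inside the main lobe of the Dirichlet kernel, where $F_m$ is large. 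The aim is to prove that $F_m(a)$ strictly exceeds the average of $F_m$ at the two points $a\pm b$, for every $b\in\frac{2\pi}{N}[N-1]$.

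\emph{Main obstacle.} This inequality is the hardest step, because one of the points $a\pm b$ can land near $0$, where $F_m$ is larger than $F_m(a)$. The first thing I would try is to go back to the form $\Gamma=2\sum_{r=1}^m\cos(ar)\sin^2(br/2)$ and pair the terms with $\cos(ar)<0$ against earlier positive terms. I would do this by Abel summation in $r$, using that $\cos(ar)$ is decreasing on the relevant range. I would also use the constraint $(2m+1)a\leq4$, which bounds how much negative mass the tail of the sum can carry. Failing that, I would split into the cases $|a-b|$ small and $|a-b|$ not small (mod $2\pi$), and use the explicit kernel bounds $|F_m(x)|\leq\frac{1}{2|\sin(x/2)|}+\frac12$ together with the lower bound for $F_m(a)$ in the main lobe.

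For part (2), existence holds by assumption, so by Theorem \ref{thm:main_general_q_fourier}(2) it suffices to produce one negative mode. I would take $\ell=q\in[N-1]$. Then $1-\cos(ar)=1-\cos(2\pi qr/N)$ with the unreduced angle; write $\alpha:=2\pi q/N$. Using $\cos^2=\frac12(1+\cos 2x)$,
\begin{equation*}
\Gamma_{N,m,q}^{(q)}=F_m(\alpha)-\tfrac12F_m(2\alpha)-\tfrac m2 .
\end{equation*}
Let $g=\gcd(N,q)$ and $n:=N/g$. Then $\alpha$ is $2\pi$ times a reduced fraction with denominator $n$, where $n\geq2$.

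The sequences $\cos(r\alpha)$ and $\cos(2r\alpha)$ are $n$-periodic, and each complete period sums to $0$ (for $2\alpha$ this requires $n\nmid 2q'$, where $q':=q/g$; the case $n=2$ has $F_m(2\alpha)=m$ and is handled directly). Hence $F_m(\alpha)$ and $F_m(2\alpha)$ reduce to partial sums over fewer than $n$ terms. These are bounded by the explicit kernel estimates above, or more sharply by the maximal partial sums of $\cos(2\pi jk/n)$ over residues. The hypothesis $n\leq(8m+3)/3$ means $m\geq(3n-3)/8$, and the goal is to show that this makes $\tfrac m2$ dominate $F_m(\alpha)-\tfrac12F_m(2\alpha)$, so that $\Gamma^{(q)}<0$.

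The secondary difficulty is getting the sharp constant $8/3$. I expect to need the exact partial sum $F_j(\alpha)=\frac{\sin((2j+1)\alpha/2)}{2\sin(\alpha/2)}-\frac12$ for the reduced index $j=m\bmod n$, combined with a small case analysis on $j$ relative to $n$.
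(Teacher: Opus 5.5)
Your preliminary steps are sound. The reduction to $q^\sharp$ works, and the existence check via $0<mq^\sharp<(m+1)q^\sharp<N$ is a bit quicker than the paper's $\gcd$ argument. The identity $\Gamma_{N,m,q}^{(\ell)}=F_m(a)-\frac12F_m(a+b)-\frac12F_m(a-b)$ is correct, and choosing $\ell=q$ in part (2) is also what the paper does. However, the proposal stops exactly where the content of the corollary lies.

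\textbf{Part (1).} The positivity $\Gamma_{N,m,q}^{(\ell)}>0$ for all $\ell$ is the paper's Lemma \ref{lem:uniform_trigonometric_positivity}. You leave it as a list of things to try, and those strategies do not cover the critical regime.

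Abel summation against the decreasing weights $\kappa_r=\cos(ra)$ needs $S_k=\sum_{r\leq k}(1-\cos rb)\geq k-\nu_0$, with $\nu_0$ well below the index where $\kappa_r$ changes sign. Since $S_k\geq k+\frac12-\frac{1}{2\sin(b/2)}$, this requires $b$ to be at least a constant multiple of $1/n$, where $n=2m+1$. That is exactly the paper's Step 3, valid only for $b\geq 21/(4n)$.

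For small $b$, $1-\cos(rb)\approx(rb)^2/2$ is increasing and loads the negative tail. Positivity then amounts to the moment inequality $\sum_{r=1}^m r^2\cos(ra)>0$, plus control of the quartic correction. At the worst case $a=4/n$ this sum is about $\frac{n^3}{4}\int_0^1\xi^2\cos(2\xi)\,d\xi$, and $\int_0^1\xi^2\cos(2\xi)\,d\xi=\frac{\cos 2}{2}+\frac{\sin 2}{4}\approx0.019$. This is a genuine numerical cancellation that monotonicity of $\kappa_r$ cannot see. Your kernel bound $|F_m(x)|\leq\frac{1}{2|\sin(x/2)|}+\frac12$ is useless there, since $a-b$ is near $0$ and $F_m(a-b)\approx m$.

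The paper handles this regime as follows:
\begin{enumerate}
\item It reduces to $\varpi=4/n$, using that the factor is decreasing in $\varpi$.
\item It expands $1-\cos(rx)$ to fourth order.
\item It proves $\sum r^2\kappa_r>n^3/475$ and $\sum r^4\kappa_r<-n^5/5000$ via a composite midpoint estimate, with exact rational checks for small $m$ and separate arguments for $m=2,4$.
\end{enumerate}

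\textbf{Part (2).} This has a similar hole. Write $g_q=\gcd(N,q)$, $N_q=N/g_q$, $\widehat q=q/g_q$. The kernel bounds you name are too weak. For $\widehat q=1$ they only give $F_j(\alpha)-\frac12F_j(2\alpha)\leq\frac{1}{2\sin(\pi/N_q)}+\frac{1}{4\sin(2\pi/N_q)}-\frac14\approx0.199N_q$. You need this below $m/2\approx0.1875N_q$ when $N_q$ is near $(8m+3)/3$.

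The missing idea is a half-period identity. Put $h_r:=\cos(2\pi\widehat q r/N_q)\bigl(1-\cos(2\pi\widehat q r/N_q)\bigr)$. Then $\Gamma_{N,m,q}^{(q)}=\sum_{r=1}^m h_r$ and $h_{N_q-r}=h_r$. A full period sums to $-N_q/2$ (this uses $N_q\geq3$, which the divisibility hypotheses force). Hence $\sum_{r=1}^M h_r=-N_q/4$ if $N_q=2M+1$. If $N_q=2M$, then $\widehat q$ is odd, $h_M=-2$, and the half sum is $-N_q/4-1$.

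For $m<M$, write $\sum_{r\leq m}h_r=-\frac{N_q}{4}-\sum_{r=m+1}^{M}h_r$ in the odd case, or the same expression with $-\frac{N_q}{4}-1$ in place of $-\frac{N_q}{4}$ in the even case. Using only $h_r\geq-2$ gives at most $\frac{3N_q}{4}-1-2m$ in both cases, which is $\leq-\frac14$ exactly when $3N_q\leq8m+3$. For $m\geq M$ (and for $m\geq N_q$, after removing full periods), the bound $h_r\leq\frac14$ finishes.

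So there is no need to chase a sharp $8/3$ through exact Dirichlet-kernel partial sums. It is not a sharp threshold; it is simply what the crude pointwise bound $h_r\geq-2$ yields.
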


\begin{proof}
  The proof is given in Section \ref{sec:fourier_criterion}.
\end{proof}

\begin{remark}
  {\rm
  No separate indivisibility assumption is needed in part (1).
  Indeed, set $g_q:=\gcd(N,q)$ and $N_q:=N/g_q$.
  Since $g_q$ divides both $q$ and $N-q$, the quotient $q^\sharp/g_q$ is a positive integer.
  The condition in part (1) therefore gives
  \begin{equation*}
    N_q\geq\frac{\pi}{2}\left(2m+1\right)\frac{q^\sharp}{g_q}
    \geq\frac{\pi}{2}\left(2m+1\right)>m+1.
  \end{equation*}
  Hence $N_q$ divides neither $m$ nor $m+1$.
  Since $\gcd\left(N_q,q/g_q\right)=1$, these two conclusions are equivalent to $N\nmid mq$ and $N\nmid\left(m+1\right)q$.

  Part (1) also includes the nearest-neighbor case $m=1$.
  In this case, the parameter condition gives $2\pi q^\sharp/N\leq4/3<\pi/2$, so the coefficient $\cos\left(2\pi q^\sharp/N\right)$ appearing in every mode factor is positive.
  For $m\geq2$, the stability conclusion follows from the uniform trigonometric positivity estimate in Lemma \ref{lem:uniform_trigonometric_positivity}.
  The condition in part (1) is equivalent to $2\pi q^\sharp/N\leq4/\left(2m+1\right)$, which explains the coefficient $\pi/2$.
  For $m=1$, the low-winding condition in Corollary \ref{cor:main_low_winding_stability} gives a larger stable range.
  For $m\geq2$, the stable range in part (1) is larger than the range supplied by the low-winding condition.
  The low-winding corollary nevertheless remains useful because it also provides explicit positively invariant neighborhoods.
  }
\end{remark}

We now specialize to the $1$-twisted profile. Corollary \ref{cor:main_forward_q_existence} shows that it defines a dancing equilibrium exactly when $m\in[N-2]$.
For this branch,
\begin{equation}\label{eq:gamma_N_m_1_ell_definition}
  \Gamma_{N,m,1}^{(\ell)}=\sum_{r=1}^{m}\cos\left(\frac{2\pi r}{N}\right)\left(1-\cos\left(\frac{2\pi\ell r}{N}\right)\right),\quad \ell\in[N-1].
\end{equation}
Comparisons between $N$ and $m$ yield explicit stable and unstable subranges, while the remaining transition window is governed by the first-mode factor.

\begin{corollary}\label{cor:main_q1_classification}
  Consider the $1$-twisted dancing orbit of the forward $m$-neighbor model \eqref{eq:forward_m_model1}, with $m\in[N-2]$.

  \begin{enumerate}
    \item If
    \begin{equation*}
      N\geq3m+2,
    \end{equation*}
    then this orbit is locally orbitally exponentially stable.

    \item If
    \begin{equation*}
      3\leq N\leq 2.9m,
    \end{equation*}
    then this orbit is nonlinearly unstable in the quotient sense.

    \item If
    \begin{equation*}
      2.9m<N\leq3m+1,
    \end{equation*}
    then the following first-mode alternatives hold:
    \begin{enumerate}
      \item if $\Gamma_{N,m,1}^{(1)}>0$, this orbit is locally orbitally exponentially stable.
      \item if $\Gamma_{N,m,1}^{(1)}<0$, this orbit is nonlinearly unstable in the quotient sense.
      \item if $\Gamma_{N,m,1}^{(1)}=0$, then $\left(N,m\right)=\left(4,1\right)$ and this orbit is nonlinearly unstable in the quotient sense.
    \end{enumerate}
  \end{enumerate}
\end{corollary}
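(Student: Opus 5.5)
The plan is to reduce everything to Theorem \ref{thm:main_general_q_fourier} with $q=1$. Corollary \ref{cor:main_forward_q_existence} guarantees that the branch exists for $m\in[N-2]$. What remains is to decide the signs of the factors $\Gamma_{N,m,1}^{(\ell)}$. I would begin by writing them in closed form with product-to-sum identities:
\begin{equation*}
  \Gamma_{N,m,1}^{(\ell)}=S(1)-\tfrac12 S(\ell+1)-\tfrac12 S(\ell-1),\qquad S(k):=\sum_{r=1}^{m}\cos\left(\frac{2\pi kr}{N}\right),
\end{equation*}
and, for $N\nmid k$, $S(k)=\frac{\sin\left((2m+1)\pi k/N\right)}{2\sin\left(\pi k/N\right)}-\frac12$, with $S(k)=m$ when $N\mid k$. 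The symmetry $\ell\mapsto N-\ell$ lets us restrict to $1\le\ell\le N/2$.

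For part (1), assume $N\ge 3m+2$. Then $2\pi m/N<2\pi/3$, so $S(1)$ is a sum of cosines at angles below $2\pi/3$. I would show that the worst mode is $\ell=1$, or more precisely that every $\Gamma^{(\ell)}$ is bounded below by a positive quantity. Two routes look available.

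\emph{First route.} Split the terms with $\cos(2\pi r/N)\ge0$ (those with $r\le N/4$) from the negative ones (those with $N/4<r\le m<N/3$). Then bound $1-\cos(2\pi\ell r/N)\le 2$ on the negative part. On the positive part, use that the weighted average of $1-\cos$ over $\ell$ is controlled.

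\emph{Second route.} Use the closed form directly. Under $N\ge3m+2$ we have $S(1)\ge$ some explicit positive amount, and $|S(\ell\pm1)|\le \frac{1}{2\sin(\pi(\ell\pm1)/N)}+\frac12$ gives positivity for large $\ell$. Small $\ell$ would then be checked by monotonicity in $\ell$.

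For part (2), assume $N\le 2.9m$. Here I would exhibit a single negative factor, namely $\ell=1$, for which
\begin{equation*}
  \Gamma^{(1)}=S(1)-\tfrac12 S(2)-\tfrac m2\mathbf 1_{N\mid 2}\text{-type corrections}.
\end{equation*}
Concretely, $\Gamma^{(1)}=\sum_r\cos x_r(1-\cos x_r)$ with $x_r=2\pi r/N$. For $m$ beyond $N/3$, the negative contributions from angles in $(2\pi/3,\pi]$ outweigh the positive ones. I would estimate the sum by a Riemann integral $\frac{N}{2\pi}\int_0^{2\pi m/N}\cos x(1-\cos x)\,dx$ and track the error terms. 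When $m\ge N/2$ the angles wrap past $\pi$, and I would use the symmetry $r\mapsto N-r$ together with the telescoping closed form instead. Small $N$ (say $N\le 6$) would be handled by direct enumeration. Nonlinear instability then follows from part (2) of Theorem \ref{thm:main_general_q_fourier}.

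For part (3), in the window $2.9m<N\le 3m+1$, I would prove that $\Gamma^{(\ell)}>0$ for all $\ell\ne1,N-1$, using the part (1) estimates, which survive for $\ell\ge2$ since $m<N/2.9$. This isolates $\ell=1$, and the sign alternatives (a) and (b) then follow from Theorem \ref{thm:main_general_q_fourier}. For (c), solving $\Gamma^{(1)}=0$ in the closed form within the window should force $(N,m)=(4,1)$: there $\cos(\pi/2)=0$ kills every factor, since $N=4$, $m=1$, $q=1$ is the four-oscillator splay state. I would then invoke the known nonlinear instability of the degenerate four-oscillator splay state from \cite{H-K}, or the paper's degenerate-case lemma.

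I expect the main obstacle to be the uniform positivity in $\ell\ge2$ near the threshold $N\approx3m$, together with the exact claim that $\Gamma^{(1)}=0$ only at $(4,1)$. The latter requires showing that the trigonometric identity has no other integer solutions, probably by bounding the closed form away from zero via monotonicity in $N/m$ and checking finitely many small cases.
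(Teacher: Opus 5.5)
Your reduction to Theorem~\ref{thm:main_general_q_fourier} with $q=1$ is sound, and so are your focus on the first mode, your appeal to \cite{H-K} at $(4,1)$, and your Riemann-sum idea for part (2). For part (2) the leading term $\sin\phi-\phi/2-\tfrac14\sin2\phi$ is decreasing and negative on $[2\pi/2.9,\pi)$, and the endpoint correction has the right sign, so that route can work with explicit error control. Part (3)(c), however, cannot be done the way you propose.

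\textbf{The zero case (3)(c).} The window $2.9m<N\le3m+1$ contains the transition. The real root of $N\mapsto\Gamma_{N,m,1}^{(1)}$ lies near $c_\ast m+c_\ast/2$ with $c_\ast\approx2.937$, and the slope in $N$ there is of order one. As $m$ varies, the values $\Gamma_{N,m,1}^{(1)}$ at integer pairs in the window are not bounded away from $0$ uniformly. So monotonicity in $N/m$ plus finitely many checks can never exclude an exact zero; an arithmetic input is needed.

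The paper supplies it as follows. It writes $\Gamma_{N,m,1}^{(1)}=\Phi_m(2\pi/N)$ with $\Phi_m(\vartheta)=\sum_{r=1}^m\cos(r\vartheta)(1-\cos(r\vartheta))$. It uses $4\Phi_m(\vartheta)=\mathcal H_m(2\cos\vartheta)$ for an integer polynomial $\mathcal H_m$ built from Chebyshev-type polynomials. It compares $\mathcal H_m$ with the monic integer polynomial $\mathcal K_N=\mathcal P_N-2$, whose roots are the numbers $2\cos(2\pi j/N)$.
\begin{itemize}
\item After reducing to $2m+2\le N\le3m+1$, a sign estimate gives $\Phi_m(2\pi k/N)<0$ for $2\le k\le\lfloor N/2\rfloor$.
\item Hence the only common roots are $2$, which is a simple root of $\mathcal K_N$, and $c_N=2\cos(2\pi/N)$.
\item After dividing both polynomials by $X-2$, their monic gcd over $\mathbb Q$ is a power of $X-c_N$. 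So $c_N\in\mathbb Q$, and being a root of $\mathcal K_N$ it lies in $\mathbb Z$.
\item This forces $N=6$, $m=2$, where $\Gamma_{6,2,1}^{(1)}=-1/2$, a contradiction.
\end{itemize}
Equivalently, a zero at $2\pi/N$ would propagate to every Galois conjugate $2\cos(2\pi j/N)$ with $\gcd(j,N)=1$.

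\textbf{The modes $\ell\ge2$ and part (1).} Your treatment of $\ell\ge2$ fails concretely. The claim that $\Gamma_{N,m,1}^{(\ell)}>0$ for all $2\le\ell\le N-2$ throughout the window is false. The pair $(N,m)=(6,2)$ lies in the window, and $\Gamma_{6,2,1}^{(2)}=\tfrac12\cdot\tfrac32-\tfrac12\cdot\tfrac32=0$. The ``part (1) estimates'' are never supplied, and they would not transfer anyway. In the window $\upsilon=(2m+1)\pi/N$ leaves $(\pi/2,2\pi/3)$; for instance $\upsilon=7\pi/9$ at $(9,3)$.

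The missing idea is full-cycle cancellation. For $2\le\ell\le N-2$ one has $\sum_{r=1}^{N-1}\cos(2\pi r/N)(1-\cos(2\pi\ell r/N))=0$. Pairing $r\leftrightarrow N-r$ then gives
\[
\Gamma_{N,m,1}^{(\ell)}=\tfrac12\sum_{r=m+1}^{N-m-1}\Bigl(-\cos\tfrac{2\pi r}{N}\Bigr)\Bigl(1-\cos\tfrac{2\pi\ell r}{N}\Bigr).
\]
\begin{itemize}
\item This is nonnegative whenever $2m+2\le N\le4m$, because every middle index satisfies $N/4<r<3N/4$.
\item It is strictly positive when the middle range contains two consecutive indices $r,r+1$. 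Vanishing would force $N\mid\ell r$ and $N\mid\ell(r+1)$, hence $N\mid\ell$.
\item In the single-index case $N=2m+2$ one gets $\Gamma_{N,m,1}^{(1)}=1-N/4\le0$. So positivity of the higher modes is conditional on $\Gamma_{N,m,1}^{(1)}>0$, which is exactly what parts (1) and (3)(a) need.
\end{itemize}

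Finally, for part (1) you give no argument that $\Gamma_{N,m,1}^{(1)}>0$ when $3m+2\le N\le4m$, and ``monotonicity in $\ell$'' cannot supply one. The correct expansion is $\Gamma^{(1)}=S(1)-\tfrac m2-\tfrac12S(2)$; there is no divisibility-dependent correction, since $S(0)=m$. The term $-\tfrac m2$ is of the same order as $S(1)$. The relative margin $\frac{\sin\upsilon}{\upsilon}(2-\cos\upsilon)-1$ is only about $0.034$ at $\upsilon=2\pi/3$. So you need the exact form $4\Gamma_{N,m,1}^{(1)}=\frac{\sin\upsilon}{\sin y}\bigl(2-\frac{\cos\upsilon}{\cos y}\bigr)-n$ with $y=\pi/N$, together with positivity of $\sin x(2-\cos x)-x$ on $[\pi/2,2\pi/3]$.
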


\begin{proof}
  The proof is given in Section \ref{sec:q1_three_zone}.
\end{proof}

\begin{remark}
  {\rm
  When $m=1$, the condition in part (2) cannot hold because $N\geq3$.
  Part (1) covers every $N\geq5$, while part (3) contains the two remaining cases $N=3$ and $N=4$.
  For $N=3$, one has $\Gamma_{3,1,1}^{(1)}=-3/4<0$, so the negative alternative in part (3) gives nonlinear instability in the quotient sense.
  Lemma \ref{lem:q1_first_mode_zero_41} shows that $\left(N,m\right)=\left(4,1\right)$ is the unique zero case.
  At this pair, the edgewise margin is zero and every nonzero Fourier-mode factor vanishes, so the general edgewise and Fourier criteria are both inconclusive.
  The nonlinear instability of this four-oscillator nearest-neighbor orbit was proved in \cite[Sec. 4.1]{H-K}.

  The constant $2.9$ is an explicit finite-size threshold obtained from the estimates in the proof and is not intended to be sharp.
  If $m\to\infty$ and $N/m\to c\in\left(2,3\right)$, then
  \begin{equation*}
    \frac{4\Gamma_{N,m,1}^{(1)}}{2m+1}
    \longrightarrow
    \frac{\sin\left(2\pi/c\right)}{2\pi/c}
    \left(2-\cos\left(2\pi/c\right)\right)-1.
  \end{equation*}
  The limiting sign equation is $\sin\left(2\pi/c\right)\left(2-\cos\left(2\pi/c\right)\right)=2\pi/c$.
  With $x:=2\pi/c$, the left-hand side minus the right-hand side has derivative $2\cos x\left(1-\cos x\right)<0$ on $\left(2\pi/3,\pi\right)$, is positive at $x=2\pi/3$, and is negative at $x=\pi$.
  Hence the limiting equation has a unique solution $c_\ast\approx2.937$.
  Thus the rigorous threshold $2.9$ lies slightly below the limiting transition, while the remaining interval $2.9m<N\leq3m+1$ is stated in terms of the exact first-mode factor.
  The proof of the corollary relies only on the finite-size estimates.
  }
\end{remark}

We next specialize to the $2$-twisted branch.
By Corollary \ref{cor:main_forward_q_existence}, this branch exists precisely when
\begin{equation}\label{eq:q2_main_existence_conditions}
  N\nmid2m\quad\text{and}\quad N\nmid2\left(m+1\right).
\end{equation}
For the $2$-twisted branch, the stability transition is determined by the first Fourier mode.
The sign of this mode factor can be characterized explicitly in terms of $N$ and $m$, while finite trigonometric estimates control all remaining modes in the stable range.

\begin{corollary}\label{cor:main_q2_classification}
  Consider the $2$-twisted profile of the forward $m$-neighbor model \eqref{eq:forward_m_model1}, and suppose that \eqref{eq:q2_main_existence_conditions} holds.
  Then the corresponding $2$-twisted dancing orbit satisfies the following classification.

  \begin{enumerate}
    \item If
    \begin{equation*}
      N\geq6m+3,
    \end{equation*}
    then this orbit is locally orbitally exponentially stable.

    \item If
    \begin{equation*}
      N\leq6m+2,
    \end{equation*}
    then this orbit is nonlinearly unstable in the quotient sense.
  \end{enumerate}
\end{corollary}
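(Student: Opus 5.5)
The plan is to reduce the classification to the sign of the first-mode factor $\Gamma^{(1)}_{N,m,2}$ and then apply Theorem \ref{thm:main_general_q_fourier}. The degenerate pair $(N,m)=(8,1)$ will be handled separately via Lemma \ref{lem:q2_degenerate_instability}.

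\emph{Closed form for the mode factors.} Product-to-sum formulas give
\begin{equation*}
  \Gamma^{(\ell)}_{N,m,2}=\sum_{r=1}^m\cos\tfrac{4\pi r}{N}-\tfrac12\sum_{r=1}^m\Bigl(\cos\tfrac{2\pi(2+\ell)r}{N}+\cos\tfrac{2\pi(2-\ell)r}{N}\Bigr).
\end{equation*}
Each sum is a Dirichlet-type sum $S(x):=\sum_{r=1}^m\cos(rx)=\frac{\sin((m+\frac12)x)}{2\sin(x/2)}-\frac12$. In particular,
\begin{equation*}
  \Gamma^{(1)}=S(4\pi/N)-\tfrac12 S(6\pi/N)-\tfrac12 S(2\pi/N).
\end{equation*}
With $x=2\pi/N$ this becomes an explicit function of $x$ and of the phase $(2m+1)x/2$.

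\emph{Part (2), instability for $N\le 6m+2$.} Here I would show $\Gamma^{(1)}_{N,m,2}<0$ whenever the existence conditions \eqref{eq:q2_main_existence_conditions} hold and $(N,m)\neq(8,1)$. Then Theorem \ref{thm:main_general_q_fourier}(2) gives instability, and Lemma \ref{lem:q2_degenerate_instability} covers $(8,1)$.

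A convenient route is to split $[1,6m+2]$ into two regimes.
\begin{itemize}
  \item[(a)] When $N\le\frac{8m+3}{3}\gcd(N,2)$, Corollary \ref{cor:main_general_q_parameter_ranges}(2) applies directly. For even $N$ this covers $N\le(16m+6)/3$.
  \item[(b)] For the remaining window, where $N$ is odd with $N\le 6m+2$, or $N$ is even with $(16m+6)/3<N\le 6m+2$, I would sign $\Gamma^{(1)}$ directly. Writing $\theta=(2m+1)\pi/N\in[\pi/2\cdot\text{const},\pi)$, the closed form expresses $\Gamma^{(1)}$ as a combination of $\sin(2\theta)$, $\sin(3\theta)$, $\sin\theta$ over $\sin(2\pi/N)$, $\sin(3\pi/N)$, $\sin(\pi/N)$, minus a constant.
\end{itemize}
In regime (b) the key observation is that $\theta\ge\pi/3$ corresponds exactly to $N\le 6m+3$. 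The sign should be controlled by comparing the numerator against the constant term. I expect the identity to factor so that a term such as $\sin(3\theta)\cdot(\text{positive})$ or $(1-2\cos 2\theta)$ appears, changing sign exactly at $\theta=\pi/3$, i.e.\ at $N=6m+3$.

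\emph{Part (1), stability for $N\ge 6m+3$.} I must show $\Gamma^{(\ell)}_{N,m,2}>0$ for all $\ell\in[N-1]$. If $N\ge \frac{\pi}{2}(2m+1)\cdot 2$, Corollary \ref{cor:main_general_q_parameter_ranges}(1) applies directly. This leaves the window $6m+3\le N<\pi(2m+1)$, where the $\cos(4\pi r/N)$ weights are no longer all positive.

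In that window I would proceed in two steps.
\begin{itemize}
  \item Use the symmetry $\ell\mapsto N-\ell$, and show that $\ell=1$ is the minimizing mode. Positivity of $\Gamma^{(1)}$ for $N\ge 6m+3$ follows from the sign analysis above.
  \item For $\ell\ge2$, bound $\Gamma^{(\ell)}$ below by writing $\Gamma^{(\ell)}=S(4\pi/N)-\frac12[S((2+\ell)x)+S((2-\ell)x)]$. Here $S(4\pi/N)$ is bounded below by a positive multiple of $m$ because $4\pi m/N<2\pi/3$, and the other two Dirichlet sums are bounded by $1/(2|\sin(\cdot/2)|)+1/2$, which is small unless $(2\pm\ell)x$ is near $0$ mod $2\pi$.
\end{itemize}
The near-resonant modes, $\ell$ close to $2$ or $N-2$, must be treated by the closed form as in the $\ell=1$ computation. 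The cases $\ell=2,3$ are the delicate ones.

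\emph{Main obstacle.} The hardest step is the uniform positivity of $\Gamma^{(\ell)}$ for all $\ell\ge2$ in the narrow window $6m+3\le N<\pi(2m+1)$, together with small-$m$ cases. The latter I would verify by direct finite computation for $m\le$ a few.
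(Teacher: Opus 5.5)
\textbf{Verdict: genuine gap.} Your overall architecture matches the paper's, but the step everything hinges on, the sign of the first-mode factor at the sharp threshold, rests on an expected factorization that does not exist.

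\textbf{What matches.} Your Dirichlet-sum formula is exactly the representation $\Gamma^{(\ell)}_{N,m,2}=(2Q_2-Q_{\ell-2}-Q_{\ell+2})/4$, with $Q_k=\sin(nky)/\sin(ky)$, $n=2m+1$, $y=\pi/N$. (The constants cancel, so $4\Gamma^{(1)}_{N,m,2}=2Q_2-Q_1-Q_3$ with no constant term.) Instability is reduced to the first mode plus Corollary~\ref{cor:main_general_q_parameter_ranges}(2) for small $N$, and $(8,1)$ is sent to Lemma~\ref{lem:q2_degenerate_instability}.

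\textbf{The gap.} You expect $\Gamma^{(1)}_{N,m,2}$ to contain a factor changing sign exactly at $\theta=n\pi/N=\pi/3$, i.e.\ at $N=6m+3$. Two cases rule this out:
\begin{itemize}
  \item at $(N,m)=(9,1)$ one has $\theta=\pi/3$, but $\Gamma^{(1)}_{9,1,2}=\cos(4\pi/9)\bigl(1-\cos(2\pi/9)\bigr)>0$;
  \item at $(8,1)$ one has $\theta=3\pi/8$, but $\Gamma^{(1)}_{8,1,2}=0$.
\end{itemize}
The factor you have in mind, $2\cos\theta-1$, appears only after replacing $\sin(ky)$ by $ky$. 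That gives $4\Gamma^{(1)}_{N,m,2}\approx\frac{2}{3y}\sin\theta\,(1-\cos\theta)(2\cos\theta-1)$. At the critical size $N=6m+2$ one has $\theta-\pi/3=y/3$, so this leading term is only about $-1/6$. It competes with a finite-size correction of roughly $+\frac{\sqrt3}{4}y$, and at $(8,1)$ the exact value is zero. At $N=6m+3$ the leading term vanishes, so it says nothing about the first stable size either. The sharp threshold therefore needs an exact inequality that exploits the integer gap between $N$ and $3n$.

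\textbf{The missing idea (Lemma~\ref{lem:q2_first_mode_sign}).} With $\upsilon=ny$, $c_\upsilon=\cos\upsilon$, $c_y=\cos y$, one has exactly
\[
2\Gamma^{(1)}_{N,m,2}=-\frac{\sin\upsilon\,(c_\upsilon-c_y)(2c_\upsilon c_y-2c_y^2+1)}{\sin y\,c_y(4c_y^2-1)}.
\]
So the sign is that of $c_\upsilon-r_\ast$ with $r_\ast=c_y-1/(2c_y)<1/2$, and the zero lies strictly to the right of $\pi/3$. Write $\Delta=3n-N\geq1$, so $\upsilon=(\pi+\Delta y)/3$. The triple-angle identity and the monotonicity of $4x^3-3x$ on $[0,1/2]$ turn $c_\upsilon\leq r_\ast$ into a polynomial comparison in $c_y$, with equality only at $(8,1)$.

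The paper's proof of the corollary is then an assembly:
\begin{itemize}
  \item Corollary~\ref{cor:main_general_q_parameter_ranges}(2) for $N\leq2m+1$;
  \item Lemma~\ref{lem:q2_first_mode_sign} for $2m+2\leq N\leq6m+2$;
  \item Lemma~\ref{lem:q2_degenerate_instability} for $(8,1)$.
\end{itemize}

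\textbf{The stable side.} The same identity gives $\Gamma^{(1)}_{N,m,2}>0$ for $N\geq6m+3$, since then $c_\upsilon\geq1/2>r_\ast$. For $\ell\geq2$, however, your outline is not yet an argument.
\begin{itemize}
  \item Near $\theta=\pi/3$, the quantity $4\Gamma^{(2)}_{N,m,2}$ is only about $0.03n$, while the individual terms (e.g.\ $Q_0=n$) are of size $n$. Crude Dirichlet bounds therefore fail. The paper needs $\sin\psi\,(1+4\sin^2(\psi/3))>\psi$ and $\sin\psi\,(2-\cos\psi)>\psi$ here.
  \item The mode $\ell=3$ needs $Q_5<Q_3$ on top of the $\ell=1$ inequality.
  \item Your claim that $\ell=1$ is the minimizing mode is unproved and also unnecessary. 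Lemma~\ref{lem:q2_positive_factors} shows $2Q_2>Q_{\ell-2}+Q_{\ell+2}$ mode by mode. It handles the remaining modes with a tail bound $Q_k<Q_2$ for $3\leq k\leq N-3$, in the spirit of the crude bound you describe.
  \item The case $m=1$ is immediate, because $\cos(4\pi/N)>0$ for $N\geq9$.
\end{itemize}
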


\begin{proof}
  The proof is given in Section \ref{sec:q2_classification}.
\end{proof}

\begin{remark}
  {\rm
  The $1$-twisted and $2$-twisted classifications take different forms.
  For the $1$-twisted branch, the estimates above give explicit stable and unstable subranges, while the remaining transition range is governed by the exact first-mode factor.
  For the $2$-twisted branch, a branch-specific factorization and a triple-angle comparison yield the sharp threshold $N=6m+3$.
  Consequently, Corollary \ref{cor:main_q2_classification} is stated entirely through inequalities involving $N$ and $m$, whereas Corollary \ref{cor:main_q1_classification} retains a first-mode sign condition in the remaining transition range.

  Within the stable regime of the $2$-twisted branch, the available conclusions differ in strength.
  When $N\geq8m+1$, Corollary \ref{cor:main_low_winding_stability} provides explicit positively invariant neighborhoods contained in the local basin of attraction.
  For existing branches in the additional stable range $6m+3\leq N\leq8m$, positivity of all nonzero-mode factors gives local orbital exponential stability but does not by itself provide such a neighborhood.

  The pair $\left(N,m\right)=\left(8,1\right)$ is the unique fully degenerate case in the $2$-twisted classification.
  At this pair, every nonzero-mode factor vanishes, so Theorem \ref{thm:main_general_q_fourier} gives no conclusion.
  Lemma \ref{lem:q2_degenerate_instability} nevertheless proves nonlinear instability through a four-periodic reduction to the four-oscillator nearest-neighbor dynamics.
  For every other existing $2$-twisted branch in the unstable range, at least one Fourier-mode factor is negative.
  }
\end{remark}

\begin{remark}
  {\rm
  The mode factors and the existence conditions are invariant under the replacement $q\mapsto N-q$, while the common angular velocity changes sign.
  Hence the classifications for $q=1$ and $q=2$ also apply to the existing branches with twisted indices $N-1$ and $N-2$, respectively.

  Corollary \ref{cor:main_low_winding_stability} applies to every twisted index satisfying the low-winding condition and provides explicit positively invariant neighborhoods.
  Corollary \ref{cor:main_general_q_parameter_ranges} gives additional explicit stability and instability ranges for arbitrary twisted indices, although the specialized $q=1$ and $q=2$ classifications remain sharper.
  For $q=1$, part (2) of that corollary gives direct instability conclusions for the transition-range pairs $\left(N,m\right)=\left(6,2\right)$ and $\left(N,m\right)=\left(9,3\right)$.
  When $q^\sharp\geq3$, Corollary \ref{cor:main_general_q_parameter_ranges} still provides a sufficient finite-size range for arbitrary $q$. 
  This general range is typically less sharp than the specialized ranges obtained for $q^\sharp=1$ and $q^\sharp=2$, so a gap remains between the available sufficient condition and a complete finite-size classification. 
  Closing this gap would require sharper comparisons among the Fourier-mode factors.
  }
\end{remark}


\section{Proofs of existence results}\label{sec:existence}
\setcounter{equation}{0}

This section proves the existence results stated in Section \ref{sec:main_results}.
The argument has three parts.
We first prove that symmetry and acyclicity exclude dancing equilibria, then treat class-constant profiles associated with $q$-twisted state equitable partitions, and finally specialize this construction to the forward $m$-neighbor model.
By Lemma \ref{lem:locked_profile_equivalence}, a dancing equilibrium is equivalent to a nonstationary locked profile $\left(\phi,\Omega\right)$ satisfying
\begin{equation*}
  \Omega=\sum_{j\neq i}a_{ij}\sin\left(\phi_j-\phi_i\right),\quad i\in[N],
\end{equation*}
with $\Omega\neq0$.
Thus all coupling sums generated by the profile must have the same nonzero value.
The results below give general necessary conditions, a sufficient partition-based construction, and an exact criterion for prescribed $q$-twisted profiles in the forward model.
They do not classify all dancing equilibria for an arbitrary fixed coupling matrix.

\subsection{Structural obstructions}\label{sec:structural_obstructions}

We first prove the two necessary conditions in Theorem \ref{thm:main_necessary_obstructions}.
Symmetric coupling forces cancellation of the total coupling sum, whereas every finite directed acyclic graph contains a vertex with no incoming edge.

\begin{proof}[\textbf{Proof of Theorem \ref{thm:main_necessary_obstructions}}]
  Suppose that system \eqref{eq:ackm1} admits a dancing equilibrium. By Lemma \ref{lem:locked_profile_equivalence}, there exists a nonstationary locked profile $\left(\phi,\Omega\right)$ with $\Omega\neq0$.
  We first show that $A$ cannot be symmetric. If $A$ were symmetric, summing the locking equations over $i$ would yield
  \begin{equation*}
    N\Omega=\sum_{i,j=1}^{N}a_{ij}\sin\left(\phi_j-\phi_i\right).
  \end{equation*}
  For every pair of distinct indices $i$ and $j$, $a_{ij}\sin\left(\phi_j-\phi_i\right)+a_{ji}\sin\left(\phi_i-\phi_j\right)=0$ by the symmetry of $A$ and the oddness of the sine function.
  The diagonal terms vanish as well. Hence $N\Omega=0$, which contradicts $\Omega\neq0$. Thus $A$ is asymmetric.

  We next show that $\mathcal G(A)$ must contain a directed cycle. If $\mathcal G(A)$ were acyclic, then, as a finite directed acyclic graph, it would contain a vertex $i_\ast$ of indegree zero.
  By the definition of $\mathcal G(A)$, $a_{i_\ast j}=0$ for $j\neq i_\ast$.
  The locking equation for oscillator $i_\ast$ would then give
  \begin{equation*}
    \Omega=\sum_{j=1}^{N}a_{i_\ast j}\sin\left(\phi_j-\phi_{i_\ast}\right)=0,
  \end{equation*}
  again contradicting $\Omega\neq0$. Therefore $\mathcal G(A)$ contains a directed cycle.
\end{proof}

\subsection{Profiles from q-twisted state equitable partitions}\label{sec:equitable_partition_profiles}

The structural equitability condition is a weighted directed analogue of the external equitable partitions used in network theory, formulated here in terms of incoming weights \cite{G-R,S-B-M}.
It permits arbitrary coupling within each class and requires only that vertices in the same target class receive the same total input from every other source class.
The additional $q$-twisted state equitability condition requires the coupling sums generated by the prescribed twisted class phases to agree across all classes.

\begin{proof}[\textbf{Proof of Theorem \ref{thm:main_equitable_partition_profiles}}]
  Fix $r\in[p]$ and $i\in C_r$.
  If $j\in C_r$, then the definition of the class-constant profile gives $\alpha_j^{(q)}-\alpha_i^{(q)}\equiv0\pmod{2\pi}$, and hence the contribution from $C_r$ vanishes.
  If $d\in[p-1]$ and $j\in C_{\langle r+d\rangle_p}$, then $\alpha_j^{(q)}-\alpha_i^{(q)}\equiv\frac{2\pi qd}{p}\pmod{2\pi}$.
  Therefore
  \begin{equation*}
    \sum_{j=1}^{N}a_{ij}\sin\left(\alpha_j^{(q)}-\alpha_i^{(q)}\right)
    =\sum_{d=1}^{p-1}\sum_{j\in C_{\langle r+d\rangle_p}}a_{ij}\sin\left(\frac{2\pi qd}{p}\right)
    =\Omega_q,
  \end{equation*}
  where the last equality follows from $q$-twisted state equitability.
  Thus $\alpha^{(q)}$ satisfies the locking equations with common angular velocity $\Omega_q$.
  Since $\Omega_q\neq0$, Lemma \ref{lem:locked_profile_equivalence} gives the stated dancing equilibrium.
\end{proof}

\subsection{The forward m-neighbor model and q-twisted profiles}\label{sec:forward_existence}

We now specialize Theorem \ref{thm:main_equitable_partition_profiles} to the forward $m$-neighbor model
\begin{equation}\label{eq:forward_m_existence_model}
  \dot{\theta}_i=K\sum_{r=1}^{m}\sin\left(\theta_{\langle i+r\rangle}-\theta_i\right), \quad i\in[N].
\end{equation}
Equivalently, $a_{i,\langle i+r\rangle}=K$ for $r\in[m]$ and $i\in[N]$, and all other off-diagonal entries vanish.
The case $m=1$ is the nearest-neighbor model, while $m=N-1$ gives the complete graph with equal off-diagonal coupling weights.
For the prescribed profile \eqref{eq:q_twisted_profile_definition}, the common coupling sum in model \eqref{eq:forward_m_existence_model} is
\begin{equation*}
  \Omega_{N,m,q}=K\sum_{r=1}^{m}\sin\left(\frac{2\pi q r}{N}\right).
\end{equation*}
The standard finite trigonometric-sum identities needed to determine when this sum vanishes are stated in Lemma \ref{lem:finite_trig_sums} in Appendix \ref{sec:technical_lemmas}.

\begin{proof}[\textbf{Proof of Corollary \ref{cor:main_forward_q_existence}}]
  For the profile \eqref{eq:q_twisted_profile_definition}, one has
  \begin{equation*}
    \phi_{\langle i+r\rangle}^{(q)}-\phi_i^{(q)}
    \equiv\frac{2\pi q r}{N}\pmod{2\pi}, \quad r\in[m].
  \end{equation*}
  Therefore the $i$th coupling sum in the locking equations is
  \begin{equation*}
    K\sum_{r=1}^{m}\sin\left(\phi_{\langle i+r\rangle}^{(q)}-\phi_i^{(q)}\right)
    =K\sum_{r=1}^{m}\sin\left(\frac{2\pi q r}{N}\right)
    =\Omega_{N,m,q},
  \end{equation*}
  which is independent of $i$. Thus the locking equations hold with common angular velocity $\Omega_{N,m,q}$,
  and the profile gives a $q$-twisted dancing equilibrium exactly when $\Omega_{N,m,q}\neq0$.
  Applying Lemma \ref{lem:finite_trig_sums} with $x=2\pi q/N$ gives
  \begin{equation*}
    \Omega_{N,m,q}=K\frac{\sin\left(\frac{m\pi q}{N}\right)\sin\left(\frac{\left(m+1\right)\pi q}{N}\right)}{\sin\left(\frac{\pi q}{N}\right)}.
  \end{equation*}
  Since $q\in[N-1]$, the denominator is nonzero. Hence $\Omega_{N,m,q}\neq0$ if and only if neither $\sin\left(m\pi q/N\right)$ nor $\sin\left(\left(m+1\right)\pi q/N\right)$ vanishes.
  This is equivalent to
  \begin{equation*}
    N\nmid mq  \quad \text{and} \quad N\nmid \left(m+1\right)q.
  \end{equation*}
\end{proof}


\section{Proofs of stability results}\label{sec:stability}
\setcounter{equation}{0}

This section proves the stability results stated in Section \ref{sec:main_results}.
Since a dancing equilibrium is a relative equilibrium rather than an isolated equilibrium, we pass to a rotating frame and analyze the induced dynamics modulo $\operatorname{span}\left\{\mathbf 1\right\}$.
We first establish the Fourier representation and the rotating-frame quotient framework.
Next, we prove the edgewise stability criterion and its low-winding specialization.
We then prove the Fourier-mode criterion and the general parameter-range corollary, including the construction of explicit escaping perturbations.
The final two subsections treat the $1$-twisted and $2$-twisted classifications.
The technical tools used below are collected in Appendix \ref{sec:technical_lemmas}, while the proofs of the model-specific lemmas stated in this section are deferred to Appendix \ref{sec:model_lemma_proofs}.

\subsection{Fourier reduction and rotating-frame analysis}\label{sec:spectral_framework}

We first introduce the Fourier notation and derive the DFT reduction for the forward $m$-neighbor model.
We then derive the rotating-frame perturbation equation and the quotient spectral criterion.
The standard stability and DFT facts used below are collected in Appendix \ref{sec:technical_lemmas}.
The proofs of Lemma \ref{lem:forward_model_fourier_reduction} and Lemma \ref{lem:quotient_linearized_criterion} are deferred to Appendix \ref{sec:model_lemma_proofs}.
Under the positive-exponent convention, the vectors below are the columns of an unnormalized inverse DFT matrix.
Standard treatments of circulant matrices and their diagonalization by the DFT are given in \cite{Ba,Da,Gr,Su}, while \cite{O-S-S-P-P} presents an application-oriented account in vibration analysis.

\vspace{0.5em}

Let $\omega_N:=\exp\left(2\pi\mathrm i/N\right)$. For each $\ell\in\left\{0,1,\dots,N-1\right\}$, define the Fourier mode $v^{(\ell)}\in\mathbb C^N$ by
\begin{equation*}
  v_i^{(\ell)}:=\omega_N^{\ell\left(i-1\right)},\quad i\in[N].
\end{equation*}
For the linearization at the $q$-twisted branch, define
\begin{equation*}
  \left(L_{m,q}u\right)_i:=K\sum_{r=1}^{m}\cos\left(\frac{2\pi q r}{N}\right)\left(u_{\langle i+r\rangle}-u_i\right),\quad i\in[N].
\end{equation*}
The mode factors $\Gamma_{N,m,q}^{(\ell)}$ are defined in \eqref{eq:gamma_general_q_definition}.

\begin{lemma}\label{lem:forward_model_fourier_reduction}
  For each $\ell\in\left\{0,1,\dots,N-1\right\}$, the Fourier mode $v^{(\ell)}$ is an eigenvector of $L_{m,q}$ with eigenvalue
  \begin{equation*}
    \lambda_{\ell}^{(q)}
    =K\sum_{r=1}^{m}\cos\left(\frac{2\pi q r}{N}\right)
    \left(\omega_N^{\ell r}-1\right).
  \end{equation*}
  Moreover, $\operatorname{Re}\lambda_{\ell}^{(q)}=-K\Gamma_{N,m,q}^{(\ell)}$.
  The mode $\ell=0$ is the phase-shift mode.
  For every $\ell\in[N-1]$, the real subspace generated by the real and imaginary parts of $v^{(\ell)}$ is invariant under $L_{m,q}$ and transverse to $\operatorname{span}\left\{\mathbf 1\right\}$.
\end{lemma}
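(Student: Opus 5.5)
The argument is a direct computation on Fourier modes, using only the cyclic structure of the index map $i\mapsto\langle i+r\rangle$. I will proceed in three steps.

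\textbf{Eigenvalues.} Fix $\ell\in\{0,1,\dots,N-1\}$ and substitute $u=v^{(\ell)}$ into the definition of $L_{m,q}$. Since $\omega_N^N=1$, we have $v^{(\ell)}_{\langle i+r\rangle}=\omega_N^{\ell(i-1+r)}=\omega_N^{\ell r}v^{(\ell)}_i$ for every $i\in[N]$ and $r\in[m]$; the wrap-around produced by $\langle\cdot\rangle$ only changes the exponent by a multiple of $N$. Factoring out $v^{(\ell)}_i$ then gives
\begin{equation*}
  \left(L_{m,q}v^{(\ell)}\right)_i=\lambda_\ell^{(q)}v^{(\ell)}_i,\qquad \lambda_\ell^{(q)}=K\sum_{r=1}^{m}\cos\left(\frac{2\pi qr}{N}\right)\left(\omega_N^{\ell r}-1\right).
\end{equation*}
Taking real parts and using $\operatorname{Re}\,\omega_N^{\ell r}=\cos(2\pi\ell r/N)$ yields $\operatorname{Re}\lambda_\ell^{(q)}=-K\Gamma^{(\ell)}_{N,m,q}$, by comparison with \eqref{eq:gamma_general_q_definition}.

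\textbf{Phase-shift mode.} For $\ell=0$ we have $v^{(0)}=\mathbf 1$ and $\lambda_0^{(q)}=0$. This mode is exactly the phase-shift direction, consistent with $L_{m,q}\mathbf 1=0$.

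\textbf{Invariant real planes.} For $\ell\in[N-1]$, write $v^{(\ell)}=a+\mathrm i b$ with $a,b\in\mathbb R^N$, and $\lambda_\ell^{(q)}=\mu+\mathrm i\nu$. Because $L_{m,q}$ has real entries, separating real and imaginary parts of $L_{m,q}(a+\mathrm i b)=(\mu+\mathrm i\nu)(a+\mathrm i b)$ gives
\begin{equation*}
  L_{m,q}a=\mu a-\nu b,\qquad L_{m,q}b=\nu a+\mu b,
\end{equation*}
so $\operatorname{span}_{\mathbb R}\{a,b\}$ is invariant.

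For transversality, I will use the orthogonality relation $\sum_{i=1}^N\omega_N^{\ell(i-1)}=0$ for $\ell\not\equiv0\pmod N$. It says $\langle a,\mathbf 1\rangle=\langle b,\mathbf 1\rangle=0$, so the plane lies in $\mathbf 1^\perp$ and meets $\operatorname{span}\{\mathbf 1\}$ only at $0$. The plane is nontrivial because $a_1=1$, so $a\neq0$. When $b=0$ (the cases $\ell=N/2$ with $N$ even) the plane degenerates to a line, and the same argument applies.

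I do not expect a real obstacle in this lemma. The only care needed is the modular-index bookkeeping in the first step and the observation that $L_{m,q}$ is real, which is what allows the passage from complex eigenvectors to real invariant subspaces.
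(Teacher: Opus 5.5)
Your proposal is correct and follows essentially the paper's proof. The paper cites its circulant-diagonalization lemma with $c_r=K\cos(2\pi qr/N)$, whose proof is exactly your shift identity $v^{(\ell)}_{\langle i+r\rangle}=\omega_N^{\ell r}v^{(\ell)}_i$. It then identifies $v^{(0)}=\mathbf 1$, obtains invariance from the reality of the operator, and obtains transversality from the zero-sum property $\sum_i v^{(\ell)}_i=0$. Your extra remarks, that the real plane is nonzero because $a_1=1$ and that it degenerates to a line when $\ell=N/2$, are correct details the paper leaves implicit.
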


We next introduce rotating coordinates and the corresponding quotient formulation.
Define the vector field $F:\mathbb R^N\to\mathbb R^N$ componentwise by
\begin{equation*}
  F_i(\theta):=\sum_{j\neq i}a_{ij}\sin\left(\theta_j-\theta_i\right),\quad i\in[N].
\end{equation*}
Then system \eqref{eq:ackm1} is $\dot{\theta}=F(\theta)$.
Let $\left(\phi,\Omega\right)$ be a nonstationary locked profile with $\Omega\neq0$, so that $F(\phi)=\Omega\mathbf 1$.
The corresponding dancing equilibrium is the trajectory
$\theta^\ast(t)=\phi+\Omega t\mathbf 1$, and its phase-shift orbit is $\mathcal O_\phi$.
Set $u(t):=\theta(t)-\Omega t\mathbf 1-\phi$.
Using the phase-shift invariance of $F$, we find that $u=0$ is an equilibrium of
\begin{equation}\label{eq:general_perturbation_equation}
  \dot u=F(\phi+u)-F(\phi).
\end{equation}
The phase-shift symmetry corresponds to the subspace $\operatorname{span}\left\{\mathbf 1\right\}$.
Recall that $D(u)=\max_i u_i-\min_i u_i$.
Since $D$ is invariant under uniform phase shifts and vanishes exactly on $\operatorname{span}\left\{\mathbf 1\right\}$, it induces a norm on
$\mathcal Q:=\mathbb R^N/\operatorname{span}\left\{\mathbf 1\right\}$.
If $\Pi:\mathbb R^N\to\mathcal Q$ denotes the canonical projection, this norm is given by
$\left\|\Pi(u)\right\|_D:=D(u)$.
Consequently, $D(u(t))\to0$ means that the rotating-frame solution approaches the phase-shift orbit of the locked profile in quotient diameter.
All quotient-diameter neighborhoods below are understood in a chosen real-valued lift, or equivalently in a local chart on $\mathbb T^N$ modulo uniform phase shifts.
With this quotient formulation in place, the linearization of \eqref{eq:general_perturbation_equation} at $u=0$ is $\dot u=Ju$, where
\begin{equation*}
  J_{ij}=
  \begin{cases}
    a_{ij}\cos\left(\phi_j-\phi_i\right),
    & j\neq i, \\[4pt]
    -\displaystyle\sum_{k\neq i}
    a_{ik}\cos\left(\phi_k-\phi_i\right),
    & j=i.
  \end{cases}
\end{equation*}
Every row of $J$ sums to zero, and hence $J\mathbf 1=0$.
The zero eigenvalue is therefore forced by the continuous phase-shift symmetry rather than by an accidental degeneracy.
Indeed, every point of the form $u=\sigma\mathbf 1$ is an equilibrium of the rotating-frame system, which consequently induces an equilibrium on $\mathcal Q$.
The next lemma applies the finite-dimensional linearized stability principle to this quotient system.
A quotient spectrum contained in the open left half-plane yields orbital exponential stability, whereas a quotient eigenvalue with positive real part yields nonlinear instability in the quotient sense.

\begin{lemma}\label{lem:quotient_linearized_criterion}
  Assume that $0$ is a simple eigenvalue of $J$ in the algebraic sense and that every other eigenvalue of $J$ has strictly negative real part.
  Then the orbit $\mathcal O_\phi$ is locally orbitally exponentially stable in quotient diameter.
  If, instead, $J$ has an eigenvalue with strictly positive real part, then $\mathcal O_\phi$ is nonlinearly unstable in the quotient sense.
\end{lemma}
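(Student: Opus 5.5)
The plan is to reduce the rotating-frame system \eqref{eq:general_perturbation_equation} to a system on a complement of $\operatorname{span}\{\mathbf 1\}$, and then apply the standard finite-dimensional linearized stability and instability principles.

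\textbf{Reduction.} Since $F(\theta+\gamma\mathbf 1)=F(\theta)$, the right-hand side $G(u):=F(\phi+u)-F(\phi)$ depends only on differences $u_j-u_i$. Introduce the coordinates $w:=(u_2-u_1,\dots,u_N-u_1)\in\mathbb R^{N-1}$. Then $\dot w_k=G_{k+1}(u)-G_1(u)$ is a smooth, closed ODE $\dot w=g(w)$ with $g(0)=0$, because $G(u)$ is a function of $w$ alone. The norm $|w|_\infty$ is equivalent to $D(u)$: indeed $|w|_\infty\le D(u)\le 2|w|_\infty$. Hence exponential decay or escape of $w$ transfers directly to the quotient-diameter statements in Definition \ref{def:quotient_orbital_exponential_stability} and in the instability definition.

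\textbf{Spectrum of $Dg(0)$.} Let $S:\mathbb R^N\to\mathbb R^{N-1}$ be $Su=w$, a surjection with kernel $\operatorname{span}\{\mathbf 1\}$. Since $G(u)=\tilde G(Su)$ for some smooth $\tilde G$, we have $J=D\tilde G(0)S$, and $Dg(0)=SD\tilde G(0)$. Consequently $Dg(0)S=SJ$, so $Dg(0)$ is the map induced by $J$ on $\mathbb R^N/\operatorname{span}\{\mathbf 1\}\cong\mathbb R^{N-1}$, using $J\mathbf 1=0$. From the block-triangular form of $J$ in a basis $\{\mathbf 1,e_2,\dots,e_N\}$, the characteristic polynomial factors as $\chi_J(\lambda)=\lambda\,\chi_{Dg(0)}(\lambda)$. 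Under the simplicity assumption, $0$ is not an eigenvalue of $Dg(0)$, and the remaining eigenvalues of $J$ coincide with those of $Dg(0)$. In the first case every eigenvalue of $Dg(0)$ therefore has negative real part. In the second case, the eigenvalue with positive real part is nonzero, so it persists in $Dg(0)$.

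\textbf{Conclusion.} If $Dg(0)$ is Hurwitz, take a quadratic Lyapunov function $V(w)=w^\top Pw$ with $Dg(0)^\top P+PDg(0)=-I$. Combined with the Taylor bound $|g(w)-Dg(0)w|\le c|w|^2$, this gives $\dot V\le-\mu V$ on a small ball. That ball is positively invariant and yields $|w(t)|\le Ce^{-\lambda t}|w(0)|$, hence the exponential estimate in $D$ with $\varepsilon$ chosen accordingly.

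For instability, I invoke the standard nonlinear instability theorem: an equilibrium of a $C^1$ (here analytic) vector field whose linearization has an eigenvalue with positive real part is unstable. Concretely, there is $\eta>0$ such that solutions starting arbitrarily close to $0$ leave the $\eta$-ball. This can be done via the Chetaev function built from the spectral projection onto the unstable subspace, or via the unstable manifold theorem. Translating back gives the quotient instability with $\eta$ replaced by $\eta$ up to the norm-equivalence constant. Since all statements are local, they are made in a chosen lift.

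The main obstacle is the careful identification of the spectrum of $Dg(0)$ with that of $J$ minus the forced zero, including algebraic multiplicity. This is exactly where the algebraic simplicity hypothesis is needed: a nontrivial Jordan block at $0$ would leave a zero eigenvalue in the quotient. The rest is standard.
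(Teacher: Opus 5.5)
Your proposal is correct and takes the same route as the paper. You pass to the quotient through a linear slice: you use the difference coordinates $w$, i.e.\ the slice $u_1=0$, where the paper uses the zero-sum hyperplane $H$. You then identify the reduced linearization with the map induced by $J$ and apply the standard linearized stability/instability principle, transferring the result to $D$ by norm equivalence. Your factorization $\chi_J(\lambda)=\lambda\,\chi_{Dg(0)}(\lambda)$ makes explicit the spectral identification that the paper only asserts.
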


The local spectral criterion does not by itself provide an explicit positively invariant neighborhood or a nonlinear decay estimate.
To obtain both, we use the continuous-time consensus-contraction estimate stated as Lemma \ref{lem:uniform_consensus_contraction} in Appendix \ref{sec:technical_lemmas}.
Its hypotheses will be verified directly for the coefficients obtained from the mean-value representation of the perturbation equation.

\subsection{Edgewise and low-winding stability}\label{sec:edgewise_low_winding}

We first prove the general edgewise criterion in Theorem \ref{thm:main_edgewise_stability}.
Unlike the later Fourier arguments, this criterion is not restricted to equispaced or $q$-twisted profiles: it applies to any locked profile whose associated digraph is strongly connected and whose edgewise phase differences lie in $\left(-\pi/2,\pi/2\right)$.
The proof uses the edgewise phase differences $d_{ij}$, the margin $\rho_\phi(A)$, and the set $\mathcal U_{\phi,\rho}$ introduced in the theorem statement.
The decomposition $\phi+\sigma\mathbf 1+u$ of a point in $\mathcal U_{\phi,\rho}$ is not unique, since a constant vector may be transferred between $\sigma\mathbf 1$ and $u$.
This ambiguity is immaterial because the quotient diameter is invariant under uniform phase shifts.
Equivalently,
\begin{equation*}
  \mathcal U_{\phi,\rho}=\left\{\theta\in\mathbb R^N:D\left(\theta-\phi\right)<\rho\right\}.
\end{equation*}
On $\mathbb T^N$, the same notation denotes the projection of this local lifted set.

\begin{proof}[\textbf{Proof of Theorem \ref{thm:main_edgewise_stability}}]
  By Lemma \ref{lem:locked_profile_equivalence}, the locked profile $\left(\phi,\Omega\right)$ generates the dancing equilibrium $\theta_i^\ast(t)=\phi_i+\Omega t$ for $i\in[N]$.
  Write a nearby solution in rotating coordinates as $\theta_i(t)=\Omega t+\phi_i+u_i(t)$ for $i\in[N]$.
  By $2\pi$-periodicity, each phase difference in the trigonometric terms may be represented as $d_{ij}+u_j-u_i$.
  The mean-value formula gives the time-dependent consensus system
  \begin{equation}\label{eq:general_time_dependent_consensus_form}
    \dot u_i=\sum_{j\neq i}b_{ij}(t)\left(u_j-u_i\right),\quad i\in[N],
  \end{equation}
  where
  \begin{equation*}
    b_{ij}(t):=a_{ij}\int_0^1\cos\left(d_{ij}+s\left(u_j(t)-u_i(t)\right)\right)\,ds.
  \end{equation*}
  Since $u$ is continuously differentiable, every coefficient $b_{ij}$ is continuous.

  We first establish positive invariance.
  On any time interval on which $D(u(t))<\rho$, every associated edge and every $s\in[0,1]$ satisfy
  \begin{equation*}
    \left|d_{ij}+s\left(u_j(t)-u_i(t)\right)\right|
    \leq\left|d_{ij}\right|+D(u(t))
    <\frac{\pi}{2}.
  \end{equation*}
  Hence $b_{ij}(t)>0$ on every associated edge, while $b_{ij}(t)=0$ on every non-associated edge.
  The maximum principle for cooperative consensus systems implies that the componentwise maximum of $u(t)$ is nonincreasing and the componentwise minimum is nondecreasing.
  Thus $D(u(t))$ is nonincreasing as long as $D(u(t))<\rho$.
  By continuation, $D(u(0))<\rho$ implies
  \begin{equation*}
    D(u(t))\leq D(u(0))<\rho,\quad t\geq0,
  \end{equation*}
  so $\mathcal U_{\phi,\rho}$ is positively invariant.
  Define
  \begin{equation*}
    b_\ast(\rho):=\min_{\left(j,i\right)\in\mathcal E_A}a_{ij}\cos\left(\left|d_{ij}\right|+\rho\right)>0, \quad
    b^\ast:=\max_{\left(j,i\right)\in\mathcal E_A}a_{ij}.
  \end{equation*}
  Positive invariance gives $b_\ast(\rho)\leq b_{ij}(t)\leq b^\ast$ on every associated edge for all $t\geq0$, and the coefficient graph of \eqref{eq:general_time_dependent_consensus_form} is the strongly connected digraph $\mathcal G(A)$.
  Lemma \ref{lem:uniform_consensus_contraction} therefore yields constants $C_\rho\geq1$ and $\lambda_\rho>0$ such that
  \begin{equation*}
    D(u(t))\leq C_\rho e^{-\lambda_\rho t}D(u(0)),\quad t\geq0.
  \end{equation*}
  These constants depend only on the associated digraph, the weights on its edges, the locked profile, and the chosen radius $\rho$.

  The preceding estimate already gives exponential convergence in quotient diameter.
  To identify the limiting phase shift, note that the arithmetic mean need not be preserved because the system is directed.
  Nevertheless, the maximum component of $u(t)$ is nonincreasing and the minimum component is nondecreasing, so both extremal functions have limits.
  Since $D(u(t))$ tends to zero exponentially, these limits coincide at some $\sigma_\infty\in\mathbb R$.
  Hence every component of $u(t)$ converges to $\sigma_\infty$, and
  \begin{equation*}
    \left|u_i(t)-\sigma_\infty\right|\leq D(u(t)),\quad i\in[N].
  \end{equation*}
  Therefore
  \begin{equation*}
    \theta(t)-\Omega t\mathbf 1\longrightarrow\phi+\sigma_\infty\mathbf 1
  \end{equation*}
  exponentially.
  Thus $\mathcal U_{\phi,\rho}$ is contained in the local basin of attraction of $\mathcal O_\phi$, and the orbit is locally orbitally exponentially stable in quotient diameter.
\end{proof}

We next specialize the edgewise phase-difference stability result to standard labeled $q$-twisted profiles on directed weighted networks.
For every associated edge $\left(j,i\right)\in\mathcal E_A$, let $\delta_{ij}^{(q)}$ be the unique representative of $\phi_j^{(q)}-\phi_i^{(q)}$ in $\left(-\pi,\pi\right]$.
Thus
\begin{equation*}
  \delta_{ij}^{(q)}\equiv\frac{2\pi q\left(j-i\right)}{N}\pmod{2\pi}.
\end{equation*}
Whenever $\mathcal E_A\neq\varnothing$, define
\begin{equation*}
  \rho_q(A):=\min_{\left(j,i\right)\in\mathcal E_A}\left(\frac{\pi}{2}-\left|\delta_{ij}^{(q)}\right|\right).
\end{equation*}
For $0<\rho<\rho_q(A)$, define
\begin{equation*}
  \mathcal U_{q,\rho}:=\left\{\theta\in\mathbb R^N:D\left(\theta-\phi^{(q)}\right)<\rho\right\}.
\end{equation*}

\begin{corollary}\label{cor:q_twisted_edgewise_stability}
  Consider system \eqref{eq:ackm1} and let $q\in[N-1]$. Assume that there exists $\Omega\neq0$ such that
  \begin{equation*}
    \sum_{r=1}^{N-1}a_{i,\langle i+r\rangle}\sin\left(\frac{2\pi qr}{N}\right)=\Omega,\quad i\in[N].
  \end{equation*}
  Assume further that $\mathcal G(A)$ is strongly connected and that $\rho_q(A)>0$.
  Then the corresponding $q$-twisted dancing orbit is locally orbitally exponentially stable.
  Moreover, for every $0<\rho<\rho_q(A)$, the set $\mathcal U_{q,\rho}$ is positively invariant in a local rotating lift and is contained in the local basin of attraction of the $q$-twisted dancing orbit.
\end{corollary}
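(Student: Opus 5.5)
The corollary is a direct specialization of Theorem \ref{thm:main_edgewise_stability} to the profile $\phi=\phi^{(q)}$, so the plan is to check that theorem's hypotheses and then match its margin and neighbourhood with $\rho_q(A)$ and $\mathcal U_{q,\rho}$.

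First, we show that $\left(\phi^{(q)},\Omega\right)$ is a nonstationary locked profile. Fix $i\in[N]$ and reindex the sum $\sum_{j\neq i}a_{ij}\sin\left(\phi_j^{(q)}-\phi_i^{(q)}\right)$ by $j=\langle i+r\rangle$ with $r\in[N-1]$. This map is a bijection from $[N-1]$ onto $[N]\setminus\{i\}$. Moreover, $\phi^{(q)}_{\langle i+r\rangle}-\phi^{(q)}_i\equiv 2\pi qr/N\pmod{2\pi}$, since $\langle i+r\rangle-i\equiv r\pmod N$ and the phases involve the factor $2\pi q/N$. By $2\pi$-periodicity of the sine, the $i$th coupling sum therefore equals $\sum_{r=1}^{N-1}a_{i,\langle i+r\rangle}\sin(2\pi qr/N)=\Omega$. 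Since $\Omega\neq0$, Lemma \ref{lem:locked_profile_equivalence} gives the $q$-twisted dancing equilibrium. This is exactly the singleton-partition case of Theorem \ref{thm:main_equitable_partition_profiles}, and we may cite that theorem instead.

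Second, we identify the edgewise quantities. For $(j,i)\in\mathcal E_A$, the representative $d_{ij}\in(-\pi,\pi]$ of $\phi^{(q)}_j-\phi^{(q)}_i$ modulo $2\pi$ is by definition $\delta^{(q)}_{ij}$. Hence $\rho_{\phi^{(q)}}(A)=\rho_q(A)$. We also need $\mathcal E_A\neq\varnothing$ for the margin to be defined; this follows from strong connectivity when $N\geq3$. By the equivalent description of $\mathcal U_{\phi,\rho}$ recorded before the proof of Theorem \ref{thm:main_edgewise_stability}, the set $\mathcal U_{\phi^{(q)},\rho}$ coincides with $\mathcal U_{q,\rho}$.

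Finally, Theorem \ref{thm:main_edgewise_stability}, applied with strong connectivity and $\rho_q(A)>0$, yields three conclusions: local orbital exponential stability in quotient diameter, positive invariance of $\mathcal U_{q,\rho}$ for every $0<\rho<\rho_q(A)$, and containment of this set in the local basin. No step is a real obstacle. The only point needing care is the bookkeeping in the first step: the reindexing must be a bijection, and phase differences must be taken modulo $2\pi$ consistently with the definition of $\delta^{(q)}_{ij}$.
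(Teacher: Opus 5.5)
Your proposal is correct and follows the paper's own proof. The paper also (i) obtains the nonstationary locked profile $\left(\phi^{(q)},\Omega\right)$ from Theorem \ref{thm:main_equitable_partition_profiles} via the singleton partition, which your direct reindexing reproduces, (ii) identifies $\rho_q(A)=\rho_{\phi^{(q)}}(A)$ and $\mathcal U_{q,\rho}=\mathcal U_{\phi^{(q)},\rho}$, and (iii) concludes by Theorem \ref{thm:main_edgewise_stability}.
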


\begin{proof}
  For the singleton partition $C_i=\left\{i\right\}$, $i\in[N]$, structural equitability is automatic, while the displayed common coupling-sum condition is precisely the requirement that the partition be $q$-twisted state equitable.
  Theorem \ref{thm:main_equitable_partition_profiles} therefore gives the nonstationary locked profile $\left(\phi^{(q)},\Omega\right)$.
  The condition $\rho_q(A)>0$ places all edgewise phase differences in $\left(-\pi/2,\pi/2\right)$. For this profile,
  \begin{equation*}
    \rho_q(A)=\rho_{\phi^{(q)}}(A), \quad \mathcal U_{q,\rho}=\mathcal U_{\phi^{(q)},\rho}.
  \end{equation*}
  The conclusion follows from Theorem \ref{thm:main_edgewise_stability}.
\end{proof}

We now apply the edgewise criterion to the $q$-twisted profiles of the forward $m$-neighbor model and prove the low-winding stability corollary.
Recall that
\begin{equation*}
  q^\sharp=\min\left\{q,N-q\right\}.
\end{equation*}
This quantity is the smallest winding magnitude represented by $q$ modulo $N$ and determines the largest absolute edgewise phase difference generated by the forward interaction range.

\begin{proof}[\textbf{Proof of Corollary \ref{cor:main_low_winding_stability}}]
  Under condition \eqref{eq:q_twisted_low_winding_condition}, every forward shift $r\in[m]$ satisfies $0<2\pi q^\sharp r/N<\pi/2$.
  Since $m\geq1$, this condition also implies $q^\sharp<N/4$, and hence $q\neq N/2$.
  If $q<N/2$, then $q=q^\sharp$.
  If $q>N/2$, then $q\equiv-q^\sharp\pmod N$.
  Consequently, the edgewise phase differences are $2\pi q^\sharp r/N$, $r\in[m]$, or their negatives.
  Hence every edgewise phase difference lies in $\left(-\pi/2,\pi/2\right)$, and all terms in the common coupling sum have the same strict nonzero sign.
  Since the forward $m$-neighbor model is circulant, this coupling sum is the same at every oscillator.
  The profile therefore defines a $q$-twisted dancing equilibrium.
  The one-step edges form a directed cycle, so the associated digraph is strongly connected.
  The largest absolute edgewise phase difference is $2\pi m q^\sharp/N$, and therefore
  \begin{equation*}
    \rho_{\phi^{(q)}}(A)=\frac{\pi}{2}-\frac{2\pi m q^\sharp}{N}>0.
  \end{equation*}
  The conclusion follows from Corollary \ref{cor:q_twisted_edgewise_stability}.
\end{proof}

\subsection{Fourier criterion and general parameter ranges}\label{sec:fourier_criterion}

We first prove the Fourier-mode criterion.
The linearization at a $q$-twisted profile is circulant, so its quotient spectrum is determined by the nonzero DFT modes.

\begin{proof}[\textbf{Proof of Theorem \ref{thm:main_general_q_fourier}}]
  By Corollary \ref{cor:main_forward_q_existence}, the assumptions $N\nmid mq$ and $N\nmid\left(m+1\right)q$ imply that $\phi^{(q)}$ defines a $q$-twisted dancing equilibrium.
  In rotating coordinates, the linearized equation is $\dot u=L_{m,q}u$, where
  \begin{equation*}
    \left(L_{m,q}u\right)_i
    =K\sum_{r=1}^{m}\cos\left(\frac{2\pi q r}{N}\right)\left(u_{\langle i+r\rangle}-u_i\right).
  \end{equation*}
  By Lemma \ref{lem:forward_model_fourier_reduction}, the nonzero Fourier modes represent the quotient directions and $\operatorname{Re}\lambda_{\ell}^{(q)}=-K\Gamma_{N,m,q}^{(\ell)}$ for $\ell\in[N-1]$.
  If every nonzero-mode factor is positive, then every quotient eigenvalue has strictly negative real part, while the phase-shift mode $\ell=0$ is the only neutral mode.
  Lemma \ref{lem:quotient_linearized_criterion} therefore gives local orbital exponential stability.

  Suppose now that $\Gamma_{N,m,q}^{(\ell_\ast)}<0$ for some $\ell_\ast\in[N-1]$.
  The quotient eigenvalues on the associated real Fourier subspace have strictly positive real part, so Lemma \ref{lem:quotient_linearized_criterion} gives nonlinear instability in the quotient sense.
  We next establish the explicit escaping statement recorded in the remark following Theorem \ref{thm:main_general_q_fourier}.
  Define $w_{\ell_\ast}\in\mathbb R^N$ by $w_{\ell_\ast,i}:=\cos\left(2\pi\ell_\ast\left(i-1\right)/N\right)$ for $i\in[N]$.
  Lemma \ref{lem:finite_fourier_basis} gives $\sum_{i=1}^{N}w_{\ell_\ast,i}=0$, while $w_{\ell_\ast,1}=1$.
  Hence $w_{\ell_\ast}\notin\operatorname{span}\left\{\mathbf 1\right\}$ and $\widehat w_{\ell_\ast}:=\Pi\left(w_{\ell_\ast}\right)$ is a nonzero vector in the unstable real Fourier subspace.
  Let $E^{\mathrm{cs}}$ denote the center-stable spectral subspace of the quotient linearization.
  Then $\operatorname{span}\left\{\widehat w_{\ell_\ast}\right\}\cap E^{\mathrm{cs}}=\left\{0\right\}$.
  By the local center-stable manifold theorem \cite{H-P-S}, there exist a neighborhood $\mathcal V$ of the reduced equilibrium and a local center-stable manifold $\mathcal W_{\mathrm{loc}}^{\mathrm{cs}}$ such that every reduced solution remaining in $\mathcal V$ for all forward time starts in $\mathcal W_{\mathrm{loc}}^{\mathrm{cs}}$.
  Since this manifold is tangent to $E^{\mathrm{cs}}$, after shrinking $\mathcal V$ there exists $\varepsilon_1>0$ such that
  \begin{equation*}
    \varepsilon\widehat w_{\ell_\ast}\in\mathcal V\setminus\mathcal W_{\mathrm{loc}}^{\mathrm{cs}},\quad 0<\left|\varepsilon\right|<\varepsilon_1.
  \end{equation*}
  Choose $\eta>0$ such that the quotient ball of radius $\eta$ is contained in $\mathcal V$, and set
  \begin{equation*}
    \varepsilon_0:=\min\left\{\varepsilon_1,\frac{\eta}{2\left\|\widehat w_{\ell_\ast}\right\|_D}\right\}.
  \end{equation*}
  For $0<\left|\varepsilon\right|<\varepsilon_0$, take
  \begin{equation*}
    \theta_i(0)=\phi_i^{(q)}+\varepsilon\cos\left(\frac{2\pi\ell_\ast\left(i-1\right)}{N}\right),\quad i\in[N].
  \end{equation*}
  The corresponding quotient initial point is $\varepsilon\widehat w_{\ell_\ast}$, which lies inside the ball of radius $\eta$ but outside $\mathcal W_{\mathrm{loc}}^{\mathrm{cs}}$.
  The quotient trajectory therefore cannot remain in $\mathcal V$ for all forward time and hence cannot remain in this ball.
  By continuity, there exists $t_\varepsilon>0$ such that
  \begin{equation*}
    D\left(\theta(t_\varepsilon)-\Omega_{N,m,q}t_\varepsilon\mathbf 1-\phi^{(q)}\right)=\eta.
  \end{equation*}
  This establishes the explicit escaping statement recorded in the remark following Theorem \ref{thm:main_general_q_fourier}.
\end{proof}

We next establish the parameter estimates used in Corollary \ref{cor:main_general_q_parameter_ranges}.
The proof of the following lemma is deferred to Appendix \ref{sec:model_lemma_proofs}.

\begin{lemma}\label{lem:general_q_parameter_estimates}
  Let $q\in[N-1]$.
  \begin{enumerate}
    \item If $N\geq\frac{\pi}{2}\left(2m+1\right)q^\sharp$, then the existence conditions hold and $\Gamma_{N,m,q}^{(\ell)}>0$ for every $\ell\in[N-1]$.
    \item Suppose that $N\nmid mq$ and $N\nmid\left(m+1\right)q$. If $N\leq\frac{8m+3}{3}\gcd(N,q)$, then $\Gamma_{N,m,q}^{(q)}\leq-1/4$.
  \end{enumerate}
\end{lemma}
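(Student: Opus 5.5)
For part (1), I would first dispose of existence exactly as in the remark after Corollary \ref{cor:main_general_q_parameter_ranges}. Set $g_q:=\gcd(N,q)$ and $N_q:=N/g_q$. Then the hypothesis gives $N_q\geq\frac{\pi}{2}(2m+1)>m+1$. Since $\gcd(N_q,q/g_q)=1$, it follows that $N\nmid mq$ and $N\nmid(m+1)q$.

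For positivity, I would use that $\Gamma^{(\ell)}_{N,m,q}$ is unchanged under $q\mapsto N-q$, since cosine is even and $\cos(2\pi(N-q)r/N)=\cos(2\pi qr/N)$. So I may take $q=q^\sharp$ and put $x:=2\pi q^\sharp/N\leq 4/(2m+1)$.
\begin{itemize}
\item For $m=1$, one has $x\leq 4/3<\pi/2$. Then $\cos x>0$, and $1-\cos(2\pi\ell/N)>0$ for $\ell\in[N-1]$, so $\Gamma^{(\ell)}>0$.
\item For $m\geq2$, I would invoke Lemma \ref{lem:uniform_trigonometric_positivity}. Its intended content is that $\sum_{r=1}^m\cos(xr)(1-\cos(yr))>0$ for all $y\in(0,2\pi)$ whenever $0<x\leq 4/(2m+1)$; I would apply it with $y=2\pi\ell/N$.
\end{itemize}

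For part (2), I would write $c_r:=\cos(2\pi qr/N)$. Taking $\ell=q$ turns the mode factor into $\Gamma^{(q)}_{N,m,q}=\sum_{r=1}^m f(c_r)$ with $f(c):=c-c^2$. The sequence $c_r$ is $N_q$-periodic in $r$, and $N_q\geq3$ by existence. Over one full period, $\sum c_r=0$, and $\sum c_r^2=N_q/2$ because $2q/g_q\not\equiv0\pmod{N_q}$. Hence each full period contributes exactly $-N_q/2$.

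I would write $m=kN_q+s$. Existence forces $s\notin\{0,N_q-1\}$, so $1\leq s\leq N_q-2$. Then
\begin{equation*}
\Gamma^{(q)}_{N,m,q}=-\frac{kN_q}{2}+\sum_{r=1}^{s}f(c_r).
\end{equation*}
\begin{itemize}
\item If $k\geq1$, the bound $f\leq1/4$ gives $\Gamma\leq -N_q/2+(N_q-2)/4<-1/4$.
\item If $k=0$, so $m\leq N_q-2$, the hypothesis reads $3N_q\leq 8m+3$. I would use the closed forms
\begin{equation*}
\sum_{r=1}^m\cos(rx)=\frac{\sin((m+\tfrac12)x)}{2\sin(x/2)}-\frac12,
\end{equation*}
which come from Lemma \ref{lem:finite_trig_sums}, together with the analogous formula at $2x$. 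This gives
\begin{equation*}
\Gamma=S_1(m)-\frac{m}{2}-\frac{S_2(m)}{2},
\end{equation*}
where $S_1,S_2$ denote these two sums.
\end{itemize}

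In the case $k=0$ I would also use the reflection $c_r=c_{N_q-r}$ to rewrite $\Gamma$ as
\begin{equation*}
\Gamma=-\frac{N_q}{2}-\sum_{r=1}^{N_q-1-m}f(c_r),
\end{equation*}
which involves only $N_q-1-m\leq (5m)/3$ complementary terms. The task is then to show that the negative terms of $f$ among these cannot outweigh $N_q/2-1/4$. I would split the complementary terms according to whether $c_r\geq0$, where $f\geq -0$ is easy, or $c_r<0$. For the second group I would bound how many residues $rq'/N_q$ with $r\leq N_q-1-m$ can land near $\pi$, using that $r\mapsto rq'$ is a bijection mod $N_q$.

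This counting and balancing step, which must produce precisely the threshold $(8m+3)/3$ and the constant $-1/4$, is the main obstacle. If the counting does not close, my first fallback is to bound $|S_1|$ and $|S_2|$ via $1/\sin(x/2)$ with $x\geq 2\pi/N_q$ and check small $N_q$ separately.
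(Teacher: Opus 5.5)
Your part (1) and the case $k\geq1$ of part (2) are correct and match the paper's argument. The gap is the case $k=0$ of part (2). You leave it open, and neither route you sketch closes it.

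Your complement identity, combined with only the pointwise bound $f\geq-2$ on its $N_q-1-m$ terms, gives $\Gamma\leq 3N_q/2-2-2m$. This is $\leq-1/4$ only for $N_q\leq(8m+7)/6$, roughly half of the required range. Your fallback, bounding $S_1$ and $S_2$ separately through $1/(2\sin(x/2))$ and $1/(4\sin x)$, throws away the correlation between them. For $\widehat q=1$ these bounds add up to about $5N_q/(8\pi)$, which exceeds $m/2$ as soon as $N_q>4\pi m/5\approx2.51m$. So a window of length about $0.15m$ below the threshold $(8m+3)/3$ survives for every large $m$, and it cannot be handled as finitely many small cases. The residue-counting idea is not carried out, and it is not needed.

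The missing idea is to use the reflection $f(c_r)=f(c_{N_q-r})$ once more, so that $\Gamma$ itself reappears inside your complementary sum.

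\emph{Case $2m<N_q-1$.} Here $\{1,\dots,m\}\subset\{1,\dots,N_q-1-m\}$, and your identity becomes
\[
2\Gamma=-\frac{N_q}{2}-\sum_{r=m+1}^{N_q-1-m}f(c_r)\leq-\frac{N_q}{2}+2\,(N_q-1-2m).
\]
Hence $\Gamma\leq 3N_q/4-1-2m\leq-1/4$, and the last step is exactly the hypothesis $3N_q\leq8m+3$.

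\emph{Case $2m\geq N_q-1$.} The same manipulation gives $2\Gamma=-N_q/2+\sum_{r=N_q-m}^{m}f(c_r)\leq-N_q/2+(2m-N_q+1)/4$. Since $m\leq N_q-2$, this yields $\Gamma\leq-(N_q+3)/8\leq-3/4$.

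The paper organizes the same idea through half-period sums. It uses $\sum_{r=1}^{M}h_r=-N_q/4$ for $N_q=2M+1$, and $\sum_{r=1}^{M}h_r=-N_q/4-1$ for $N_q=2M$, where $\widehat q$ is odd and so $h_M=-2$. When $m<M$ it bounds only the $M-m$ terms between $m$ and $M$ from below by $-2$; when $m\geq M$ it bounds the $m-M$ excess terms from above by $1/4$. Halving the number of terms that receive the crude bound $f\geq-2$ is what produces both the threshold $(8m+3)/3$ and the constant $-1/4$.
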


\begin{proof}[\textbf{Proof of Corollary \ref{cor:main_general_q_parameter_ranges}}]
  Under the condition in part (1), Lemma \ref{lem:general_q_parameter_estimates} gives the existence conditions and positivity of every nonzero-mode factor.
  Theorem \ref{thm:main_general_q_fourier} therefore gives local orbital exponential stability.
  Under the assumptions of part (2), the same lemma gives a negative factor in the mode $\ell=q$.
  The theorem then gives nonlinear instability in the quotient sense.
\end{proof}

\subsection{Refined classification of the 1-twisted branch}\label{sec:q1_three_zone}

We now specialize to the $1$-twisted branch. Corollary \ref{cor:main_forward_q_existence} shows that this branch exists precisely when $m\in[N-2]$.
Throughout the remainder of this subsection, we impose this condition and use the profile $\phi^{(1)}$ defined in \eqref{eq:q_twisted_profile_definition}.
Its common angular velocity is
\begin{equation*}
  \Omega_{N,m,1}=K\sum_{r=1}^{m}\sin\left(\frac{2\pi r}{N}\right)>0.
\end{equation*}
We first record the Fourier symbol of the quotient linearization at this profile. The linearized equation is
\begin{equation*}
  \dot u_i=K\sum_{r=1}^{m}\cos\left(\frac{2\pi r}{N}\right)\left(u_{\langle i+r\rangle}-u_i\right).
\end{equation*}
By Lemma \ref{lem:circulant_diagonalization}, the Fourier mode $v^{(\ell)}$ has eigenvalue
\begin{equation*}
  \lambda_{\ell}^{(1)}=K\sum_{r=1}^{m}\cos\left(\frac{2\pi r}{N}\right)\left(\omega_N^{\ell r}-1\right),\quad \ell=0,1,\dots,N-1.
\end{equation*}
The mode $\ell=0$ is the neutral phase-shift mode. For every nonzero mode,
\begin{equation*}
  \operatorname{Re}\lambda_{\ell}^{(1)}=-K\Gamma_{N,m,1}^{(\ell)},
\end{equation*}
where $\Gamma_{N,m,1}^{(\ell)}$ is given explicitly in \eqref{eq:gamma_N_m_1_ell_definition}.
Thus the quotient spectrum is described by explicit scalar trigonometric sums.
The first Fourier mode provides the sign estimates used below.
It satisfies
\begin{equation}\label{eq:q1_first_mode_gamma_closed_form}
  \Gamma_{N,m,1}^{(1)}
  =\frac{\sin\left(\frac{m\pi}{N}\right)\cos\left(\frac{\left(m+1\right)\pi}{N}\right)}{\sin\left(\frac{\pi}{N}\right)}
  -\frac{\sin\left(\frac{2m\pi}{N}\right)\cos\left(\frac{2\left(m+1\right)\pi}{N}\right)}{2\sin\left(\frac{2\pi}{N}\right)}-\frac{m}{2}.
\end{equation}
Indeed, set $x:=2\pi/N$. Then
\begin{equation*}
  \Gamma_{N,m,1}^{(1)}=\sum_{r=1}^{m}\cos(rx)-\sum_{r=1}^{m}\cos^2(rx).
\end{equation*}
By Lemma \ref{lem:finite_trig_sums},
\begin{equation*}
  \sum_{r=1}^{m}\cos(rx)=\frac{\sin(mx/2)\cos\left(\left(m+1\right)x/2\right)}{\sin(x/2)}.
\end{equation*}
Also,
\begin{equation*}
  \sum_{r=1}^{m}\cos^2(rx)
  =\frac{m}{2}+\frac{1}{2}\sum_{r=1}^{m}\cos(2rx)
  =\frac{m}{2}+\frac{\sin(mx)\cos\left(\left(m+1\right)x\right)}{2\sin x},
\end{equation*}
again by Lemma \ref{lem:finite_trig_sums}. Substituting $x=2\pi/N$ yields \eqref{eq:q1_first_mode_gamma_closed_form}.
Equivalently, if $n:=2m+1$, $y:=\pi/N$, and $\upsilon:=ny$, then
\begin{equation}\label{eq:q1_first_mode_gamma_closed_form_ny}
  4\Gamma_{N,m,1}^{(1)}
  =\frac{\sin\upsilon}{\sin y}\left(2-\frac{\cos\upsilon}{\cos y}\right)-n.
\end{equation}
The proofs of the four lemmas stated below are deferred to Appendix \ref{sec:model_lemma_proofs}.
We first extract two explicit sign ranges from the first-mode formula.

\begin{lemma}\label{lem:q1_first_mode_positive_range}
  Let $N\geq3$ and $m\in[N-2]$.
  If $N\geq3m+2$, then $\Gamma_{N,m,1}^{(1)}>0$.
\end{lemma}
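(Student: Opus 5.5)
The plan is to work from the closed form \eqref{eq:q1_first_mode_gamma_closed_form_ny}. Put $n=2m+1$, $y=\pi/N$ and $\upsilon=ny$. The hypothesis $N\geq3m+2$ gives
\begin{equation*}
  \upsilon\leq\frac{(2m+1)\pi}{3m+2}<\frac{2\pi}{3},
\end{equation*}
so $\upsilon$ lies in a range where $\sin\upsilon>0$. Since $\cos y>0$, multiplying \eqref{eq:q1_first_mode_gamma_closed_form_ny} by $\sin y\cos y$ shows that $\Gamma_{N,m,1}^{(1)}>0$ is equivalent to
\begin{equation*}
  \sin\upsilon\left(2\cos y-\cos\upsilon\right)>\frac{n}{2}\sin(2y).
\end{equation*}

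I would compare this with the limiting function from the remark after Corollary \ref{cor:main_q1_classification}, namely
\begin{equation*}
  H(\upsilon):=\sin\upsilon\left(2-\cos\upsilon\right)-\upsilon .
\end{equation*}
Its derivative is $H'(\upsilon)=2\cos\upsilon\left(1-\cos\upsilon\right)$, and $H(0)=0$. So $H$ increases on $[0,\pi/2]$ and decreases on $[\pi/2,2\pi/3]$. Near the origin $H(\upsilon)\sim\upsilon^3/3$, and at the right end $H(2\pi/3)=\tfrac{5\sqrt3}{4}-\tfrac{2\pi}{3}>0$. Hence $H>0$ on $(0,2\pi/3]$.

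To pass from $H$ to the finite-size inequality, I would use $\cos y\geq1-y^2/2$ and $\tfrac n2\sin 2y\leq ny=\upsilon$. This reduces the claim to the sufficient condition
\begin{equation*}
  H(\upsilon)>y^2\sin\upsilon,\qquad y=\frac{\upsilon}{n}\leq\frac{\pi}{3m+2}.
\end{equation*}
The condition is checked in two zones.
\begin{itemize}
  \item For small $\upsilon$, the bound $y\leq\upsilon/n$ gives $y^2\sin\upsilon\leq\upsilon^3/n^2$. Against $H\gtrsim\upsilon^3/3$ this suffices once $n^2>3$, but it needs a quantitative lower bound such as $H(\upsilon)\geq c\,\upsilon^3$ on $[0,\pi/2]$, derived from $H'$.
  \item Near $\upsilon\approx2\pi/3$, $H$ is only bounded below by about $0.07$. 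There the right-hand side is at most $(\pi/(3m+2))^2$, which falls below $0.07$ once $m$ is moderately large (roughly $m\geq4$).
\end{itemize}

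The main obstacle is that the margin is thin: the integral of $\cos\theta(1-\cos\theta)$ over $[0,2\pi/3]$ is only about $0.035$. The crude correction above genuinely fails for $m=1$: at $N=5$ one gets $H\approx0.31$ against $y^2\sin\upsilon\approx0.375$. So the small values of $m$ must be treated separately.
\begin{itemize}
  \item For $m=1$, the factor is $\Gamma_{N,1,1}^{(1)}=\cos x(1-\cos x)$ with $x=2\pi/N<\pi/2$, because $N\geq5$. It is therefore positive.
  \item For $m=2$ and $m=3$ (and any further $m$ below the threshold where the second zone works), I would keep $2\cos y$ exactly rather than bounding it. I would also use the sharper $\tfrac n2\sin 2y\leq\upsilon\left(1-\tfrac{2y^2}{3}\right)$, and verify the resulting one-variable inequality on the finite range $y\leq\pi/(3m+2)$, checking monotonicity in $N$ for each fixed $m$.
\end{itemize}
If the bounds remain too tight for a given $m$, a fallback is a direct pairing of the terms in $\sum_r\cos(rx)(1-\cos(rx))$. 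The negative terms are those with $rx\in(\pi/2,2\pi/3)$, and each would be paired with a positive term near $rx\approx\pi/3$, where $\cos\theta(1-\cos\theta)$ attains its maximum $1/4$.
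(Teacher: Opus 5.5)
Your equivalent inequality and the reduction to $H(\upsilon)>y^2\sin\upsilon$ are correct, and so is your $m=1$ case, but the proposal stops short of a proof. In the first zone you never establish the constant $c$. The threshold ``$n^2>3$'' only reflects the behaviour as $\upsilon\to0$: $H(\upsilon)/\upsilon^3$ drops to about $0.11$ at $\upsilon=\pi/2$, so the comparison needs $n\geq5$. In the second zone, the cases $m=2,3$ are left to an unspecified one-variable check or a pairing fallback. These holes can be filled. The relevant pairs are only $N\in\{8,9\}$ for $m=2$ and $N\in\{11,12,13\}$ for $m=3$, and your sufficient condition does hold at each of them on direct evaluation. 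More to the point, the ``thin margin'' you are fighting comes from your own estimate. In $\sin\upsilon(2\cos y-\cos\upsilon)>n\sin y\cos y$ the factor $\cos y$ appears on both sides. You bound it below by $1-y^2/2$ on the left and above by $1$ on the right, so you lose on both sides.

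The missing idea is the paper's split. First, the termwise argument you used for $m=1$ works for every $m$ once $N\geq4m+1$, because then $0<2\pi r/N<\pi/2$ for all $r\in[m]$. This covers your entire first zone, since $\upsilon\leq\pi/2$ means $N\geq4m+2$. Second, in the remaining range $3m+2\leq N\leq4m$ one has $\pi/2<\upsilon<2\pi/3$, hence $\cos\upsilon<0$. So instead of estimating $\cos y$, divide by it in \eqref{eq:q1_first_mode_gamma_closed_form_ny}: this gives $2-\cos\upsilon/\cos y>2-\cos\upsilon$. Together with $\sin y<y=\upsilon/n$ it yields
\[
4\Gamma_{N,m,1}^{(1)}>\frac{n}{\upsilon}\left(\sin\upsilon\left(2-\cos\upsilon\right)-\upsilon\right)=\frac{n}{\upsilon}H(\upsilon)>0,
\]
using only the facts about $H$ on $[\pi/2,2\pi/3]$ that you already proved. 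This route has no error term, no small-$m$ exceptions, and no numerical checking.
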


\begin{lemma}\label{lem:q1_first_mode_negative_range}
  Let $m\geq2$.
  If $2m+2\leq N\leq2.9m$, then $\Gamma_{N,m,1}^{(1)}<0$.
\end{lemma}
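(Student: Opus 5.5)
The plan is to work with the closed form \eqref{eq:q1_first_mode_gamma_closed_form_ny}. Set $n:=2m+1$, $y:=\pi/N$ and $\upsilon:=ny$. The hypothesis $2m+2\leq N\leq 2.9m$ translates into
\begin{equation*}
  \frac{2\pi}{2.9}<\frac{2\pi}{2.9}+\frac{\pi}{2.9m}=\frac{(2m+1)\pi}{2.9m}\leq\upsilon\leq\frac{(2m+1)\pi}{2m+2}<\pi .
\end{equation*}
So $\upsilon$ lies in $(2\pi/3,\pi)$, where $\sin\upsilon>0$ and $\cos\upsilon<0$. Also $0<y\leq\pi/(2m+2)<\pi/2$, so $\sin y\cos y>0$. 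Multiplying \eqref{eq:q1_first_mode_gamma_closed_form_ny} by $\sin y\cos y$, the claim becomes
\begin{equation*}
  \sin\upsilon\left(2\cos y-\cos\upsilon\right)<\frac{n}{2}\sin(2y).
\end{equation*}

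I would bound each side by its $y\to0$ limit plus an explicit error. For the left-hand side, $\cos y<1$ and $\sin\upsilon>0$ give the upper bound $\sin\upsilon(2-\cos\upsilon)$. For the right-hand side, $\sin t\geq t-t^3/6$ gives
\begin{equation*}
  \frac{n}{2}\sin(2y)\geq ny-\frac{2}{3}ny^3=\upsilon\left(1-\frac{2}{3}y^2\right).
\end{equation*}
Writing $f(x):=\sin x(2-\cos x)-x$, it therefore suffices to show
\begin{equation*}
  f(\upsilon)<-\frac{2}{3}\upsilon y^2 .
\end{equation*}
As noted in the remark after Corollary \ref{cor:main_q1_classification}, $f'(x)=2\cos x(1-\cos x)<0$ on $(2\pi/3,\pi)$. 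Hence $f(\upsilon)\leq f\bigl((2m+1)\pi/(2.9m)\bigr)$, and this is at most $f(2\pi/2.9)\approx-0.047$, with a further gain of order $1/m$ from the shift $\pi/(2.9m)$. On the other side, $\frac{2}{3}\upsilon y^2\leq\frac{2\pi}{3}\cdot\frac{\pi^2}{(2m+2)^2}=O(m^{-2})$. Thus the inequality holds for all $m\geq m_0$, with an explicit $m_0$ obtained from the numerical value of $f$ at the left endpoint, or better from a linear bound on $f$ in the shift.

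The main obstacle is that the margin $|f(2\pi/2.9)|$ is small. This is because $2.9$ is close to the limiting threshold $c_\ast\approx2.937$, so the crude estimate only works for moderately large $m$. I would handle this in three steps.

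\begin{enumerate}
  \item Exploit the $\pi/(2.9m)$ shift. The derivative bound gives $f(\upsilon)\leq f(2\pi/2.9)-c\,\pi/(2.9m)$ with $c:=\min|f'|$ on the relevant interval, which is bounded below since $\cos x\leq\cos(2\pi/2.9)<0$ there. The resulting negative term of order $1/m$ dominates the $O(m^{-2})$ error already for small $m$.
  \item If this is still insufficient, sharpen the left-hand bound by keeping $2\sin\upsilon(\cos y-1)\leq-\sin\upsilon\,y^2+\sin\upsilon\, y^4/12$. This term is itself negative of order $y^2$ and partially cancels the error term.
  \item The remaining finitely many pairs are checked directly. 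The range $2m+2\leq N\leq2.9m$ is empty for $m=2$, so the first admissible pair is $(N,m)=(8,3)$. For each remaining pair, $\Gamma_{N,m,1}^{(1)}$ is evaluated from \eqref{eq:q1_first_mode_gamma_closed_form} and shown to be negative.
\end{enumerate}
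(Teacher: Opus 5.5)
Your proof is correct, and it takes a genuinely different route from the paper's. Both arguments start from \eqref{eq:q1_first_mode_gamma_closed_form_ny} and compare against the same point, $2\pi/2.9=20\pi/29$.

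The paper keeps the quotient form and absorbs the $y$-dependence multiplicatively, using constants $y/\sin y\leq\alpha_0$ and $1/\cos y\leq\beta_0$. These are valid only for $y\leq\pi/12$, that is, $m\geq5$. This forces several extra pieces:
a separate monotonicity argument for $\frac{\sin x}{x}(2-\beta\cos x)$ with $1<\beta<2$;
a very thin final rational margin ($11066031/11072000<1$), because it uses only $\upsilon>20\pi/29$;
and exact evaluations of the pairs $(N,m)=(8,3),(10,4),(11,4)$.

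You instead clear denominators, so the $y$-corrections become an additive error $\frac{2}{3}\upsilon y^2\leq\pi^3/(6(m+1)^2)$ around the $\beta=1$ function $f(x)=\sin x(2-\cos x)-x$. You also keep the shift $\pi/(2.9m)$ that the paper discards. That $O(1/m)$ gain is what makes your argument uniform and gives it comfortable slack.

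In fact your Step 1 already finishes the proof for every $m\geq3$, so Steps 2 and 3 are unnecessary.
\begin{itemize}
\item On $[2\pi/3,\pi]$ one has $-\cos x\geq1/2$, hence $|f'(x)|=2(-\cos x)(1-\cos x)\geq3/2$. The mean value theorem on $[20\pi/29,\upsilon]$ then gives $f(\upsilon)\leq f(20\pi/29)-3\pi/(5.8m)$.
\item The requirement $3\pi/(5.8m)\geq\pi^3/(6(m+1)^2)$ is equivalent to $9(m+1)^2\geq2.9\pi^2m$. This holds for all $m\geq1$, since $9m^2-(2.9\pi^2-18)m+9$ has negative discriminant.
\item The only remaining numerical input is $f(20\pi/29)<0$. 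With $z:=9\pi/29$, the bounds $\sin z<\frac{17}{20}z$, $\cos z<\frac{563}{1000}$, and $333/106<\pi<355/113$ (the same ones the paper records) give $\sin z(2+\cos z)<2.125<2.166<20\pi/29$.
\end{itemize}
So your route replaces the paper's tight constant-chasing and small-case checks with a single mean-value step.
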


The next two lemmas provide the key simplifications in the intermediate range.
The first shows that the mode factors are nonnegative for all $2\leq\ell\leq N-2$ and are strictly positive whenever the first-mode factor is positive.
The second identifies $\left(N,m\right)=\left(4,1\right)$ as the unique case in which the first-mode factor vanishes.
Thus, outside this exceptional case, the intermediate spectral alternatives are determined by the sign of $\Gamma_{N,m,1}^{(1)}$.

\begin{lemma}\label{lem:q1_intermediate_sign_reduction}
  Let $N\geq3$ and $m\in[N-2]$ satisfy $2m+2\leq N\leq4m$.
  Then, for every $2\leq\ell\leq N-2$,
  \begin{equation}\label{eq:q1_intermediate_complement_formula}
    \Gamma_{N,m,1}^{(\ell)}
    =\frac{1}{2}\sum_{r=m+1}^{N-m-1}\left(-\cos\left(\frac{2\pi r}{N}\right)\right)\left(1-\cos\left(\frac{2\pi\ell r}{N}\right)\right)
    \geq0.
  \end{equation}
  Moreover, if $\Gamma_{N,m,1}^{(1)}>0$, then $\Gamma_{N,m,1}^{(\ell)}>0$ for every $2\leq\ell\leq N-2$.
\end{lemma}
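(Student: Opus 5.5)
The plan is to derive the complement formula \eqref{eq:q1_intermediate_complement_formula} from the vanishing of a full-period trigonometric sum, and then to read off nonnegativity term by term from the location of the complementary window of indices. Throughout, set $x:=2\pi/N$ and
\begin{equation*}
  g_\ell(r):=\cos(rx)\left(1-\cos(\ell rx)\right),\quad r\in\{0,1,\dots,N-1\}.
\end{equation*}

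\emph{Step 1 (full-period sum).} Using $\cos(rx)\cos(\ell rx)=\tfrac12\left[\cos((\ell+1)rx)+\cos((\ell-1)rx)\right]$, I write
\begin{equation*}
  \sum_{r=0}^{N-1}g_\ell(r)=\sum_{r=0}^{N-1}\cos(rx)-\tfrac12\sum_{r=0}^{N-1}\cos((\ell+1)rx)-\tfrac12\sum_{r=0}^{N-1}\cos((\ell-1)rx).
\end{equation*}
By the orthogonality relation for complete sums of roots of unity (Lemma \ref{lem:finite_fourier_basis}), $\sum_{r=0}^{N-1}\cos(krx)=0$ whenever $N\nmid k$. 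For $2\leq\ell\leq N-2$, none of $1$, $\ell+1$, $\ell-1$ is divisible by $N$, so the full sum vanishes.

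\emph{Step 2 (splitting the sum).} Each $g_\ell(r)$ depends only on $\cos(rx)$ and $\cos(\ell rx)$, so $g_\ell(N-r)=g_\ell(r)$, and $g_\ell(0)=0$. The hypothesis $N\geq2m+2$ gives $m<N-m$, so the index set splits into the disjoint blocks
\begin{equation*}
  \{1,\dots,m\},\quad \{m+1,\dots,N-m-1\},\quad \{N-m,\dots,N-1\}.
\end{equation*}
The first and third blocks each contribute $\Gamma^{(\ell)}_{N,m,1}$, and the middle block is nonempty. Hence
\begin{equation*}
  0=2\Gamma^{(\ell)}_{N,m,1}+\sum_{r=m+1}^{N-m-1}g_\ell(r),
\end{equation*}
which is exactly \eqref{eq:q1_intermediate_complement_formula}.

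\emph{Step 3 (nonnegativity).} The hypothesis $N\leq4m$ gives $m+1>N/4$, and then $N-m-1<3N/4$. So every middle index satisfies $N/4<r<3N/4$, hence $\cos(rx)<0$ strictly. Since also $1-\cos(\ell rx)\geq0$, every term in \eqref{eq:q1_intermediate_complement_formula} is nonnegative, and so $\Gamma^{(\ell)}_{N,m,1}\geq0$.

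\emph{Step 4 (strict positivity; the main obstacle).} Because every middle coefficient $-\cos(rx)$ is strictly positive, $\Gamma^{(\ell)}_{N,m,1}>0$ holds as soon as some middle index $r$ has $N\nmid\ell r$.
\begin{itemize}
  \item If the middle block contains two consecutive integers $r,r+1$, then $N\mid\ell r$ and $N\mid\ell(r+1)$ would force $N\mid\ell$, which is impossible for $2\leq\ell\leq N-2$. So strict positivity follows.
  \item The middle block is a single index only when $N=2m+2$, with middle index $r=m+1=N/2$. Then the formula gives $\Gamma^{(\ell)}_{N,m,1}=\tfrac12\left(1-(-1)^\ell\right)$, which vanishes for even $\ell$. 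This case must be ruled out using $\Gamma^{(1)}_{N,m,1}>0$.
\end{itemize}

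\emph{Step 5 (excluding $N=2m+2$).} I will redo the splitting argument of Steps 1 and 2 with $\ell=1$. Now
\begin{equation*}
  \sum_{r=0}^{N-1}g_1(r)=\sum_{r=0}^{N-1}\cos(rx)-\sum_{r=0}^{N-1}\cos^2(rx)=0-\tfrac N2=-\tfrac N2,
\end{equation*}
so $2\Gamma^{(1)}_{N,m,1}+\sum_{r=m+1}^{N-m-1}g_1(r)=-N/2$. For $N=2m+2$, the middle block is $\{N/2\}$ and $g_1(N/2)=(-1)(1-(-1))=-2$. This yields
\begin{equation*}
  2\Gamma^{(1)}_{N,m,1}=-\tfrac{N}{2}+2=1-m\leq0.
\end{equation*}
This contradicts $\Gamma^{(1)}_{N,m,1}>0$, so under that hypothesis $N\geq2m+3$. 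The middle block then has at least two elements, and Step 4 gives $\Gamma^{(\ell)}_{N,m,1}>0$ for every $2\leq\ell\leq N-2$.
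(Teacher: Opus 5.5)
Your proposal is correct and follows the paper's proof essentially step for step. Both arguments obtain full-period cancellation via the product-to-sum identity and Fourier orthogonality, pair $r\leftrightarrow N-r$ to get the complement formula, and use $N/4<r<3N/4$ on the middle window for the sign of $-\cos(2\pi r/N)$. Both then prove strictness with the consecutive-index argument, and exclude $N=2m+2$ through the $\ell=1$ full sum, which gives $\Gamma^{(1)}_{N,m,1}=1-N/4\leq0$. The only cosmetic differences are that you sum from $r=0$, so the complete sums vanish rather than equal $-1$, and that you record the explicit value $\frac12\left(1-(-1)^\ell\right)$, which shows why the one-index case must be excluded.
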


\begin{lemma}\label{lem:q1_first_mode_zero_41}
  Let $N\geq3$ and $m\in[N-2]$.
  Then $\Gamma_{N,m,1}^{(1)}=0$ if and only if $\left(N,m\right)=\left(4,1\right)$.
\end{lemma}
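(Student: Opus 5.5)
The proof splits according to the position of $N$ relative to $m$, so that Lemma~\ref{lem:q1_first_mode_positive_range} and Lemma~\ref{lem:q1_first_mode_negative_range} handle most of the parameter range. The two residual regimes, $N\le 2m+1$ and the narrow window $2.9m<N\le 3m+1$, are treated separately. Throughout, write $x:=2\pi/N$, $c_r:=\cos(rx)$ and $f(r):=c_r(1-c_r)$, so that $\Gamma_{N,m,1}^{(1)}=\sum_{r=1}^m f(r)$. The sufficiency direction is immediate: for $(N,m)=(4,1)$ one has $\Gamma_{4,1,1}^{(1)}=\cos(\pi/2)\bigl(1-\cos(\pi/2)\bigr)=0$.

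\emph{Step 1: the case $m=1$.} Here $\Gamma_{N,1,1}^{(1)}=\cos x\,(1-\cos x)$. Since $N\ge3$, we have $\cos x\neq1$, so the factor vanishes iff $\cos(2\pi/N)=0$, i.e.\ iff $N=4$. From now on let $m\ge2$.

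\emph{Step 2: the regime $N\le 2m+1$.} The sum of $f$ over all residues can be computed with Lemma~\ref{lem:finite_trig_sums}:
\begin{equation*}
  \sum_{r=1}^{N-1}f(r)=\sum_{r=1}^{N-1}c_r-\sum_{r=1}^{N-1}c_r^2=-1-\Bigl(\frac N2-1\Bigr)=-\frac N2 .
\end{equation*}
This uses $\sum_{r=0}^{N-1}c_r^2=N/2$ for $N\ge3$. Since $f(r)=f(N-r)$, doubling gives
\begin{equation*}
  2\Gamma_{N,m,1}^{(1)}=\sum_{r=1}^{m}f(r)+\sum_{r=N-m}^{N-1}f(r).
\end{equation*}
When $N\le 2m+1$, the two index ranges cover $[N-1]$, and they overlap exactly on $N-m\le r\le m$. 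This overlap contains $2m-N+1\le N-3$ indices, the last bound coming from $m\le N-2$. Using $f\le 1/4$ on the overlap,
\begin{equation*}
  2\Gamma_{N,m,1}^{(1)}=-\frac N2+\sum_{r=N-m}^{m}f(r)\le-\frac N2+\frac{N-3}{4}<0 .
\end{equation*}

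\emph{Step 3: the regime $N\ge 2m+2$ with $m\ge2$.} Lemma~\ref{lem:q1_first_mode_positive_range} gives $\Gamma>0$ for $N\ge3m+2$. Lemma~\ref{lem:q1_first_mode_negative_range} gives $\Gamma<0$ for $2m+2\le N\le2.9m$. It remains to exclude an exact zero for integers $N$ with $2.9m<N\le 3m+1$. This is the main obstacle, since the sign genuinely changes near $N/m\approx2.937$ and analytic estimates alone cannot rule out an accidental zero.

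For this window I would use an algebraic argument. With $\zeta:=\omega_N$ and $c_r^2=(1+c_{2r})/2$, the equation $\Gamma=0$ becomes the cyclotomic identity
\begin{equation*}
  \sum_{r=1}^{m}\bigl(2\zeta^{r}+2\zeta^{-r}-\zeta^{2r}-\zeta^{-2r}\bigr)=2m .
\end{equation*}
Apply the trace $\operatorname{Tr}_{\mathbb Q(\zeta)/\mathbb Q}$, using
\begin{equation*}
  \operatorname{Tr}(\zeta^k)=\mu(N/d)\,\frac{\varphi(N)}{\varphi(N/d)},\qquad d=\gcd(k,N).
\end{equation*}
The right side then becomes $2m\varphi(N)$. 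The left side has total coefficient weight $6m$, and each term is bounded by $\varphi(N)/\varphi(N/d)$. The key counting step is to show that the exponents $k\in\{\pm r,\pm2r\}$ with $N/\gcd(k,N)\le 6$ (the only ones with $1/\varphi(N/d)\ge1/3$) are rare in the window. Such $k$ are multiples of $N/6$, and since $N>2.9m$ there are only $O(1)$ of them. Then the weighted sum of $1/\varphi(N/d)$ stays strictly below $2m$, which forces $\Gamma\neq0$.

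If the counting becomes too delicate for small $m$, the fallback is different. For the finitely many small $m$ where the $O(1)$ exceptional terms are not dominated, I would verify $\Gamma\neq0$ directly via the closed form \eqref{eq:q1_first_mode_gamma_closed_form_ny}. For large $m$, I would use the monotonicity of its limiting profile, as in the remark after Corollary~\ref{cor:main_q1_classification}, together with an explicit error bound to separate each integer $N$ in the window from the zero.
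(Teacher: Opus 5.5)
Your proof is correct in its main line, but in the critical window it takes a genuinely different route from the paper. The paper does not use Lemma~\ref{lem:q1_first_mode_negative_range} here. For $N\le 2m+1$ it argues via the closed form for $4\Phi_m$; your full-cycle sum $-N/2$ plus the overlap bound $f\le\frac14$ is equally valid there. It then excludes $N\ge 3m+2$ and treats the whole range $2m+2\le N\le 3m+1$ at once. It proves that every nontrivial conjugate value $\Phi_m(2\pi k/N)$, $2\le k\le\lfloor N/2\rfloor$, is strictly negative. Hence, after removing the common root $2$, the integer polynomials $\mathcal H_m$ and $\mathcal K_N$ share only the root $2\cos(2\pi/N)$. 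That root is therefore rational, hence an integer, which forces $N=6$, and $\Gamma_{6,2,1}^{(1)}=-\frac12$ gives the contradiction. Your trace argument uses only the \emph{average} over the conjugates, so it needs no sign information on individual conjugates. It needs only $\operatorname{Tr}(\zeta^k)=\mu(M)\varphi(N)/\varphi(M)$, with $M$ the order of $\zeta^k$, and a count of small orders. The price is a residual finite check, whereas the paper's argument is uniform and computation-free.

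Two remarks on execution. First, exponents with $\varphi(N/\gcd(k,N))\le 2$ are multiples of $N/4$ or of $N/6$, not only of $N/6$. This does not affect your $O(1)$ count. Second, you can make the count effective and the finite check tiny by keeping signs. Write $M_r$ and $M_r'$ for the orders of $\zeta^r$ and $\zeta^{2r}$. The traced identity is
\begin{equation*}
  \sum_{r=1}^{m}\left(\frac{4\mu(M_r)}{\varphi(M_r)}-\frac{2\mu(M_r')}{\varphi(M_r')}\right)=2m .
\end{equation*}
For $N\ge 2m+2$ one has $M_r\ge 3$. Separating the cases $M_r$ odd, $M_r\equiv 2\pmod 4$, and $4\mid M_r$ shows that each summand is at most $\frac32$. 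The only exceptions are $M_r=4$ and $M_r=6$, with values $2$ and $3$, and each occurs for at most one $r<N/2$. Thus the left side is at most $\frac32 m+2<2m$ for every $m\ge 5$ and every $N\ge 2m+2$. For $m\ge5$ this even makes both range lemmas unnecessary.

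The six window pairs $(N,m)=(6,2),(7,2),(9,3),(10,3),(12,4),(13,4)$ give the exact values $2,-\frac23,-1,\frac52,3,-\frac23$, none equal to $2m$. You should drop the analytic large-$m$ fallback. As you note yourself, error bounds around the limiting profile cannot exclude an exact zero when $N/m$ lies arbitrarily close to $c_\ast$. It is also unnecessary, since the trace bound already covers all large $m$.
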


The preceding lemmas reduce the remaining stability question for the $1$-twisted branch to the first-mode factor, with its unique zero handled separately.
We now combine these estimates with the low-winding and DFT criteria.

\begin{proof}[\textbf{Proof of Corollary \ref{cor:main_q1_classification}}]
  Suppose first that $N\geq3m+2$.
  If $N\geq4m+1$, then $4m<N$, so Corollary \ref{cor:main_low_winding_stability}, with $q=1$, gives local orbital exponential stability together with the explicit positively invariant sets stated there.
  It remains to prove the stability assertion in the range $3m+2\leq N\leq4m$.
  Lemma \ref{lem:q1_first_mode_positive_range} gives $\Gamma_{N,m,1}^{(1)}>0$.
  Lemma \ref{lem:q1_intermediate_sign_reduction} gives $\Gamma_{N,m,1}^{(\ell)}>0$ for $2\leq\ell\leq N-2$.
  By symmetry, $\Gamma_{N,m,1}^{(N-1)}=\Gamma_{N,m,1}^{(1)}>0$.
  Hence $\Gamma_{N,m,1}^{(\ell)}>0$ for every $\ell\in[N-1]$, and Theorem \ref{thm:main_general_q_fourier} gives local orbital exponential stability.

  We next prove part (2).
  Suppose that $3\leq N\leq2.9m$.
  These inequalities imply $m\geq2$.
  If $N\leq2m+1$, then
  \begin{equation*}
    N\leq2m+1<\frac{8m+3}{3},
  \end{equation*}
  so Corollary \ref{cor:main_general_q_parameter_ranges} gives nonlinear instability.
  If instead $N\geq2m+2$, Lemma \ref{lem:q1_first_mode_negative_range} gives $\Gamma_{N,m,1}^{(1)}<0$, and Theorem \ref{thm:main_general_q_fourier} gives nonlinear instability in the quotient sense.

  Finally, suppose that $2.9m<N\leq3m+1$.
  If $m=1$, then $N\in\left\{3,4\right\}$.
  For $N=3$, one has
  \begin{equation*}
    N=3\leq\frac{11}{3}=\frac{8m+3}{3}\gcd(N,1),
  \end{equation*}
  so Corollary \ref{cor:main_general_q_parameter_ranges} gives nonlinear instability in the quotient sense.
  For $N=4$, Lemma \ref{lem:q1_first_mode_zero_41} gives $\Gamma_{4,1,1}^{(1)}=0$, and nonlinear instability follows from \cite[Sec. 4.1]{H-K}.

  It remains to consider $m\geq2$.
  Since $N$ is an integer, the inequalities $2.9m<N\leq3m+1$ imply $2m+2\leq N\leq4m$.
  If $\Gamma_{N,m,1}^{(1)}>0$, Lemma \ref{lem:q1_intermediate_sign_reduction} gives $\Gamma_{N,m,1}^{(\ell)}>0$ for $2\leq\ell\leq N-2$.
  By symmetry, $\Gamma_{N,m,1}^{(N-1)}=\Gamma_{N,m,1}^{(1)}>0$.
  Hence every nonzero-mode factor is positive, and Theorem \ref{thm:main_general_q_fourier} gives local orbital exponential stability.
  If $\Gamma_{N,m,1}^{(1)}<0$, the same theorem gives nonlinear instability in the quotient sense.
  The zero alternative cannot occur here, because Lemma \ref{lem:q1_first_mode_zero_41} would force $\left(N,m\right)=\left(4,1\right)$.
\end{proof}

\subsection{Complete classification for the 2-twisted branch}\label{sec:q2_classification}

We now specialize to $q=2$ and assume the existence conditions in \eqref{eq:q2_main_existence_conditions}.
As for the fundamental twisted branch, the first Fourier mode detects the transition.
Here, however, the sign of this mode factor can be characterized explicitly in terms of $N$ and $m$, while additional finite-sum estimates control all remaining modes on the stable side.
For this branch,
\begin{equation*}
  \Gamma_{N,m,2}^{(\ell)}
  =\sum_{r=1}^{m}\cos\left(\frac{4\pi r}{N}\right)
  \left(1-\cos\left(\frac{2\pi\ell r}{N}\right)\right),\quad \ell\in[N-1].
\end{equation*}
The proofs of the following three lemmas are deferred to Appendix \ref{sec:model_lemma_proofs}.

\begin{lemma}\label{lem:q2_positive_factors}
  Suppose that
  \begin{equation}\label{eq:q2_positive_range}
    m\geq2\quad\text{and}\quad N\geq6m+3.
  \end{equation}
  Then $\Gamma_{N,m,2}^{(\ell)}>0$ for every $\ell\in[N-1]$.
\end{lemma}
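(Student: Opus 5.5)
The plan is to put every mode factor into closed form through a Dirichlet kernel, treat the resonant modes $\ell\in\{2,N-2\}$ by a sharp comparison, and treat the other modes by coarser estimates. Set $n:=2m+1$, $x:=4\pi/N$ and $y_\ell:=2\pi\ell/N$. Let
\begin{equation*}
  F(z):=\frac{\sin\left(nz/2\right)}{2\sin\left(z/2\right)}=\frac12+\sum_{r=1}^{m}\cos(rz),\quad F(0):=\frac n2 .
\end{equation*}
Writing $\cos(rx)\cos(ry_\ell)=\frac12\left[\cos\left(r(x+y_\ell)\right)+\cos\left(r(x-y_\ell)\right)\right]$ and summing over $r$ gives the exact identity
\begin{equation*}
  \Gamma_{N,m,2}^{(\ell)}=F(x)-\frac12F(x+y_\ell)-\frac12F(x-y_\ell),\quad \ell\in[N-1].
\end{equation*}
Here $x\pm y_\ell=2\pi(2\pm\ell)/N$. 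The symmetry $\Gamma^{(\ell)}=\Gamma^{(N-\ell)}$ lets me restrict to $1\le\ell\le N/2$. The hypothesis $N\ge6m+3=3n$ gives $nx/2=2\pi n/N\le2\pi/3$, so $F(x)>0$. Moreover $F(x)\ge n\sin\vartheta/(2\vartheta)$ with $\vartheta:=nx/2$, which yields $F(x)\ge\frac{3\sqrt3}{8\pi}n$.

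\textbf{Resonant mode $\ell=2$ (and $\ell=N-2$).} Here $x-y_\ell=0$, so $\Gamma^{(2)}=F(x)-\frac12F(2x)-\frac n4$. This is precisely the first-mode expression \eqref{eq:q1_first_mode_gamma_closed_form_ny} with $\pi/N$ replaced by $y:=2\pi/N$. Thus
\begin{equation*}
  4\Gamma^{(2)}=\frac{\sin\upsilon}{\sin y}\left(2-\frac{\cos\upsilon}{\cos y}\right)-n,\qquad \upsilon:=ny\le\frac{2\pi}{3}.
\end{equation*}
I need this to be positive for $m\ge2$. I expect this step to carry the sharpness of the threshold. It is where the remark's ``triple-angle comparison'' enters: at $\upsilon=2\pi/3$ one has $\sin\upsilon(2-\cos\upsilon)=\frac{5\sqrt3}{4}>\frac{2\pi}{3}$.

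I would first use $\sin y<y$ and $1/\cos y>1$ to reduce to a one-variable inequality in $\upsilon$ on $(0,2\pi/3]$. That inequality is $\sin\upsilon(2-\cos\upsilon)-\upsilon\ge0$, up to an error controlled by $y\le\pi/(3n)$. The function $h(\upsilon):=\sin\upsilon(2-\cos\upsilon)-\upsilon$ has derivative $2\cos\upsilon(1-\cos\upsilon)$. Hence $h$ increases on $(0,\pi/2)$, decreases on $(\pi/2,2\pi/3)$, and stays positive on the whole interval. The difficulty is that $h(\upsilon)=O(\upsilon^3)$ near $0$. I would therefore keep the exact $\sin y$ and $\cos y$ there, expanding to third order with the correct sign, or handle small $\upsilon$ by the cosine-positivity argument. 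If $\upsilon$ is small, all $\cos(rx)$ are positive and $\Gamma^{(\ell)}>0$ trivially.

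\textbf{Non-resonant modes.} If $4m<N$, then $rx<\pi/2$ for all $r\le m$. Every coefficient $\cos(rx)$ is then positive, and $\Gamma^{(\ell)}>0$ follows immediately because $1-\cos(ry_\ell)$ is not identically zero for $\ell\neq0$. This settles everything except $6m+3\le N\le8m$, where $n/N\in(1/4,1/3]$ is bounded below; I would exploit this from here on.

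\emph{Far modes.} Set $k_\pm:=\min\left(|2\pm\ell|,N-|2\pm\ell|\right)$. The bound $|F(z)|\le\pi/(2|z|)$ on $(-\pi,\pi]$ gives $|F(x\pm y_\ell)|\le N/(4k_\pm)$. Compare this with $F(x)\ge\frac{3\sqrt3}{8\pi}n>\frac{3\sqrt3}{32\pi}N$. The comparison settles every $\ell$ with $\min k_\pm$ at least a fixed small constant (around $5$).

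\emph{Near modes.} This leaves $\ell\in\{1,3,4,5,6,\dots\}$ up to that constant. For each such $\ell$ I would write $\Gamma^{(\ell)}$ explicitly through $F$ at the angles $2\pi j/N$ with small $j$, and verify positivity as an explicit inequality in $\upsilon=2\pi n/N\in(\pi/2,2\pi/3]$. The key is a concavity, or second-difference, estimate for $j\mapsto F(2\pi j/N)$ near $j=2$, reducing each case to a sine inequality of the same type as the resonant one. In particular, $\ell=1$ compares $F$ at $j=1,2,3$.

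I expect these near modes, especially $\ell=1$ and $\ell=3$, to be the main obstacle besides the resonant case. For them $F(x-y_\ell)$ can exceed $F(x)$, so genuine cancellation from the second difference is required, not a crude bound.
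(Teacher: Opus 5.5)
\textbf{Verdict: there is a genuine gap, at the mode $\ell=1$ (and $\ell=N-1$).} Your proposal flags this mode as an obstacle but gives no workable argument for it.

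\textbf{What is already sound.} Your Dirichlet-kernel identity is the paper's representation $\Gamma_{N,m,2}^{(\ell)}=\frac14\left(2Q_2-Q_{\ell-2}-Q_{\ell+2}\right)$ with $Q_k=2F(2\pi k/N)$. Several further steps are correct:
\begin{enumerate}
  \item Every $\cos(4\pi r/N)$, $r\in[m]$, is positive once $N\geq8m+1$. You wrote $4m<N$, but $4\pi m/N<\pi/2$ needs $N>8m$, which is what your leftover range $6m+3\leq N\leq8m$ already presupposes.
  \item On that leftover range $\upsilon=2\pi n/N\in(\pi/2,2\pi/3]$, so $\cos\upsilon<0$ and your reduction of the resonant mode to $h(\upsilon)>0$ is valid. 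This is the paper's Subcase 2.2; your split makes Subcase 2.1 unnecessary.
  \item The far-mode comparison with constant $5$ checks out.
\end{enumerate}

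\textbf{Where the sharpness actually sits.} You place it in the resonant mode, but that factor has a margin of order $N$ at $\upsilon=2\pi/3$, since $h(2\pi/3)=5\sqrt3/4-2\pi/3>0$. It is $\Gamma_{N,m,2}^{(1)}$ that changes sign at the threshold (Lemma \ref{lem:q2_first_mode_sign}). At $N=6m+3$ its leading-order part vanishes identically: replacing $\sin(k\pi/N)$ by $k\pi/N$ gives
\[
  2Q_2-Q_1-Q_3\approx\frac{N}{\pi}\left[\sin\upsilon-\sin\frac{\upsilon}{2}-\frac13\sin\frac{3\upsilon}{2}\right],
\]
which is $0$ at $\upsilon=2\pi/3$. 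In fact $\Gamma_{6m+3,m,2}^{(1)}=\frac{\sqrt3\tan\left(\pi/(2N)\right)}{8\cos\left(\pi/N\right)}$, which is of order $1/N$ (about $0.023$ for $(N,m)=(15,2)$).

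\textbf{Why your proposed method cannot close $\ell=1$.} A one-variable sine inequality in $\upsilon$, or a concavity estimate for the continuous kernel, discards terms of exactly this size. At the included endpoint $\upsilon=2\pi/3$ it can give at best $\geq0$. Relying on ``cancellation from the second difference'' does not help, because to leading order that second difference is exactly zero there.

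\textbf{The missing ingredient is an exact finite-$N$ identity.} Put $a:=n\pi/N$ and $b:=\pi/N$, so $Q_k=\sin(ka)/\sin(kb)$. Apply $\sin2\theta=2\sin\theta\cos\theta$ and $\sin3\theta=\sin\theta\left(4\cos^2\theta-1\right)$ at $\theta=a$ and $\theta=b$. This writes $Q_1,Q_2,Q_3$ as $\frac{\sin a}{\sin b}$ times rational functions of $\cos a$ and $\cos b$, and yields
\[
  Q_2-\frac{Q_1+Q_3}{2}=-\frac{\sin a\left(\cos a-\cos b\right)\left(2\cos a\cos b-2\cos^2b+1\right)}{\sin b\,\cos b\left(4\cos^2b-1\right)}.
\]
The hypothesis $N\geq3n$ is exactly $\cos a\geq\frac12$. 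That makes the last numerator factor at least $\left(1-\cos b\right)\left(2\cos b+1\right)>0$. This quantity is of size $O(N^{-2})$, and it is precisely the margin your estimates would throw away.

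\textbf{The remaining near modes.} Once $\ell=1$ is settled, they need less case work than you plan:
\begin{enumerate}
  \item The paper shows $Q_5<Q_3$, so $\ell=3$ follows from $Q_1+Q_5<Q_1+Q_3<2Q_2$.
  \item It also shows $Q_k<Q_2$ for all $3\leq k\leq N-3$, using monotonicity of $\sin(nz)/\sin z$ on $(0,\pi/n)$ plus a crude bound beyond. This disposes of every $\ell\geq4$ at once.
  \item That includes $\ell=5$, where your bound $\left|F(2\pi k/N)\right|\leq N/(4k)$ at $k=3$ is too weak against $F(x)>\frac{3\sqrt3}{32\pi}N$.
\end{enumerate}
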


\begin{lemma}\label{lem:q2_first_mode_sign}
  Suppose that
  \begin{equation}\label{eq:q2_lower_intermediate_range}
    2m+2\leq N\leq6m+2.
  \end{equation}
  Then $\Gamma_{N,m,2}^{(1)}\leq0$. Equality holds if and only if $\left(N,m\right)=\left(8,1\right)$.
\end{lemma}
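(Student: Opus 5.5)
We would prove the lemma from a closed form for the first-mode factor, in the spirit of \eqref{eq:q1_first_mode_gamma_closed_form_ny}. Set $n:=2m+1$, $y:=\pi/N$ and $\upsilon:=ny$. Since $\cos(2rx)\cos(rx)=\tfrac12\left(\cos(3rx)+\cos(rx)\right)$ with $x=2y$, we have
\begin{equation*}
  \Gamma_{N,m,2}^{(1)}=\sum_{r=1}^{m}\cos(4ry)-\frac12\sum_{r=1}^{m}\cos(6ry)-\frac12\sum_{r=1}^{m}\cos(2ry).
\end{equation*}
We apply the Dirichlet form of Lemma \ref{lem:finite_trig_sums},
\begin{equation*}
  \sum_{r=1}^{m}\cos(2kry)=\frac12\left(\frac{\sin(k\upsilon)}{\sin(ky)}-1\right),
\end{equation*}
with $k=1,2,3$. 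The constants cancel, and we expect
\begin{equation*}
  4\Gamma_{N,m,2}^{(1)}=\frac{2\sin 2\upsilon}{\sin 2y}-\frac{\sin 3\upsilon}{\sin 3y}-\frac{\sin\upsilon}{\sin y}.
\end{equation*}
The hypothesis $2m+2\leq N\leq 6m+2$ translates into $y\leq\pi/(2m+2)\leq\pi/4$ and $\pi/3<\upsilon<\pi$. The upper bound on $\upsilon$ comes from $N\geq n+1$, and the lower bound from $N\leq 3n-1$. In particular $\sin y$, $\sin 2y$ and $\sin 3y$ are all positive.

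\textbf{Reduction.} Multiply through by $\sin y\,\sin 2y\,\sin 3y>0$. Expand $\sin 2\upsilon=2\sin\upsilon\cos\upsilon$ and use the triple-angle formula $\sin 3\upsilon=\sin\upsilon\,(4\cos^2\upsilon-1)$, together with the analogous formulas in $y$. Dividing by $\sin\upsilon>0$, the claim becomes a polynomial inequality in $c:=\cos\upsilon\in(-1,\tfrac12)$ and $s:=\cos y\in[\tfrac{\sqrt2}{2},1)$:
\begin{equation*}
  4c\,(4s^2-1)-(4c^2-1)\cdot 2s-2s\,(4s^2-1)\leq 0 .
\end{equation*}
Here we have used $\sin 2y/\sin y=2s$ and $\sin 3y/\sin y=4s^2-1$, and we have multiplied by $\sin 3y/\sin y$. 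The inequality should be checked carefully.

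\textbf{Sign analysis.} The left-hand side is a concave quadratic in $c$, namely $-8s\,c^2+4(4s^2-1)c+2s-2s(4s^2-1)$. The plan is to show that it is nonpositive on the admissible region. One route is to locate its roots explicitly. Another is to note that it factors, since at $c=s$ it vanishes identically: $-8s^3+16s^3-4s+2s-8s^3+2s=0$. So $c=s$ is a root and the quadratic equals $-2(c-s)\bigl(4sc-(4s^2-1)+\dots\bigr)$. We expect a factorization of the form $-2(c-s)(4sc-\beta)$, with the second root at $c=\beta/(4s)$, and we would determine $\beta$ explicitly. Since $\upsilon>y$ and $\upsilon<\pi$, we have $c<s$. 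The sign therefore reduces to comparing $c$ with the second root. We expect the condition $\upsilon>\pi/3$, that is $N\leq 6m+2$, to be exactly what places $c$ on the correct side of that root; this is the ``triple-angle comparison'' of the remark following Corollary \ref{cor:main_q2_classification}. Equality then requires $c$ to equal the second root. We would check that, with $\upsilon=n\pi/N$ and $y=\pi/N$ and the integer constraints, this forces $m=1$ and $N=8$. Indeed, for $m=1$ we have $\Gamma_{N,1,2}^{(1)}=\cos(4\pi/N)\bigl(1-\cos(2\pi/N)\bigr)$, which on $4\leq N\leq 8$ vanishes only at $N=8$. That case can be handled directly, so the general argument may assume $m\geq2$.

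\textbf{Main obstacle.} The hard part is the sign analysis: getting the exact factorization of the quadratic right, and showing that the boundary $N=6m+2$ (with $\upsilon$ close to $\pi/3$) still lies on the nonpositive side with strict inequality. This needs a careful estimate relating $\cos\upsilon$ to a function of $\cos y$ near $\upsilon=\pi/3$, in which discreteness ($N\leq 3n-1$ rather than $N<3n$) matters. If the factorization route fails, we would fall back on monotonicity in $\upsilon$ for fixed $y$, plus a direct check at the endpoint $N=6m+2$.
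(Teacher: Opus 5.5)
Your reduction is correct and matches the paper's. You have $4\Gamma_{N,m,2}^{(1)}=2Q_2-Q_1-Q_3$, and your quadratic factors as $-8s\,(c-s)\left(c-r_\ast\right)$ with $r_\ast:=s-\frac{1}{2s}$, i.e.\ $\beta=4s^2-2$. This is exactly the paper's sign factor $\chi=2s\,(c-r_\ast)$, and your direct treatment of $m=1$ is also fine. The gap is that the proposal stops where the content of the lemma begins: you never prove $\cos\upsilon\le\cos y-\frac{1}{2\cos y}$, and the heuristic you give for it is wrong. The condition $\upsilon>\pi/3$ only gives $c<\frac12$, while $r_\ast<\frac12$ for every $s<1$, so no qualitative bound on $\upsilon$ suffices.

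The missing idea is a quantitative integer gap combined with a triple-angle comparison. Set $\Delta:=3n-N\geq1$. Then $\upsilon=(\pi+\Delta y)/3$, hence $c\le d_1:=\cos\left((\pi+y)/3\right)$. The triple-angle identity gives $4d_1^3-3d_1=-s$, while a direct computation gives
\begin{equation*}
  4r_\ast^3-3r_\ast+s=\frac{(s^2-1)(8s^4-8s^2+1)}{2s^3}=-\frac{\sin^2 y\,\cos 4y}{2s^3}.
\end{equation*}
Both $d_1$ and $r_\ast$ lie in $[0,\frac12]$, where $x\mapsto 4x^3-3x$ is strictly decreasing. So $d_1\le r_\ast$ holds exactly when $\cos 4y\ge0$, i.e.\ $N\ge8$, with equality only at $N=8$.

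This bound from $N\le 6m+2$ alone is tight at $(N,m)=(8,1)$. It actually fails for small $N$: at the admissible pair $(6,2)$ one has $d_1=\cos(7\pi/18)>\sqrt3/6=r_\ast$. That is why the paper splits into two cases. When $\Delta=1$, automatically $N=3n-1\ge8$. When $\Delta\ge2$, it compares instead with $d_2:=\cos\left((\pi+2y)/3\right)$, using $4d_2^3-3d_2=1-2s^2$ and
\begin{equation*}
  4r_\ast^3-3r_\ast-(1-2s^2)=\frac{(s-1)(2s^2-1)(4s^3+6s^2-s-1)}{2s^3}\le0\quad\text{on }\left[\tfrac{\sqrt2}{2},1\right).
\end{equation*}
In your setup, after removing $m=1$, you could instead verify $(N,m)=(6,2)$ and $(7,2)$ by hand.

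Your fallback (monotonicity in $\upsilon$ plus a check at $N=6m+2$) points the right way, but that check is a one-parameter inequality in $y$, not a finite verification. It needs exactly the triple-angle computation above. For $N\ge8$ and $m\ge2$, strictness then follows from $\cos 4y>0$ if $N>8$, and from $c<d_1$ (since $\Delta\ge2$) if $N=8$.
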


\begin{lemma}\label{lem:q2_degenerate_instability}
  For $\left(N,m,q\right)=\left(8,1,2\right)$,
  every nonzero-mode factor vanishes, but the corresponding $2$-twisted dancing orbit is nonlinearly unstable in the quotient sense.
\end{lemma}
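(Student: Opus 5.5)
The proof reduces the degenerate case to the four-oscillator nearest-neighbor model and invokes the known instability result there.

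\textbf{Vanishing of the mode factors.} For $(N,m,q)=(8,1,2)$ the only coefficient in \eqref{eq:gamma_general_q_definition} is $\cos\left(2\pi\cdot 2/8\right)=\cos(\pi/2)=0$. Hence
\begin{equation*}
\Gamma_{8,1,2}^{(\ell)}=0\quad\text{for every }\ell\in[7],
\end{equation*}
and the existence conditions $8\nmid 2$ and $8\nmid 4$ hold. The profile is $\phi^{(2)}_i=\pi(i-1)/2$ and $\Omega_{8,1,2}=K$.

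\textbf{Invariance of the four-periodic subspace.} Let
\begin{equation*}
\mathcal P:=\left\{\theta\in\mathbb R^8:\theta_{i+4}=\theta_i+2\pi k_i,\ k_i\in\mathbb Z,\ i\in[4]\right\},
\end{equation*}
that is, the configurations that are four-periodic on $\mathbb T^8$. We check that $\mathcal P$ is invariant under \eqref{eq:forward_m_model1} with $m=1$. For $\theta\in\mathcal P$,
\begin{equation*}
\dot\theta_{i+4}=K\sin\left(\theta_{\langle i+5\rangle}-\theta_{i+4}\right)=K\sin\left(\theta_{\langle i+1\rangle}-\theta_i\right)=\dot\theta_i
\end{equation*}
modulo $2\pi$ in the arguments, including the wrap-around $\theta_{\langle 9\rangle}=\theta_1\equiv\theta_5$. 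By uniqueness of solutions, data in $\mathcal P$ stay in $\mathcal P$.

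On $\mathcal P$, the variables $(\theta_1,\dots,\theta_4)$ satisfy exactly the four-oscillator nearest-neighbor system \eqref{eq:nearest_model}. The profile $\phi^{(2)}$ lies in $\mathcal P$ and restricts to the $1$-twisted (splay) profile $(0,\pi/2,\pi,3\pi/2)$ with $N=4$, whose angular velocity is $\Omega_{4,1,1}=K\sin(\pi/2)=K=\Omega_{8,1,2}$. So the rotating frames agree.

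\textbf{Transfer of instability.} For $u\in\mathcal P$ (after choosing the lift with $k_i=0$), the diameter of the eight-component vector equals that of its first four components, $D(u)=D(u_1,\dots,u_4)$. The four-oscillator splay orbit, i.e. $(N,m)=(4,1)$ with $q=1$, is nonlinearly unstable by \cite[Sec. 4.1]{H-K}, as recalled after Corollary \ref{cor:main_q1_classification}. This gives $\eta>0$ and, for every small $\varepsilon$, a four-dimensional initial perturbation with $D<\varepsilon$ and a time $t_\varepsilon$ at which $D\geq\eta$. Its four-periodic extension is an initial datum for the eight-oscillator system with the same initial diameter, and by the invariance above its solution is the periodic extension of the four-oscillator solution. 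At $t_\varepsilon$ its deviation $D\left(\theta(t_\varepsilon)-Kt_\varepsilon\mathbf 1-\phi^{(2)}\right)$ is therefore still at least $\eta$, which is instability in the quotient sense.

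\textbf{Main obstacle.} The main step to check is that the instability statement in \cite{H-K} has the quotient form used here: escape in phase-difference diameter from arbitrarily small perturbations, in a rotating frame and modulo common shifts. If the formulation there differs, I would argue directly in the edge variables
\begin{equation*}
x_i:=\theta_{i+1}-\theta_i-\pi/2,\quad i\in[4],\qquad \sum_i x_i=0.
\end{equation*}
In these variables the reduced system becomes $\dot x_i=K\left(\cos x_{i+1}-\cos x_i\right)$. Its leading part is quadratic, $\dot x_i\approx -\tfrac K2\left(x_{i+1}^2-x_i^2\right)$. I would then exhibit an explicit invariant ray or monotone quantity along which small data grow to size $O(1)$, and finally translate a lower bound on $\max_i|x_i|$ into a lower bound on $D$.
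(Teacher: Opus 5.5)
Your proposal is correct and follows essentially the same route as the paper: the four-periodic duplication (with $\theta_{i+4}=\theta_i+2\pi$, matching the paper's shift $\zeta$) identifies $\phi_8^{(2)}$ with the four-oscillator splay profile $\phi_4^{(1)}$ at the common frequency $K$, duplication preserves $D$, and the instability is imported from \cite[Sec. 4.1]{H-K}. The paper settles the obstacle you flag without any edge-variable analysis: it quotes from \cite{H-K} the concrete fact that the explicit perturbations $v^\delta=(0,-\delta/6,-\delta/3,-\delta/2)$ converge to the synchronized orbit, and then derives quotient escape itself by taking a quotient neighborhood of the twisted orbit whose closure excludes the synchronized point, together with the first exit time from the $\eta$-ball.
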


The preceding lemmas settle the stable range, the first-mode sign in the complementary range, and the unique fully degenerate case.
We now combine these ingredients with the DFT-based criterion.

\begin{proof}[\textbf{Proof of Corollary \ref{cor:main_q2_classification}}]
  Suppose first that $N\geq6m+3$.
  If $m=1$, then $N\geq9$, and for every $\ell\in[N-1]$,
  \begin{equation*}
    \Gamma_{N,1,2}^{(\ell)}=\cos\left(\frac{4\pi}{N}\right)\left(1-\cos\left(\frac{2\pi\ell}{N}\right)\right)>0.
  \end{equation*}
  If $m\geq2$, Lemma \ref{lem:q2_positive_factors} gives the same conclusion.
  Thus every nonzero-mode factor is positive, and Theorem \ref{thm:main_general_q_fourier} gives local orbital exponential stability.

  Suppose next that $N\leq6m+2$.
  If $N\leq2m+1$, then
  \begin{equation*}
    N\leq2m+1<\frac{8m+3}{3}\leq\frac{8m+3}{3}\gcd(N,2),
  \end{equation*}
  so Corollary \ref{cor:main_general_q_parameter_ranges} gives nonlinear instability.
  It remains to consider $2m+2\leq N\leq6m+2$.
  Lemma \ref{lem:q2_first_mode_sign} gives $\Gamma_{N,m,2}^{(1)}<0$ unless $\left(N,m\right)=\left(8,1\right)$.
  For every other pair in this range, Theorem \ref{thm:main_general_q_fourier}, with $\ell=1$, gives nonlinear instability in the quotient sense.
  The remaining pair $\left(N,m\right)=\left(8,1\right)$ is nonlinearly unstable by Lemma \ref{lem:q2_degenerate_instability}.
\end{proof}


\section{Conclusion}\label{sec:conclusion}
\setcounter{equation}{0}

We developed an existence and quotient-stability framework for nonzero-frequency phase-locked motions in asymmetrically coupled Kuramoto networks.
The existence theory gives two necessary conditions: a dancing equilibrium requires asymmetric coupling and a directed cycle in the associated digraph.
It also introduces structural equitability and $q$-twisted state equitability and proves a partition-based criterion for class-constant profiles, including the standard labeled $q$-twisted profiles obtained from singleton partitions.
For the forward $m$-neighbor model, existence reduces to an exact indivisibility criterion.
The stability theory combines a contraction argument based on edgewise phase differences with a DFT-based criterion for the existing $q$-twisted branches of the forward model.
The contraction argument yields local orbital exponential stability together with an explicit positively invariant set contained in the local basin of attraction.
For arbitrary twisted indices, we obtain the explicit stable range $N\geq\frac{\pi}{2}\left(2m+1\right)q^\sharp$ and the instability range $N\leq\left(8m+3\right)\gcd(N,q)/3$ for existing branches.
For the $1$-twisted branch, the orbit is locally orbitally exponentially stable when $N\geq3m+2$ and nonlinearly unstable in the quotient sense when $3\leq N\leq2.9m$.
In the remaining transition window $2.9m<N\leq3m+1$, the sign of the first-mode factor determines the conclusion except at its unique zero $\left(N,m\right)=\left(4,1\right)$, where the orbit is also nonlinearly unstable.
For the $2$-twisted branch, the orbit is locally orbitally exponentially stable when $N\geq6m+3$ and nonlinearly unstable in the quotient sense when $N\leq6m+2$.
The fully degenerate pair $\left(N,m\right)=\left(8,1\right)$ is resolved by reduction to the four-oscillator nearest-neighbor model.
We also construct explicit escaping perturbations along unstable Fourier modes for arbitrary twisted indices satisfying the existence conditions.

\vspace{0.5em}

Several questions remain open.
The constructive existence criteria developed here identify broad classes of dancing equilibria, but they do not classify all nonstationary locked profiles supported by an arbitrary fixed asymmetric coupling matrix.
Moreover, the condition on the edgewise phase differences is sufficient rather than necessary, and sharper nonlinear criteria are needed to recover the additional stable regimes detected by the Fourier analysis of the forward $m$-neighbor model.
For branches with $q^\sharp\geq3$, Corollary \ref{cor:main_general_q_parameter_ranges} still provides explicit finite-size stability and instability ranges for arbitrary twisted indices. 
These general ranges are typically less sharp than the specialized classifications for $q^\sharp=1$ and $q^\sharp=2$, so the exact Fourier criterion must still be checked mode by mode for the remaining parameter values.
One possible direction is to establish finite low-mode reduction principles that guarantee positivity of all sufficiently high mode factors and reduce the stability problem to a finite set of $q$-dependent comparisons.
Another direction is to exploit periodic invariant reductions when the network size and twisted index satisfy compatible divisibility relations, thereby transferring instability information from lower-dimensional twisted branches to larger networks.
Finally, nonhyperbolic branches for which all nonzero-mode factors are nonnegative and at least one vanishes require a higher-order analysis, such as a center-manifold or normal-form reduction, of the corresponding critical modes.


\appendix

\section{Technical lemmas}\label{sec:technical_lemmas}
\setcounter{equation}{0}

This appendix collects the general tools from trigonometry, dynamical systems, Fourier analysis, and consensus theory used in the proofs.
Precise references are provided for the finite trigonometric identities, the linearized stability and instability principle, and the continuous-time consensus estimate, while the lemmas on the Fourier basis and circulant diagonalization are proved for completeness.
The final two lemmas provide the exact finite estimates and uniform trigonometric positivity needed for the general twisted-index parameter ranges.

\begin{lemma}\label{lem:finite_trig_sums}
  Let $m$ be a positive integer. If $x\not\equiv0\pmod{2\pi}$, then
  \begin{equation*}
    \sum_{r=1}^{m}\sin\left(rx\right)=\frac{\sin\left(mx/2\right)\sin\left(\left(m+1\right)x/2\right)}{\sin\left(x/2\right)}
  \end{equation*}
  and
  \begin{equation*}
    \sum_{r=1}^{m}\cos\left(rx\right)=\frac{\sin\left(mx/2\right)\cos\left(\left(m+1\right)x/2\right)}{\sin\left(x/2\right)}.
  \end{equation*}
\end{lemma}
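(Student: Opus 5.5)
The plan is to prove both identities with a single telescoping argument and read off the sine and cosine sums as the imaginary and real parts of one geometric sum. Alternatively, I can work entirely in real terms using the product-to-sum formula
\begin{equation*}
  2\sin\left(x/2\right)\sin\left(rx\right)=\cos\left(\left(r-\tfrac12\right)x\right)-\cos\left(\left(r+\tfrac12\right)x\right),
\end{equation*}
together with its companion
\begin{equation*}
  2\sin\left(x/2\right)\cos\left(rx\right)=\sin\left(\left(r+\tfrac12\right)x\right)-\sin\left(\left(r-\tfrac12\right)x\right).
\end{equation*}
The hypothesis $x\not\equiv0\pmod{2\pi}$ is used exactly once, to guarantee $\sin(x/2)\neq0$, which is what lets us divide at the end.

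First step: multiply $\sum_{r=1}^{m}\sin(rx)$ by $2\sin(x/2)$ and sum the first identity over $r\in[m]$. The right-hand side telescopes to
\begin{equation*}
  \cos\left(x/2\right)-\cos\left(\left(m+\tfrac12\right)x\right).
\end{equation*}
The difference-of-cosines formula $\cos A-\cos B=2\sin\frac{A+B}{2}\sin\frac{B-A}{2}$, applied with $A=x/2$ and $B=(m+\frac12)x$, rewrites this as
\begin{equation*}
  2\sin\left(\left(m+1\right)x/2\right)\sin\left(mx/2\right).
\end{equation*}
Dividing by $2\sin(x/2)$ gives the first identity.

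Second step: do the same for the cosine sum using the companion identity. The sum telescopes to
\begin{equation*}
  \sin\left(\left(m+\tfrac12\right)x\right)-\sin\left(x/2\right),
\end{equation*}
and the difference-of-sines formula $\sin B-\sin A=2\cos\frac{A+B}{2}\sin\frac{B-A}{2}$ turns this into
\begin{equation*}
  2\cos\left(\left(m+1\right)x/2\right)\sin\left(mx/2\right).
\end{equation*}
Dividing by $2\sin(x/2)$ yields the second identity.

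There is no real obstacle here. The only care needed is in the sign and half-angle bookkeeping of the sum-to-product formulas, and in noting that $\sin(x/2)\neq0$ exactly when $x\not\equiv0\pmod{2\pi}$. As a cross-check, the complex route gives the same result: for $z=e^{\mathrm i x}\neq1$,
\begin{equation*}
  \sum_{r=1}^{m}z^r=z\,\frac{z^m-1}{z-1}=e^{\mathrm i(m+1)x/2}\,\frac{\sin(mx/2)}{\sin(x/2)},
\end{equation*}
and taking real and imaginary parts recovers both identities at once.
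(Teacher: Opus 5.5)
Your proof is correct. The two product-to-sum identities, the telescoped endpoints $\cos(x/2)-\cos((m+\tfrac12)x)$ and $\sin((m+\tfrac12)x)-\sin(x/2)$, and the sum-to-product conversions all check out. The condition $\sin(x/2)\neq0$ is exactly equivalent to $x\not\equiv0\pmod{2\pi}$.

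The paper does not prove this lemma. It reads both identities off the tabulated arithmetic-progression sums in Gradshteyn--Ryzhik (Sec. 1.341, formulas 1 and 3), taking $n=m$ and setting the initial angle and the common difference both equal to $x$. So the two routes differ in self-containment, not in substance.

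The citation is shorter and gives the general progression $\sum_{k=0}^{n-1}\sin(a+kd)$ for free. Your telescoping argument, or your geometric-sum cross-check, makes the appendix self-contained. That matches how the paper treats the Fourier-basis and circulant-diagonalization lemmas, which it proves ``for completeness.'' It also shows exactly where the hypothesis on $x$ enters. Your argument extends verbatim to a general progression by replacing $rx$ with $a+rx$ in the telescoping, so you lose no generality.
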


These identities are obtained from \cite[Sec. 1.341, formulas 1 and 3]{G-Ry} by taking $n=m$ and setting the initial angle and common difference equal to $x$.

The following linearized stability and instability principle is standard. Relevant references include \cite{H-P-S,P-M}.

\begin{lemma}\label{lem:standard_linearized_criterion}
  Consider a smooth finite-dimensional autonomous system with an equilibrium $x_\ast$.
  If every eigenvalue of the linearization at $x_\ast$ has strictly negative real part, then $x_\ast$ is locally exponentially stable.
  If the linearization has an eigenvalue with strictly positive real part, then $x_\ast$ is nonlinearly unstable.
\end{lemma}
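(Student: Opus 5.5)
The plan is to derive Lemma~\ref{lem:standard_linearized_criterion} from the variation-of-constants formula combined with a Lyapunov-type norm adapted to the spectral decomposition of the linearization. Translate the equilibrium to the origin and write the system as $\dot x = Mx + g(x)$, where $M$ is the Jacobian at $x_\ast$. By smoothness, $g(x)=O(|x|^2)$; in particular, for every $\delta>0$ there is $r>0$ with $|g(x)|\le\delta|x|$ whenever $|x|\le r$.

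\emph{Stability part.} If every eigenvalue of $M$ has real part below $-2\mu<0$, then $\|e^{tM}\|\le Ce^{-2\mu t}$. I will build the adapted quadratic norm
\begin{equation*}
  \|x\|_P^2=\int_0^\infty |e^{sM}x|^2\,ds.
\end{equation*}
It satisfies $\frac{d}{dt}\|x\|_P^2=-|x|^2$ along the linear flow, and it is equivalent to the Euclidean norm. The nonlinear term is absorbed by choosing $\delta$ small, which gives $\frac{d}{dt}\|x\|_P^2\le -c\|x\|_P^2$ on a small $P$-ball. This ball is then positively invariant, and exponential decay follows. Equivalently, a Gronwall argument on the Duhamel formula gives the same conclusion.

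\emph{Instability part.} Split $\mathbb R^n=E^u\oplus E^{cs}$ into spectral subspaces, with $E^u$ carrying the eigenvalues of real part $>0$ and $E^{cs}$ the rest. Choose $\beta>0$ strictly between $0$ and the smallest positive real part, and choose adapted norms such that
\begin{equation*}
  \langle Mx_u,x_u\rangle\ge 2\beta|x_u|^2,\qquad \langle Mx_{cs},x_{cs}\rangle\le \beta|x_{cs}|^2.
\end{equation*}
This is possible via a real Jordan form with small off-diagonal entries. I then use the cone $\mathcal K=\{|x_{cs}|\le|x_u|\}$ as in Chetaev's argument. For $|x|\le r$ with $\delta$ small relative to $\beta$, the following hold.
\begin{enumerate}
  \item $\frac{d}{dt}(|x_u|^2-|x_{cs}|^2)>0$ on $\partial\mathcal K\setminus\{0\}$, so $\mathcal K$ is forward invariant while the solution remains in the ball.
  \item Inside $\mathcal K$, $\frac{d}{dt}|x_u|\ge \beta|x_u|$.
\end{enumerate}
Hence any initial point in $\mathcal K$, however small, has $|x_u|$ growing exponentially until the trajectory leaves the ball of radius $r$. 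This gives instability with $\eta=r/2$.

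The main obstacle is the cone invariance computation. It requires the simultaneous spectral-gap estimates for both blocks and a careful absorption of the cross terms coming from $g$. Alternatively, one can cite the center-stable manifold theorem of \cite{H-P-S}, as the paper does in the proof of Theorem~\ref{thm:main_general_q_fourier}, together with \cite{P-M}. Since the statement is standard, the actual write-up can simply reference these sources.
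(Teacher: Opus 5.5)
Your proposal is correct, and it does more than the paper does. The paper gives no proof of this lemma: it only says the principle is standard and cites \cite{H-P-S,P-M}. The one place where it explicitly uses invariant-manifold theory is the center-stable manifold argument in the proof of Theorem~\ref{thm:main_general_q_fourier}.

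You instead give a self-contained argument. For the stable case you use the Lyapunov norm $\|x\|_P^2=\int_0^\infty|e^{sM}x|^2\,ds$, equivalently $M^{\top}P+PM=-I$. For the unstable case you use a Chetaev-type cone $\{|x_{cs}|\le|x_u|\}$, in an inner product adapted so that $E^u\perp E^{cs}$. One small repair is needed: since you require $\langle Mx_u,x_u\rangle\ge2\beta|x_u|^2$, you must choose $2\beta$, plus the size of the Jordan off-diagonal entries, below the smallest positive real part. Choosing only $\beta$ below it is not enough.

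What your route buys is an explicit escaping set: every small nonzero point of $E^u$ lies in the cone and leaves the $r$-ball. The vector $\widehat w_{\ell_\ast}$ lies in an invariant real Fourier subspace on which all eigenvalues have positive real part, so it lies in $E^u$ of the quotient linearization. Your cone argument therefore already gives the explicit escaping perturbations in the remark after Theorem~\ref{thm:main_general_q_fourier}, without the center-stable manifold theorem.

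The cited invariant-manifold route gives more structural information in return. Every orbit that stays near the equilibrium lies on $\mathcal W^{\mathrm{cs}}_{\mathrm{loc}}$, so every initial point off this manifold escapes, not only those in a cone around $E^u$.
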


With the Fourier notation introduced in Section \ref{sec:spectral_framework}, the following standard facts will be used repeatedly.

\begin{lemma}\label{lem:finite_fourier_basis}
  The vectors $v^{(0)},v^{(1)},\dots,v^{(N-1)}$ form a basis of $\mathbb C^N$.
  If $\ell\not\equiv0\pmod N$, then
  \begin{equation*}
    \sum_{i=1}^{N}v_i^{(\ell)}=0.
  \end{equation*}
  Moreover, $v_{\langle i+r\rangle}^{(\ell)}=\omega_N^{\ell r}v_i^{(\ell)}$ for $i\in[N]$ and $r\in\mathbb Z$.
\end{lemma}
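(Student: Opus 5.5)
The three assertions of Lemma \ref{lem:finite_fourier_basis} are elementary, and I would prove them in reverse order, since the shift identity is the simplest.

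\emph{Shift identity.} For $i\in[N]$ and $r\in\mathbb Z$, the index $\langle i+r\rangle$ is congruent to $i+r$ modulo $N$. Since $\omega_N^N=1$, the map $k\mapsto\omega_N^{\ell k}$ is $N$-periodic on $\mathbb Z$. Hence
\begin{equation*}
  v_{\langle i+r\rangle}^{(\ell)}=\omega_N^{\ell(\langle i+r\rangle-1)}=\omega_N^{\ell(i+r-1)}=\omega_N^{\ell r}v_i^{(\ell)}.
\end{equation*}

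\emph{Vanishing sums.} Set $z:=\omega_N^{\ell}$. If $\ell\not\equiv0\pmod N$, then $z\neq1$, because $\omega_N$ is a primitive $N$th root of unity. At the same time $z^N=1$. The geometric series therefore gives
\begin{equation*}
  \sum_{i=1}^{N}v_i^{(\ell)}=\sum_{k=0}^{N-1}z^k=\frac{z^N-1}{z-1}=0.
\end{equation*}

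\emph{Basis.} Since there are exactly $N$ vectors in $\mathbb C^N$, it suffices to prove linear independence. I would do this through orthogonality in the standard Hermitian inner product. For $\ell,\ell'\in\{0,\dots,N-1\}$,
\begin{equation*}
  \langle v^{(\ell)},v^{(\ell')}\rangle=\sum_{i=1}^{N}\omega_N^{(\ell-\ell')(i-1)}=\sum_{i=1}^{N}v_i^{(\ell-\ell')}.
\end{equation*}
Here $v^{(\ell-\ell')}$ is defined by the same formula with exponent $\ell-\ell'\in\mathbb Z$. Because $|\ell-\ell'|<N$, we have $\ell-\ell'\not\equiv0\pmod N$ whenever $\ell\neq\ell'$. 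The vanishing-sum step then gives $\langle v^{(\ell)},v^{(\ell')}\rangle=0$ for $\ell\neq\ell'$. When $\ell=\ell'$ the inner product equals $N$.

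The family is therefore orthogonal and consists of nonzero vectors, so it is linearly independent and hence a basis of $\mathbb C^N$. An alternative would be to observe that the matrix with these columns is a Vandermonde matrix with the distinct nodes $\omega_N^{\ell}$.

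There is no substantive obstacle. The only point needing care is that the vanishing-sum step is applied to an exponent $\ell-\ell'$ that may be negative or lie outside $\{0,\dots,N-1\}$. I would therefore state and prove that step for all integers $\ell\not\equiv0\pmod N$, so that the basis argument can use it directly.
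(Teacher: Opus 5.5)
Your proof is correct and follows essentially the same route as the paper. Both arguments use orthogonality of the Fourier modes under the Hermitian inner product, computed via the finite geometric sum, and both derive the shift identity from $\omega_N^N=1$ and $\langle i+r\rangle\equiv i+r\pmod N$. The only difference is the order: you prove the vanishing sum directly for every integer $\ell\not\equiv0\pmod N$ and then deduce orthogonality, whereas the paper proves orthogonality first and reads off the vanishing sum from the case $k=0$. As a result, your version explicitly covers exponents outside $\{0,\dots,N-1\}$.
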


\begin{proof}
  For $k,\ell\in\left\{0,1,\dots,N-1\right\}$, the standard Hermitian inner product gives
  \begin{equation*}
    \left\langle v^{(k)},v^{(\ell)}\right\rangle
    =\sum_{j=0}^{N-1}\overline{\omega_N^{kj}}\omega_N^{\ell j}
    =\sum_{j=0}^{N-1}\omega_N^{\left(\ell-k\right)j}.
  \end{equation*}
  If $k=\ell$, this sum equals $N$.
  If $k\neq\ell$, it is a finite geometric sum with ratio not equal to $1$, and therefore
  \begin{equation*}
    \sum_{j=0}^{N-1}\omega_N^{\left(\ell-k\right)j}
    =\frac{1-\omega_N^{\left(\ell-k\right)N}}{1-\omega_N^{\ell-k}}
    =0.
  \end{equation*}
  Thus the $N$ Fourier modes are nonzero and mutually orthogonal, so they form a basis of $\mathbb C^N$.
  Taking $k=0$ gives $\sum_{i=1}^{N}v_i^{(\ell)}=0$ whenever $\ell\not\equiv0\pmod N$.
  Finally, since $\langle i+r\rangle-1\equiv i+r-1\pmod N$ and $\omega_N^{\ell N}=1$, one has
  \begin{equation*}
    v_{\langle i+r\rangle}^{(\ell)}
    =\omega_N^{\ell\left(\langle i+r\rangle-1\right)}
    =\omega_N^{\ell\left(i+r-1\right)}
    =\omega_N^{\ell r}v_i^{(\ell)}.
  \end{equation*}
\end{proof}

For real coefficients $c_1,\dots,c_m$, define the circulant difference operator $L$ by
\begin{equation*}
  \left(Lu\right)_i:=\sum_{r=1}^{m}c_r\left(u_{\langle i+r\rangle}-u_i\right),\quad i\in[N].
\end{equation*}

\begin{lemma}\label{lem:circulant_diagonalization}
  Each Fourier mode $v^{(\ell)}$ is an eigenvector of $L$, and
  \begin{equation*}
    Lv^{(\ell)}=\lambda_\ell v^{(\ell)},\quad
    \lambda_\ell=\sum_{r=1}^{m}c_r\left(\omega_N^{\ell r}-1\right).
  \end{equation*}
  Consequently,
  \begin{equation*}
    \operatorname{Re}\lambda_\ell=-\sum_{r=1}^{m}c_r\left(1-\cos\left(\frac{2\pi\ell r}{N}\right)\right).
  \end{equation*}
\end{lemma}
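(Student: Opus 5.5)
The plan is a direct computation of $Lv^{(\ell)}$ using the shift identity from Lemma \ref{lem:finite_fourier_basis}, followed by taking real parts.

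Fix $\ell\in\{0,1,\dots,N-1\}$ and $i\in[N]$. By Lemma \ref{lem:finite_fourier_basis}, $v^{(\ell)}_{\langle i+r\rangle}=\omega_N^{\ell r}v^{(\ell)}_i$ for every integer $r$. Substituting into the definition of $L$ gives
\begin{equation*}
  \left(Lv^{(\ell)}\right)_i=\sum_{r=1}^{m}c_r\left(\omega_N^{\ell r}v_i^{(\ell)}-v_i^{(\ell)}\right)
  =\left(\sum_{r=1}^{m}c_r\left(\omega_N^{\ell r}-1\right)\right)v_i^{(\ell)}.
\end{equation*}
The bracketed factor does not depend on $i$, so $Lv^{(\ell)}=\lambda_\ell v^{(\ell)}$ with the stated $\lambda_\ell$. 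Since $v^{(\ell)}_1=1$, the mode $v^{(\ell)}$ is nonzero and is therefore a genuine eigenvector.

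For the real part, write $\omega_N^{\ell r}=\cos(2\pi\ell r/N)+\mathrm i\sin(2\pi\ell r/N)$. Because the coefficients $c_r$ are real, this gives $\operatorname{Re}\lambda_\ell=\sum_r c_r\left(\cos(2\pi\ell r/N)-1\right)$, which is the claimed formula after factoring out the minus sign.

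No step is a real obstacle. The only point needing care is that the shift identity is used with the cyclic representative $\langle i+r\rangle$, and that identity is already guaranteed by $\omega_N^{N}=1$ in Lemma \ref{lem:finite_fourier_basis}.
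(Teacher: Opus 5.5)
Your proof is correct and follows the paper's argument: you substitute the shift identity $v^{(\ell)}_{\langle i+r\rangle}=\omega_N^{\ell r}v^{(\ell)}_i$ from Lemma \ref{lem:finite_fourier_basis}, factor out $v^{(\ell)}_i$, and take real parts using that the $c_r$ are real. Your remark that $v^{(\ell)}_1=1$ makes the mode nonzero, and hence a genuine eigenvector, is a small detail the paper leaves implicit.
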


\begin{proof}
  By Lemma \ref{lem:finite_fourier_basis}, we have $v_{\langle i+r\rangle}^{(\ell)}=\omega_N^{\ell r}v_i^{(\ell)}$.
  Therefore
  \begin{equation*}
    \left(Lv^{(\ell)}\right)_i
    =\sum_{r=1}^{m}c_r\left(v_{\langle i+r\rangle}^{(\ell)}-v_i^{(\ell)}\right)
    =\sum_{r=1}^{m}c_r\left(\omega_N^{\ell r}-1\right)v_i^{(\ell)}.
  \end{equation*}
  This proves the eigenvalue formula.
  Taking real parts and using $\operatorname{Re}\omega_N^{\ell r}=\cos\left(2\pi\ell r/N\right)$ gives the stated expression for $\operatorname{Re}\lambda_\ell$.
\end{proof}

\begin{lemma}\label{lem:uniform_consensus_contraction}
  Let $b_{ij}:[0,\infty)\to[0,\infty)$ be continuous for $i,j\in[N]$ with $i\neq j$.
  Assume that there is a fixed strongly connected directed graph $G=\left([N],\mathcal E\right)$ and constants $b_\ast>0$ and $b^\ast>0$ such that
  \begin{equation*}
    b_\ast\leq b_{ij}(t)\leq b^\ast,\quad \left(j,i\right)\in\mathcal E,\quad t\geq0,
  \end{equation*}
  while $b_{ij}(t)=0$ whenever $\left(j,i\right)\notin\mathcal E$.
  If
  \begin{equation*}
    \dot z_i=\sum_{j\neq i}b_{ij}(t)\left(z_j-z_i\right),\quad i\in[N],
  \end{equation*}
  then there exist constants $C\geq1$ and $\lambda>0$, depending only on $G$, $b_\ast$, and $b^\ast$, such that
  \begin{equation*}
    \max_i z_i(t)-\min_i z_i(t)\leq C e^{-\lambda t}\left(\max_i z_i(0)-\min_i z_i(0)\right),\quad t\geq0.
  \end{equation*}
\end{lemma}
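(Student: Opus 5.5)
The plan is a standard contraction-on-a-window argument. First establish the maximum principle, then show that over any time window of fixed length $T$ the diameter shrinks by a factor $1-c$, with $c\in(0,1)$ and $T$ depending only on $G$, $b_\ast$, $b^\ast$. Iterating then gives $D(t)\le (1-c)^{\lfloor t/T\rfloor}D(0)$, which is the claim with $\lambda=-\log(1-c)/T$ and $C=(1-c)^{-1}$.

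\emph{Step 1 (maximum principle).} Write $M(t):=\max_i z_i(t)$ and $\mu(t):=\min_i z_i(t)$. At any index attaining the maximum, every term $b_{ij}(t)(z_j-z_i)$ is $\le0$. A standard upper Dini derivative argument then shows that $M$ is nonincreasing, and symmetrically that $\mu$ is nondecreasing. In particular $\mu(t_0)\le z_i(t)\le M(t_0)$ for all $t\ge t_0$.

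\emph{Step 2 (propagation along a spanning tree).} Fix $t_0$ and set $M:=M(t_0)$, $\mu:=\mu(t_0)$, $\Delta:=M-\mu$. Fix a vertex $i_0$. Since $G$ is strongly connected, there is a spanning out-tree rooted at $i_0$ of depth at most $N-1$. By the symmetry $z\mapsto -z$, we may assume $z_{i_0}(t_0)\le (M+\mu)/2$, so that $e_{i_0}(t_0)\ge\Delta/2$ for the gaps $e_i(t):=M-z_i(t)\ge0$.

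Put $\beta:=(N-1)b^\ast$. Then each gap satisfies
\begin{equation*}
  \dot e_i=\sum_{j\neq i}b_{ij}(t)\left(z_i-z_j\right)=-\sum_{j\ne i}b_{ij}(t)\,e_i+\sum_{j\ne i}b_{ij}(t)\,e_j\ge -\beta e_i+b_{ij_\ast}(t)\,e_{j_\ast}
\end{equation*}
for any chosen in-neighbour $j_\ast$, using $e_j\ge0$.

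For the root this gives $e_{i_0}(t)\ge \tfrac{\Delta}{2}e^{-\beta(t-t_0)}$. Inductively, suppose vertex $i$ has tree parent $p$ with $e_p\ge\delta_k$ on $[t_0+k\tau,\,t_0+(N-1)\tau]$. Then $\dot e_i\ge-\beta e_i+b_\ast\delta_k$ on that interval, hence
\begin{equation*}
  e_i(t)\ge \frac{b_\ast\delta_k}{\beta}\left(1-e^{-\beta(t-t_0-k\tau)}\right)\ge \frac{b_\ast\delta_k}{\beta}\left(1-e^{-\beta\tau}\right)
\end{equation*}
on $[t_0+(k+1)\tau,\,t_0+(N-1)\tau]$. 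The lower bound on $e_i$ used here comes from integrating from the start of the interval with $e_i\ge0$ there.

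Fix $\tau=1$ and $T=(N-1)\tau$, and define $\delta_0=\tfrac{\Delta}{2}e^{-\beta T}$ and $\delta_{k+1}=\kappa\delta_k$ with $\kappa=\tfrac{b_\ast}{\beta}(1-e^{-\beta})\in(0,1)$. We obtain $e_i(t_0+T)\ge c\Delta$ for all $i$, with $c=\tfrac12 e^{-\beta T}\kappa^{N-1}$.

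\emph{Step 3 (conclusion).} Step 2 gives $M(t_0+T)\le M-c\Delta$, while by Step 1 $\mu(t_0+T)\ge\mu$. Hence $\Delta(t_0+T)\le(1-c)\Delta(t_0)$. Iterating over consecutive windows, and using monotonicity of the diameter between windows, gives the exponential estimate with constants depending only on $N$, the tree structure of $G$, $b_\ast$, and $b^\ast$.

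The main obstacle is Step 2. The induction must produce a \emph{uniform} positive lower bound on all gaps $e_i$ simultaneously at the common final time $t_0+T$. Each lower bound therefore has to persist until the end of the window, not merely at the moment it is first obtained. This is why the bounds are stated on intervals extending to $t_0+T$ and why the decay factor $e^{-\beta T}$ is built into $\delta_0$.
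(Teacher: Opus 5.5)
Your proof is correct, but it takes a different route from the paper. The paper gives no direct argument. It invokes the exponential-consensus theorem of Shi and Johansson (Cor.~4.6, with the rate from Eq.~(4.21)) and only checks that the hypotheses translate:
\begin{itemize}
\item every edge of $G$ is a $\delta$-arc with $\delta=b_\ast$ on each interval $[t,t+1)$, so the time-varying graph is uniformly quasi-strongly $\delta$-connected with interval length $1$;
\item the pointwise bound $b_{ij}\le b^\ast$ gives the integral boundedness hypothesis with $M_0=b^\ast$.
\end{itemize}

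You instead give a self-contained comparison argument. The maximum principle makes the gaps $e_i=M(t_0)-z_i$ nonnegative. The root gap decays at most like $e^{-\beta(t-t_0)}$. The inequality $\dot e_i\ge-\beta e_i+b_\ast e_p$ then transmits a positive lower bound one level down a spanning out-tree per unit of time, so every gap is at least $c\Delta$ at time $t_0+N-1$.

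The details check out:
\begin{itemize}
\item $\kappa\in(0,1)$;
\item each lower bound is made to persist to the end of the window, as you arrange;
\item the reflection $z\mapsto-z$ may be applied separately on each window, because the one-window conclusion $\Delta(t_0+T)\le(1-c)\Delta(t_0)$ is invariant under it.
\end{itemize}

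What your route buys is transparency and fully explicit constants, $C=(1-c)^{-1}$ and $\lambda=-\log(1-c)/(N-1)$, depending only on $N$, $b_\ast$, and $b^\ast$. It also makes visible that only a rooted spanning out-tree is used, i.e., quasi-strong rather than strong connectivity.

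What the citation buys is brevity and a much more general statement: time-varying graphs, lower bounds needed only in integrated form over windows, and weaker regularity. The edgewise stability theorem does not need that generality, since it supplies pointwise bounds on a fixed graph.
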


This lemma is a specialization of the exponential-consensus result in \cite[Cor. 4.6]{S-J}, with the decay estimate given in \cite[Eq. (4.21)]{S-J}.
The system above is the noiseless case of the model considered there, and the continuity assumption made here is stronger than the regularity hypothesis in \cite[Sec. 3.1]{S-J}.
For every edge $\left(j,i\right)\in\mathcal E$ and every $t\geq0$, one has $\int_t^{t+1}b_{ij}(s)\,ds\geq b_\ast$.
Thus every edge of $G$ is a $\delta$-arc on every interval $\left[t,t+1\right)$ with $\delta=b_\ast$.
Since $b_{ij}(t)=0$ whenever $\left(j,i\right)\notin\mathcal E$, the graph formed by the $\delta$-arcs is exactly the fixed strongly connected graph $G$ and is therefore quasi-strongly connected.
Hence the associated time-varying weighted graph is uniformly quasi-strongly $\delta$-connected with connectivity interval length $T=1$ in the sense of \cite[Def. 3.2]{S-J}.
Moreover, the bound $b_{ij}(t)\leq b^\ast$ implies
\begin{equation*}
  \int_{t_1}^{t_2}b_{ij}(s)\,ds\leq b^\ast\left(t_2-t_1\right),
\end{equation*}
so the integral boundedness hypothesis holds with $M_0=b^\ast$.
The cited exponential-consensus estimate therefore gives the stated constants $C$ and $\lambda$, which depend only on $G$, $b_\ast$, and $b^\ast$.

We conclude this appendix with the uniform trigonometric positivity estimate used in the proof of Lemma \ref{lem:general_q_parameter_estimates}.
We first verify the finitely many cases not covered by the uniform estimates using exact rational calculations.
For $0\leq\xi\leq2$, define
\begin{equation*}
  \begin{aligned}
    \underline{c}(\xi)&:=1-\frac{\xi^2}{2!}+\frac{\xi^4}{4!}-\frac{\xi^6}{6!},
    &\overline{c}(\xi)&:=\underline{c}(\xi)+\frac{\xi^8}{8!},\\
    \underline{s}(\xi)&:=\xi-\frac{\xi^3}{3!}+\frac{\xi^5}{5!}-\frac{\xi^7}{7!}+\frac{\xi^9}{9!}-\frac{\xi^{11}}{11!},
    &\overline{s}(\xi)&:=\underline{s}(\xi)+\frac{\xi^{11}}{11!}.
  \end{aligned}
\end{equation*}
The alternating-series remainder bounds give
\begin{equation}\label{eq:finite_taylor_bounds}
  \underline{c}(\xi)\leq\cos\xi\leq\overline{c}(\xi),\quad
  \underline{s}(\xi)\leq\sin\xi\leq\overline{s}(\xi),\quad 0\leq\xi\leq2.
\end{equation}

\begin{lemma}\label{lem:finite_exact_verification}
  Let $\mathcal M_0:=\left\{3,5,6,7,8,9,10,11,12\right\}$ and, for each $m\in\mathcal M_0$, set $n_m:=2m+1$.
  Then, for every $m\in\mathcal M_0$,
  \begin{equation*}
    \sum_{r=1}^{m}r^2\underline{c}\left(\frac{4r}{n_m}\right)>\frac{n_m^3}{475},\quad
    \sum_{r=1}^{m}r^4\overline{c}\left(\frac{4r}{n_m}\right)<-\frac{n_m^5}{5000}.
  \end{equation*}
  Let $\mathcal N_0:=\left\{7,11,13,15,17,19\right\}$ and, for $n\in\mathcal N_0$, define
  \begin{equation*}
    \nu_n:=\left\lfloor\frac{4n}{21}+\frac12\right\rfloor,\quad
    r_n:=\left\lfloor\frac{\pi n}{8}\right\rfloor,\quad
    x_n:=\frac{21}{8n}.
  \end{equation*}
  Then, for every $n\in\mathcal N_0$,
  \begin{equation*}
    \left(2\nu_n+1\right)\left(x_n-\frac{x_n^3}{6}\right)>1,\quad
    2\underline{s}(2)-\overline{s}\left(\frac{4\nu_n+2}{n}\right)-\overline{s}\left(\frac{4r_n+2}{n}\right)>0.
  \end{equation*}
\end{lemma}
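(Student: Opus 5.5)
The plan is a direct finite verification in exact rational arithmetic. Every quantity in Lemma \ref{lem:finite_exact_verification} is a polynomial with rational coefficients evaluated at rational points, with no genuine trigonometric functions. This holds because $\underline{c}$, $\overline{c}$, $\underline{s}$, $\overline{s}$ are truncated Taylor polynomials, and their arguments $4r/n_m$, $x_n=21/(8n)$, $2$, $(4\nu_n+2)/n$, $(4r_n+2)/n$ are rational once the integers $\nu_n$ and $r_n$ are known. So each claimed inequality is a comparison of two explicit rationals, and the proof amounts to tabulating them.

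The first family is handled case by case over $m\in\mathcal M_0$. For each $m$, set $n_m=2m+1$ and form
\begin{equation*}
  S_2(m):=\sum_{r=1}^{m}r^2\underline{c}\left(\frac{4r}{n_m}\right),\quad
  S_4(m):=\sum_{r=1}^{m}r^4\overline{c}\left(\frac{4r}{n_m}\right).
\end{equation*}
Each is a finite sum of rationals with common denominator dividing $8!\,n_m^8$. I would compute these sums exactly and compare them with $n_m^3/475$ and $-n_m^5/5000$. Before doing so I would check that every argument satisfies $0\le 4r/n_m<2$, since $4m/(2m+1)<2$; this keeps the bounds \eqref{eq:finite_taylor_bounds} applicable in the later use, although the inequalities themselves are purely polynomial. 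Clearing denominators turns each claim into an integer inequality that can be confirmed by exact computer algebra, or tabulated in a short table listing $S_2$, $S_4$ and the two thresholds for each of the nine values of $m$.

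For the second family I first determine the integers. For $n=7,11,13,15,17,19$ I get $\nu_n=\lfloor 4n/21+1/2\rfloor=1,2,2,3,3,4$. For $r_n=\lfloor \pi n/8\rfloor$ the products $\pi n/8\approx 2.75,4.32,5.11,5.89,6.68,7.46$ give $r_n=2,4,5,5,6,7$, and rigorous enclosures $3.14<\pi<3.15$ suffice because none of these products is within $0.1$ of an integer. The first inequality is then the elementary check $(2\nu_n+1)(x_n-x_n^3/6)>1$. For example, at $n=7$ one has $x_7=3/8$, and $3\left(3/8-9/1024\right)\approx1.10>1$. The second inequality compares $2\underline{s}(2)\approx1.8186$ with $\overline{s}$ evaluated at $(4\nu_n+2)/n$ and $(4r_n+2)/n$. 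Both arguments lie in $[0,2]$; for $r_n$ this follows from $4r_n+2\le \pi n/2+2$, and it is checked directly for each $n$. The check therefore reduces to six exact evaluations of the degree-eleven polynomial $\overline{s}$.

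The main obstacle is bookkeeping rather than ideas. Some margins may be thin, for instance the $S_4$ bound at small $m$ such as $m=3$, or the $\overline{s}$ comparison when $(4r_n+2)/n$ is near $\pi/2$ where $\sin$ is maximal. In those cases only exact rational arithmetic, not floating-point estimates, constitutes a proof. I would therefore present the computation as an exact table of reduced fractions, or integer numerators after clearing denominators, and I would state explicitly that the floor values are certified by the $\pi$ enclosure.
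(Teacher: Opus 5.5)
Your proposal is correct and is essentially the paper's proof. Both reduce every inequality to an exact comparison of rationals once the floor values $(\nu_n,r_n)$ are fixed by a rational enclosure of $\pi$; the paper uses $333/106<\pi<355/113$ where you use $3.14<\pi<3.15$, and either suffices.

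Your values $(\nu_n,r_n)=(1,2),(2,4),(2,5),(3,5),(3,6),(4,7)$ agree with the paper's. The paper reports the smallest margins at $m=3$ for the first family and at $n=13$ and $n=19$ for the second. Your approach also handles these thin cases, because it insists on exact arithmetic rather than floating-point estimates.
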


\begin{proof}
  For each $m\in\mathcal M_0$, both quantities in the first display are rational.
  Exact substitution and comparison after clearing positive denominators give the first two inequalities, with the smallest margins occurring at $m=3$.
  The rational bounds $333/106<\pi<355/113$ determine
  \begin{equation*}
    \begin{aligned}
      \left(\nu_7,r_7\right)&=\left(1,2\right),\quad
      \left(\nu_{11},r_{11}\right)=\left(2,4\right),\quad
      \left(\nu_{13},r_{13}\right)=\left(2,5\right),\\
      \left(\nu_{15},r_{15}\right)&=\left(3,5\right),\quad
      \left(\nu_{17},r_{17}\right)=\left(3,6\right),\quad
      \left(\nu_{19},r_{19}\right)=\left(4,7\right).
    \end{aligned}
  \end{equation*}
  Once these floor values are fixed, both quantities in the second display are rational.
  Exact substitution and comparison after clearing positive denominators give the final two inequalities, with the smallest margins occurring at $n=13$ and $n=19$, respectively.
\end{proof}

\begin{lemma}\label{lem:uniform_trigonometric_positivity}
  Let $m\geq2$, set $n:=2m+1$, and suppose that $0<\varpi\leq4/n$.
  Define
  \begin{equation*}
    \Psi_{m,\varpi}(x):=\sum_{r=1}^{m}\cos\left(r\varpi\right)\left(1-\cos(rx)\right).
  \end{equation*}
  Then $\Psi_{m,\varpi}(x)>0$ for every $0<x<2\pi$.
\end{lemma}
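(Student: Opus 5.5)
The plan is to rewrite $\Psi_{m,\varpi}$ through the Dirichlet kernel and then split into a small-$x$ regime and a large-$x$ regime. First, since $\Psi_{m,\varpi}(2\pi-x)=\Psi_{m,\varpi}(x)$, it suffices to treat $0<x\leq\pi$. Set
\begin{equation*}
  \mathcal D_n(y):=\frac{\sin\left(ny/2\right)}{\sin\left(y/2\right)},
\end{equation*}
so that $\sum_{r=1}^m\cos(ry)=\frac12\left(\mathcal D_n(y)-1\right)$ by Lemma \ref{lem:finite_trig_sums}. Using $\cos(r\varpi)\cos(rx)=\frac12\left[\cos(r(x+\varpi))+\cos(r(x-\varpi))\right]$, we get the exact identity
\begin{equation*}
  \Psi_{m,\varpi}(x)=\frac12\left[\mathcal D_n(\varpi)-\frac12\left(\mathcal D_n(x+\varpi)+\mathcal D_n(x-\varpi)\right)\right].
\end{equation*}
Thus positivity means that the kernel at $\varpi$ dominates its average at $x\pm\varpi$. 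Because $n\varpi/2\leq2$, we have $\mathcal D_n(\varpi)\geq\sin(2)/\sin(\varpi/2)\geq\frac{n\sin 2}{2}$, which is of order $n$.

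\emph{Large $x$.} Whenever $\min\{|x-\varpi|,\,2\pi-|x+\varpi|\}$ is at least a fixed multiple of $1/n$, the crude bound $|\mathcal D_n(y)|\leq1/|\sin(y/2)|$ makes the average small compared with $\mathcal D_n(\varpi)$. Concretely, I would choose the threshold $x\geq\varpi+c/n$, with $c$ tuned so that
\begin{equation*}
  \frac{1}{\sin\left((x-\varpi)/2\right)}<\frac{n\sin 2}{2}
\end{equation*}
holds there. The $x+\varpi$ term is no worse, because $x+\varpi\leq\pi+4/n$.

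\emph{Small $x$.} For $x\lesssim\varpi+c/n$, i.e.\ $x=O(1/n)$, I would instead use a Taylor expansion in $x$, writing
\begin{equation*}
  1-\cos(rx)\geq\frac{r^2x^2}{2}-\frac{r^4x^4}{24}.
\end{equation*}
The cosine weights may be negative for large $r$, so I would keep the exact sums $\Sigma_2:=\sum_{r=1}^m r^2\cos(r\varpi)$ and $\Sigma_4:=\sum_{r=1}^m r^4\cos(r\varpi)$. The key claims to prove are $\Sigma_2>0$, of order $n^3$, and $\Sigma_4<0$ whenever $\varpi$ is near the endpoint $4/n$; for smaller $\varpi$ the expansion is easier, since all the cosine weights are positive when $m\varpi\leq\pi/2$. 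Once these hold, $\Psi\geq\frac{x^2}{2}\Sigma_2-\frac{x^4}{24}\Sigma_4+(\text{sixth-order remainder})$, where the sixth-order remainder is $O(n^7x^6)$. This remainder is dominated by $\Sigma_2x^2$ when $x\leq C/n$ for a suitable $C$; if the two regimes do not overlap, the range will be bridged by the argument described at the end. To establish the $\Sigma_2,\Sigma_4$ sign claims, I would compare the sums with Riemann integrals $\int_0^{1/2}t^k\cos(4t)\,dt$ for large $m$ (these have the right signs, with margin). For the finitely many small $m$, I would use the rational lower and upper Taylor bounds from Lemma \ref{lem:finite_exact_verification}, namely the sums with $\underline c(4r/n_m)$ and $\overline c(4r/n_m)$ for $m\in\mathcal M_0$; the remaining small values such as $m=2,4$ would be checked directly. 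I also need monotonicity in $\varpi$ to reduce to the extreme value $\varpi=4/n$, or else to run the estimate uniformly in $\varpi\in(0,4/n]$.

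The main obstacle I expect is matching the two regimes: the Taylor regime must reach out to $x\approx\varpi+c/n$, while the crude kernel bound only starts there, and near the endpoint $x\approx\varpi$ the term $\mathcal D_n(x-\varpi)$ is itself of order $n$. In this bridging range I would first try a sharper bound on the main lobe, $\mathcal D_n(y)\leq\frac{2\sin(ny/2)}{y}\cdot(1+O(y^2))$, and reduce to a sine-integral inequality of the form $2\sin(\cdot)-\sin(\cdot)-\sin(\cdot)>0$. For small $n$, this inequality should be exactly what the second half of Lemma \ref{lem:finite_exact_verification} certifies through the quantities $\nu_n$, $r_n$, $x_n$ for $n\in\mathcal N_0$; for large $n$, a uniform estimate with explicit constants would replace it.
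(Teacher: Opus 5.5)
You have the correct Dirichlet-kernel identity $\Psi_{m,\varpi}(x)=\frac12\bigl[\mathcal D_n(\varpi)-\frac12\bigl(\mathcal D_n(x+\varpi)+\mathcal D_n(x-\varpi)\bigr)\bigr]$. Your small-$x$ plan is essentially the paper's Step 2: sizes and signs of $\Sigma_2,\Sigma_4$ from a Riemann/midpoint comparison for large $m$ and the exact rational checks for $m\in\mathcal M_0$, then a sixth-order Taylor bound. The reduction to $\varpi=4/n$ takes one line, since $\partial_\varpi\Psi_{m,\varpi}(x)=-\sum_{r=1}^{m}r\sin(r\varpi)\left(1-\cos(rx)\right)<0$ because $0<r\varpi<2<\pi$. (One slip: $\mathcal D_n(\varpi)\geq\sin2/\sin(\varpi/2)$ is false when $n\varpi/2<\pi-2$. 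The bound $\mathcal D_n(\varpi)\geq n\sin2/2$ that you need follows instead from the monotonicity of $\sin(nz)/\sin z$ on $(0,\pi/n)$.)

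The genuine gap is the one you flag yourself: the two regimes do not meet, and the bridge is only announced. At $\varpi=4/n$ one has $\mathcal D_n(\varpi)\approx0.45\,n$. Even after averaging the crude bounds $1/\sin\bigl((x\pm\varpi)/2\bigr)$, you need roughly $n(x-\varpi)\geq2.8$, i.e.\ $nx\gtrsim6.8$; with your unaveraged threshold it is $nx\gtrsim8.4$. On the other side, the sixth-order Taylor bound with remainder $\frac{x^6}{720}\sum_r r^6$ stops giving positivity near $nx\approx6.1$ even with asymptotically exact moments, and the paper's rigorous constants reach only $nx=21/4$. On that window $\mathcal D_n(x-\varpi)$ is still of order $n$, so the crude kernel bound is useless. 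You offer only a tentative main-lobe estimate whose constants would have to be explicit and uniform in $n$. Your expectation that the second half of Lemma \ref{lem:finite_exact_verification} certifies such a main-lobe inequality is a misreading: those inequalities come from a different mechanism.

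The missing idea is Abel summation against the monotone weights, instead of bounding the kernels at $x\pm\varpi$ separately. Set $\kappa_r:=\cos(4r/n)$, which is strictly decreasing, and let $r_0:=\lfloor\pi n/8\rfloor$ be the last index with $\kappa_r\geq0$. Put $a_r:=1-\cos(rx)\in[0,2]$ and $S_k:=\sum_{r\leq k}a_r=k+\frac12-\frac{\sin((2k+1)x/2)}{2\sin(x/2)}$. Then $S_k\geq\max\{0,k-\nu_0\}$ for every $x\in[21/(4n),\pi]$, where $\nu_0=\lfloor4n/21+\frac12\rfloor$, and summation by parts gives the $x$-independent bound
\begin{equation*}
  \Psi_{m,4/n}(x)\geq\sum_{r=\nu_0+1}^{r_0}\kappa_r+2\sum_{r=r_0+1}^{m}\kappa_r=\frac{2\sin2-\sin\left((4\nu_0+2)/n\right)-\sin\left((4r_0+2)/n\right)}{2\sin(2/n)}.
\end{equation*}
This is where the inequality of the form $2\sin(\cdot)-\sin(\cdot)-\sin(\cdot)>0$ and the quantities $\nu_n$, $r_n$, $x_n=21/(8n)$ come from. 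Since the bound holds on all of $[21/(4n),\pi]$, it meets the Taylor range exactly and no bridging is needed.

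Finally, ``checking $m=2,4$ directly'' has to be a full-range argument rather than a moment check. For $m=2$ your claim $\Sigma_4<0$ is false, since $\cos(4/5)+16\cos(8/5)>0$. For $m=4$ the summation-by-parts bound above is negative, since $2\sin2-\sin(10/9)-\sin(14/9)<0$. The paper handles both cases by dividing by $1-\cos x$ and proving positivity of the resulting polynomial in $\cos x$.
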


\begin{proof}
  \textbf{Step 1. Reduction to the endpoint and the cases $\bm{m=2,4}$.}
  For fixed $x\in(0,2\pi)$,
  \begin{equation*}
    \frac{\partial}{\partial\varpi}\Psi_{m,\varpi}(x)
    =-\sum_{r=1}^{m}r\sin\left(r\varpi\right)\left(1-\cos(rx)\right)<0,
  \end{equation*}
  because $0<r\varpi\leq4m/\left(2m+1\right)<2<\pi$ and the $r=1$ term in the displayed derivative is strictly negative.
  It is therefore enough to prove $\Psi_{m,4/n}(x)>0$.
  Set $\kappa_r:=\cos\left(4r/n\right)$ for $r\in[m]$.
  Since $\Psi_{m,4/n}(2\pi-x)=\Psi_{m,4/n}(x)$, it suffices to consider $0<x\leq\pi$.
  Suppose first that $m=2$, so $n=5$, and set $y:=\cos x$.
  Since $\pi/2<8/5<\pi$, one has $\kappa_2<0$, while $0\leq1+y\leq2$.
  Using \eqref{eq:finite_taylor_bounds} and an exact rational calculation, we obtain
  \begin{equation*}
    \frac{\Psi_{2,4/5}(x)}{1-\cos x}
    =\kappa_1+2\kappa_2\left(1+y\right)
    \geq\underline{c}\left(\frac45\right)+4\underline{c}\left(\frac85\right)>0.
  \end{equation*}
  Hence $\Psi_{2,4/5}(x)>0$.
  Suppose next that $m=4$, so $n=9$, and set $s:=\left(1+\cos x\right)/2$.
  Since $s\in[0,1]$, we express the resulting cubic polynomial in the degree-three Bernstein basis, whose elements are nonnegative on this interval.
  Direct expansion gives
  \begin{equation*}
    \frac{\Psi_{4,4/9}(x)}{1-\cos x}
    =b_0\left(1-s\right)^3+3b_1s\left(1-s\right)^2+3b_2s^2\left(1-s\right)+b_3s^3,
  \end{equation*}
  where
  \begin{equation*}
    \begin{aligned}
      b_0&:=\kappa_1+\kappa_3,
      &b_1&:=\frac{3\kappa_1+4\kappa_2-5\kappa_3+16\kappa_4}{3},\\
      b_2&:=\frac{3\kappa_1+8\kappa_2+3\kappa_3-32\kappa_4}{3},
      &b_3&:=\kappa_1+4\kappa_2+9\kappa_3+16\kappa_4.
    \end{aligned}
  \end{equation*}
  Here $b_0>0$ and $b_2>0$ because $\kappa_1,\kappa_2,\kappa_3>0$ and $\kappa_4<0$.
  Applying \eqref{eq:finite_taylor_bounds} term by term gives
  \begin{equation*}
    \begin{aligned}
      b_1&\geq\frac{1}{3}\left(
      3\underline{c}\left(4/9\right)+4\underline{c}\left(8/9\right)
      -5\overline{c}\left(4/3\right)+16\underline{c}\left(16/9\right)\right)>0,\\
      b_3&\geq\underline{c}\left(4/9\right)+4\underline{c}\left(8/9\right)
      +9\underline{c}\left(4/3\right)+16\underline{c}\left(16/9\right)>0.
    \end{aligned}
  \end{equation*}
  Thus all four Bernstein coefficients are positive, and $\Psi_{4,4/9}(x)>0$.

  \medskip
  \noindent
  \textbf{Step 2. The small-$\bm{x}$ range.}
  We now assume that $m=3$ or $m\geq5$ and first consider
  \begin{equation}\label{eq:uniform_small_x_range}
    0<x\leq\frac{21}{4n}.
  \end{equation}
  Then $0<rx<21/8<3$ for $r\in[m]$, and the alternating-series remainder estimate gives
  \begin{equation*}
    \left|1-\cos(rx)-\frac{\left(rx\right)^2}{2!}+\frac{\left(rx\right)^4}{4!}\right|
    \leq\frac{\left(rx\right)^6}{6!}.
  \end{equation*}
  We first establish the following three estimates:
  \begin{equation}\label{eq:uniform_moment_estimates}
    \sum_{r=1}^{m}r^2\kappa_r>\frac{n^3}{475},\quad
    \sum_{r=1}^{m}r^4\kappa_r<-\frac{n^5}{5000},\quad
    \sum_{r=1}^{m}r^6<\frac{n^7}{896}.
  \end{equation}
  The third estimate follows from
  \begin{equation*}
    \sum_{r=1}^{m}r^6=\frac{n\left(n-1\right)\left(n+1\right)\left(3n^4-18n^2+31\right)}{2688}, \quad
    \frac{n^7}{896}-\sum_{r=1}^{m}r^6=\frac{n\left(21n^4-49n^2+31\right)}{2688}>0.
  \end{equation*}
  To prove the first two estimates, set $p_2(\xi):=\xi^2\cos(2\xi)$ and $p_4(\xi):=\xi^4\cos(2\xi)$.
  By the definitions of $p_2$, $p_4$, and $\kappa_r$,
  \begin{equation*}
    \sum_{r=1}^{m}r^2\kappa_r=\frac{n^3}{8}\frac{2}{n}\sum_{r=1}^{m}p_2\left(\frac{2r}{n}\right),\quad
    \sum_{r=1}^{m}r^4\kappa_r=\frac{n^5}{32}\frac{2}{n}\sum_{r=1}^{m}p_4\left(\frac{2r}{n}\right).
  \end{equation*}
  Here $2/n$ is the mesh width, and the points $2r/n$, $r\in[m]$, are the midpoints of the $m$ equal subintervals of $[1/n,1]$.
  Direct integration gives
  \begin{equation*}
    \int_0^1p_2(\xi)\,d\xi =-\int_0^1p_4(\xi)\,d\xi =\frac{\cos2}{2}+\frac{\sin2}{4} =:\mathcal I_0.
  \end{equation*}
  The alternating-series lower bounds for $\cos2$ through the $2^{10}/10!$ term and for $\sin2$ through the $2^{11}/11!$ term give
  \begin{equation*}
    \mathcal I_0 \geq\frac{1}{2}\left(1-\frac{2^2}{2!}+\frac{2^4}{4!}-\frac{2^6}{6!}+\frac{2^8}{8!}-\frac{2^{10}}{10!}\right)+\frac{1}{4}\underline{s}(2)
    =\frac{3001}{155925}>\frac{12}{625}.
  \end{equation*}
  Direct differentiation, together with $\left|1-2\xi^2\right|\leq1$ and $\xi^2\left(3-\xi^2\right)\leq2$ on $[0,1]$, gives
  \begin{equation*}
    \left|p_2''(\xi)\right|\leq10,\quad
    \left|p_4''(\xi)\right|\leq24,\quad 0\leq\xi\leq1.
  \end{equation*}
  Since $n\geq7$ under the present assumptions, one has
  $\cos(2\xi)\geq0$ on $[0,1/n]$.
  Hence
  \begin{equation*}
    0\leq\int_0^{1/n}p_2(\xi)\,d\xi \leq\frac{1}{3n^3}, \quad
    p_4(\xi)\geq0,\quad 0\leq\xi\leq\frac{1}{n}.
  \end{equation*}
  The composite midpoint estimate therefore gives
  \begin{equation*}
    \frac{2}{n}\sum_{r=1}^{m}p_2\left(\frac{2r}{n}\right) \geq\mathcal I_0-\frac{1}{3n^3}-\frac{5}{3n^2}, \quad
    \frac{2}{n}\sum_{r=1}^{m}p_4\left(\frac{2r}{n}\right) \leq-\mathcal I_0+\frac{4}{n^2}.
  \end{equation*}
  If $n\geq27$, then
  \begin{equation*}
    \frac{12}{625}-\frac{1}{3n^3}-\frac{5}{3n^2}>\frac{8}{475}, \quad
    -\frac{12}{625}+\frac{4}{n^2}<-\frac{4}{625}.
  \end{equation*}
  It follows that the first two estimates in \eqref{eq:uniform_moment_estimates} hold for $m\geq13$.
  If $m\in\mathcal M_0$, then $n=n_m$ and
  \begin{equation*}
    0<\frac{4r}{n}\leq\frac{4m}{2m+1}<2,\quad r\in[m].
  \end{equation*}
  Hence \eqref{eq:finite_taylor_bounds} gives
  \begin{equation*}
    \underline{c}\left(\frac{4r}{n}\right) \leq\kappa_r \leq\overline{c}\left(\frac{4r}{n}\right),\quad r\in[m].
  \end{equation*}
  Multiplying the lower bound by $r^2$, the upper bound by $r^4$, and summing over $r\in[m]$, the first two estimates follow from Lemma \ref{lem:finite_exact_verification}.
  Thus all three estimates in \eqref{eq:uniform_moment_estimates} hold for every $m=3$ or $m\geq5$.
  Since $\left|\kappa_r\right|\leq1$, the Taylor remainder estimate and \eqref{eq:uniform_moment_estimates} yield
  \begin{equation*}
    \Psi_{m,4/n}(x)>\frac{n^3x^2}{950}+\frac{n^5x^4}{120000}-\frac{n^7x^6}{645120}.
  \end{equation*}
  The right-hand side can be written as
  \begin{equation*}
    n\left(nx\right)^2 \left[\frac{1}{950}+\frac{\left(nx\right)^2}{120000}-\frac{\left(nx\right)^4}{645120}\right].
  \end{equation*}
  As a function of $z:=\left(nx\right)^2$, the expression in brackets is a concave quadratic polynomial.
  It therefore attains its minimum on $0\leq z\leq\left(21/4\right)^2$ at an endpoint.
  Its value at $z=0$ is $1/950$, while an exact rational calculation shows that its value at $z=\left(21/4\right)^2$ is greater than $1/10000$.
  Hence $\Psi_{m,4/n}(x)>0$ throughout \eqref{eq:uniform_small_x_range}.

  \medskip
  \noindent
  \textbf{Step 3. The remaining $\bm{x}$-range.}
  It remains to consider
  \begin{equation}\label{eq:uniform_remaining_x_range}
    \frac{21}{4n}\leq x\leq\pi.
  \end{equation}
  Set $r_0:=\left\lfloor\pi n/8\right\rfloor$, $a_r:=1-\cos(rx)$, and $S_k:=\sum_{r=1}^{k}a_r$.
  The sequence $\kappa_r$ is strictly decreasing, with $\kappa_r\geq0$ for $1\leq r\leq r_0$ and $\kappa_r<0$ for $r_0<r\leq m$.
  The finite cosine-sum identity gives
  \begin{equation}\label{eq:uniform_partial_sum_lower_bound}
    S_k=k+\frac12-\frac{\sin\left(\left(2k+1\right)x/2\right)}{2\sin(x/2)}
    \geq k+\frac12-\frac{1}{2\sin(x/2)}.
  \end{equation}
  Suppose first that $n\geq21$ and set $\nu_0:=\left\lfloor4n/21+1/2\right\rfloor$ and $x_0:=21/\left(8n\right)$.
  Since $n$ is odd, the fractional part of $4n/21+1/2$ is a nonzero multiple of $1/42$, and hence $\nu_0\geq4n/21-1/2+1/42$.
  Using $\sin x_0\geq x_0-x_0^3/6$, we obtain
  \begin{equation*}
    \frac{1}{2\sin x_0}-\frac{1}{2x_0}\leq\frac{x_0}{12\left(1-x_0^2/6\right)}\leq\frac4{383}<\frac1{42}.
  \end{equation*}
  Since $1/\left(2x_0\right)=4n/21$, it follows that $1/\left(2\sin x_0\right)-1/2\leq\nu_0$.
  Also, $\nu_0\leq4n/21+1/2\leq3n/14<\pi n/8$, so $\nu_0\leq r_0$.
  By \eqref{eq:uniform_remaining_x_range} and the monotonicity of sine on $[0,\pi/2]$,
  \begin{equation*}
    \frac{1}{2\sin(x/2)}-\frac12\leq\nu_0,
  \end{equation*}
  and \eqref{eq:uniform_partial_sum_lower_bound} gives $S_k\geq\max\left\{0,k-\nu_0\right\}$.
  Since $\kappa_k-\kappa_{k+1}>0$, summation by parts and the lower bound for $S_k$ give
  \begin{equation*}
    \sum_{r=1}^{r_0}\kappa_ra_r
    =\kappa_{r_0}S_{r_0}+\sum_{k=1}^{r_0-1}\left(\kappa_k-\kappa_{k+1}\right)S_k
    \geq\sum_{r=\nu_0+1}^{r_0}\kappa_r.
  \end{equation*}
  Since $0\leq a_r\leq2$ and $\kappa_r<0$ for $r>r_0$,
  \begin{equation*}
    \Psi_{m,4/n}(x)\geq\sum_{r=\nu_0+1}^{r_0}\kappa_r+2\sum_{r=r_0+1}^{m}\kappa_r.
  \end{equation*}
  For $\mathcal T(a):=\sum_{r=a}^{m}\cos\left(4r/n\right)$, the finite cosine-sum formula gives
  \begin{equation*}
    \mathcal T(a)=\frac{\sin2-\sin\left(\left(4a-2\right)/n\right)}{2\sin\left(2/n\right)}.
  \end{equation*}
  Consequently,
  \begin{equation*}
    \Psi_{m,4/n}(x)\geq\frac{2\sin2-\sin\left(\left(4\nu_0+2\right)/n\right)-\sin\left(\left(4r_0+2\right)/n\right)}{2\sin\left(2/n\right)}.
  \end{equation*}
  Here $\left(4\nu_0+2\right)/n\leq20/21$ and $\sin\left(\left(4r_0+2\right)/n\right)\leq1$.
  The alternating-series bounds give $2\sin2-1-\sin\left(20/21\right)>0$.
  Thus $\Psi_{m,4/n}(x)>0$ when $n\geq21$.
  It remains to treat $m\in\left\{3,5,6,7,8,9\right\}$ in \eqref{eq:uniform_remaining_x_range}.
  Then $n\in\mathcal N_0$, and Lemma \ref{lem:finite_exact_verification} gives
  \begin{equation*}
    \left(2\nu_n+1\right)\left(x_n-\frac{x_n^3}{6}\right)>1.
  \end{equation*}
  Since $x/2\geq x_n$ and the sine function is increasing on $\left[0,\pi/2\right]$, the alternating Taylor bound gives
  \begin{equation*}
    \sin\left(\frac{x}{2}\right)
    \geq\sin x_n
    \geq x_n-\frac{x_n^3}{6}
    >\frac{1}{2\nu_n+1}.
  \end{equation*}
  Hence
  \begin{equation*}
    \frac{1}{2\sin(x/2)}-\frac12<\nu_n.
  \end{equation*}
  The preceding summation-by-parts argument applies with $\nu_0=\nu_n$ and $r_0=r_n$.
  The values of $\left(\nu_n,r_n\right)$ listed in the proof of Lemma \ref{lem:finite_exact_verification} show that
  \begin{equation*}
    0\leq\frac{4\nu_n+2}{n}\leq2,\quad
    0\leq\frac{4r_n+2}{n}\leq2.
  \end{equation*}
  Lemma \ref{lem:finite_exact_verification} and \eqref{eq:finite_taylor_bounds} therefore give
  \begin{equation*}
    2\sin2-\sin\left(\frac{4\nu_n+2}{n}\right)-\sin\left(\frac{4r_n+2}{n}\right)
    \geq 2\underline{s}(2)-\overline{s}\left(\frac{4\nu_n+2}{n}\right)-\overline{s}\left(\frac{4r_n+2}{n}\right)>0.
  \end{equation*}
  Hence the right-hand side of the preceding lower bound is strictly positive, and therefore $\Psi_{m,4/n}(x)>0$.
  This completes the proof.
\end{proof}

\section{Deferred proofs of model-specific lemmas}\label{sec:model_lemma_proofs}

\setcounter{equation}{0}

This appendix contains the deferred proofs of the model-specific lemmas stated in Section \ref{sec:stability}.
It also records the auxiliary cosine-sum representation used in the proofs for the $2$-twisted classification.
The deferred proofs are arranged in the order in which the corresponding lemmas appear in Section \ref{sec:stability}.
We first treat the Fourier and quotient framework and the general parameter estimates, and then the lemmas for the $1$-twisted and $2$-twisted classifications.

\begin{proof}[\textbf{Proof of Lemma \ref{lem:forward_model_fourier_reduction}}]
  This is Lemma \ref{lem:circulant_diagonalization} with $c_r=K\cos\left(2\pi q r/N\right)$ for $r\in[m]$.
  The expression for the real part follows from the same lemma.
  The phase-shift mode is $v^{(0)}=\mathbf 1$.
  If $\ell\in[N-1]$, Lemma \ref{lem:finite_fourier_basis} gives
  \begin{equation*}
    \sum_{i=1}^{N}v_i^{(\ell)}=0.
  \end{equation*}
  Taking real and imaginary parts shows that the corresponding real Fourier subspace has zero sum and is therefore transverse to $\operatorname{span}\left\{\mathbf 1\right\}$.
  Invariance follows from the eigenvalue relation and the conjugate symmetry of the real operator.
\end{proof}

\begin{proof}[\textbf{Proof of Lemma \ref{lem:quotient_linearized_criterion}}]
  The phase-shift invariance of the vector field induces a smooth quotient system on $\mathcal Q$, in which the orbit $\mathcal O_\phi$ is represented by the equilibrium $\Pi(0)$.
  The linearization of this quotient system is the operator induced by $J$ on $\mathcal Q$.
  To represent the quotient locally, choose a complementary hyperplane $H$ to $\operatorname{span}\left\{\mathbf 1\right\}$ and write $u=\sigma\mathbf 1+h$, where $h\in H$.
  For example, one may take
  \begin{equation*}
    H:=\left\{h\in\mathbb R^N:\sum_{i=1}^{N}h_i=0\right\}.
  \end{equation*}
  The restriction of $\Pi$ to $H$ is a linear isomorphism from $H$ onto $\mathcal Q$. The hyperplane $H$ is used only as a coordinate slice and need not be invariant under the directed dynamics.
  If $0$ is a simple eigenvalue of $J$ and every other eigenvalue has strictly negative real part, then the quotient spectrum lies in the open left half-plane.
  Lemma \ref{lem:standard_linearized_criterion} gives exponential stability of the reduced equilibrium.
  Since $\left\|\cdot\right\|_D$ is a norm on the finite-dimensional space $\mathcal Q$, equivalent to every other norm, this is precisely local orbital exponential stability in quotient diameter.
  If $J$ has an eigenvalue with strictly positive real part, the corresponding spectral direction is not contained in $\operatorname{span}\left\{\mathbf 1\right\}$ and therefore descends to an unstable direction in $\mathcal Q$.
  Lemma \ref{lem:standard_linearized_criterion} then gives nonlinear instability of the reduced equilibrium, and hence nonlinear instability of $\mathcal O_\phi$ in the quotient sense.
\end{proof}

\begin{proof}[\textbf{Proof of Lemma \ref{lem:general_q_parameter_estimates}}]
  Set
  \begin{equation*}
    g_q:=\gcd(N,q),\quad N_q:=\frac{N}{g_q},\quad
    \widehat q:=\frac{q}{g_q},\quad
    \widehat q^\sharp:=\min\left\{\widehat q,N_q-\widehat q\right\}.
  \end{equation*}
  Then $\gcd\left(N_q,\widehat q\right)=1$ and $\widehat q^\sharp=q^\sharp/g_q$.
  We first prove part (1).
  Set $\varpi_q:=2\pi q^\sharp/N$.
  Since $q\equiv\pm q^\sharp\pmod N$ and the cosine function is even,
  \begin{equation*}
    \Gamma_{N,m,q}^{(\ell)}
    =\sum_{r=1}^{m}\cos\left(r\varpi_q\right)\left(1-\cos\left(\frac{2\pi\ell r}{N}\right)\right),
    \quad \ell\in[N-1].
  \end{equation*}
  If $m=1$, the assumed lower bound on $N$ gives $0<\varpi_q\leq4/3<\pi/2$.
  Hence $\cos\left(\varpi_q\right)>0$, and the displayed sum is strictly positive for every $\ell\in[N-1]$.
  If $m\geq2$, the parameter bound gives $0<\varpi_q\leq4/\left(2m+1\right)$.
  The displayed sum is then $\Psi_{m,\varpi_q}\left(2\pi\ell/N\right)$, so Lemma \ref{lem:uniform_trigonometric_positivity} yields $\Gamma_{N,m,q}^{(\ell)}>0$ for every $\ell\in[N-1]$.
  To verify existence, divide the parameter bound by $g_q$ to obtain
  \begin{equation*}
    N_q\geq\frac{\pi}{2}\left(2m+1\right)\widehat q^\sharp
    \geq\frac{\pi}{2}\left(2m+1\right)>m+1.
  \end{equation*}
  Hence $N_q$ divides neither $m$ nor $m+1$.
  Since $\gcd\left(N_q,\widehat q\right)=1$, this is equivalent to $N\nmid mq$ and $N\nmid\left(m+1\right)q$.
  Thus the existence conditions also hold.

  We next prove part (2).
  Dividing the assumed inequality by $g_q$ gives
  \begin{equation*}
    N_q\leq\frac{8m+3}{3}.
  \end{equation*}
  The two divisibility assumptions are equivalent to $N_q\nmid m$ and $N_q\nmid m+1$ and in particular imply $N_q\geq3$.
  Define the $N_q$-periodic sequence
  \begin{equation*}
    h_r:=\cos\left(\frac{2\pi\widehat q r}{N_q}\right)
    \left(1-\cos\left(\frac{2\pi\widehat q r}{N_q}\right)\right),\quad r\in\mathbb Z.
  \end{equation*}
  Then $\Gamma_{N,m,q}^{(q)}=\sum_{r=1}^{m}h_r$ and $h_{N_q-r}=h_r$.
  Since multiplication by $\widehat q$ permutes the nonzero residue classes modulo $N_q$,
  \begin{equation*}
    \sum_{r=1}^{N_q-1}h_r
    =\sum_{r=1}^{N_q-1}\cos\left(\frac{2\pi r}{N_q}\right)
    -\sum_{r=1}^{N_q-1}\cos^2\left(\frac{2\pi r}{N_q}\right)
    =-\frac{N_q}{2}.
  \end{equation*}
  Moreover,
  \begin{equation}\label{eq:general_q_primitive_pointwise_bounds}
    -2\leq h_r\leq\frac14,
  \end{equation}
  because $u\left(1-u\right)\in[-2,1/4]$ for $u\in[-1,1]$.
  Write $m=a_0N_q+s$, where $a_0\geq0$ and $0\leq s\leq N_q-1$.
  The divisibility exclusions give $1\leq s\leq N_q-2$.
  If $a_0\geq1$, then
  \begin{equation*}
    \sum_{r=1}^{m}h_r=-\frac{a_0N_q}{2}+\sum_{r=1}^{s}h_r
    \leq-\frac{N_q}{2}+\frac{N_q-2}{4}=-\frac{N_q+2}{4}\leq-\frac54.
  \end{equation*}
  It remains to consider $a_0=0$, so $m=s\leq N_q-2$.
  Suppose first that $N_q=2M+1$.
  Symmetry gives $\sum_{r=1}^{M}h_r=-N_q/4$.
  If $m\geq M$, then
  \begin{equation*}
    \sum_{r=1}^{m}h_r\leq-\frac{N_q}{4}+\frac{m-M}{4}
    \leq-\frac{M+2}{4}\leq-\frac34.
  \end{equation*}
  If $m<M$, then \eqref{eq:general_q_primitive_pointwise_bounds} gives
  \begin{equation*}
    \sum_{r=1}^{m}h_r=-\frac{N_q}{4}-\sum_{r=m+1}^{M}h_r
    \leq\frac{3N_q}{4}-1-2m\leq-\frac14,
  \end{equation*}
  where the last inequality follows from $3N_q\leq8m+3$.

  Suppose next that $N_q=2M$.
  Since $\widehat q$ is odd, $h_M=-2$, and symmetry gives $\sum_{r=1}^{M}h_r=-N_q/4-1$.
  If $m\geq M$, then
  \begin{equation*}
    \sum_{r=1}^{m}h_r\leq-\frac{N_q}{4}-1+\frac{m-M}{4}
    \leq-\frac{M+6}{4}\leq-2.
  \end{equation*}
  If $m<M$, then
  \begin{equation*}
    \sum_{r=1}^{m}h_r=-\frac{N_q}{4}-1-\sum_{r=m+1}^{M}h_r
    \leq\frac{3N_q}{4}-1-2m\leq-\frac14.
  \end{equation*}
  Thus $\Gamma_{N,m,q}^{(q)}\leq-1/4$ in every case.
\end{proof}

\begin{proof}[\textbf{Proof of Lemma \ref{lem:q1_first_mode_positive_range}}]
  First suppose that $N\geq4m+1$.
  Then $0<2\pi r/N<\pi/2$ for every $r\in[m]$.
  Hence $\cos\left(2\pi r/N\right)>0$ and $1-\cos\left(2\pi r/N\right)>0$ for every $r\in[m]$, and therefore $\Gamma_{N,m,1}^{(1)}>0$.
  It remains to consider $3m+2\leq N\leq4m$.
  Set $n:=2m+1$, $y:=\pi/N$, and $\upsilon:=ny$.
  Then $N\geq3m+2=\left(3n+1\right)/2$ and $N\leq4m=2n-2$.
  Hence $\pi/2<\upsilon<2\pi/3$.
  In particular, $\cos\upsilon<0$.
  Since $0<\cos y<1$, we have
  \begin{equation*}
    2-\frac{\cos\upsilon}{\cos y}>2-\cos\upsilon.
  \end{equation*}
  Also, $\sin y<y=\upsilon/n$.
  Using \eqref{eq:q1_first_mode_gamma_closed_form_ny}, we obtain
  \begin{equation*}
    4\Gamma_{N,m,1}^{(1)}
    >n\left[\frac{\sin\upsilon}{\upsilon}\left(2-\cos\upsilon\right)-1\right].
  \end{equation*}
  Define $H(x):=\sin x\left(2-\cos x\right)-x$.
  Then
  \begin{equation*}
    H'(x)=2\cos x\left(1-\cos x\right)\leq0,\quad x\in\left[\frac{\pi}{2},\frac{2\pi}{3}\right].
  \end{equation*}
  Therefore $H$ is decreasing on this interval, and
  \begin{equation*}
    H(x)\geq H\left(\frac{2\pi}{3}\right)=\frac{5\sqrt{3}}{4}-\frac{2\pi}{3}>0.
  \end{equation*}
  Taking $x=\upsilon$ gives
  \begin{equation*}
    \frac{\sin\upsilon}{\upsilon}\left(2-\cos\upsilon\right)>1.
  \end{equation*}
  Thus $\Gamma_{N,m,1}^{(1)}>0$.
\end{proof}

\begin{proof}[\textbf{Proof of Lemma \ref{lem:q1_first_mode_negative_range}}]
  We first handle the small values of $m$.
  There is no integer $N$ satisfying $2m+2\leq N\leq2.9m$ when $m=2$.
  If $m=3$, then the only such value is $N=8$.
  If $m=4$, then the only such values are $N=10$ and $N=11$.
  For a fixed $N$, set
  \begin{equation*}
    T_1:=\sum_{r=1}^{N-1}\cos\left(\frac{2\pi r}{N}\right)\left(1-\cos\left(\frac{2\pi r}{N}\right)\right).
  \end{equation*}
  By discrete orthogonality, $T_1=-1-(N-2)/2=-N/2$.
  When $N=2m+2$, the unique middle index is $r=N/2$, and its contribution to $T_1$ is $-2$.
  Pairing the first and last $m$ terms gives $T_1=2\Gamma_{N,m,1}^{(1)}-2$.
  Hence $\Gamma_{N,m,1}^{(1)}=1-N/4<0$.
  This covers $\left(m,N\right)=\left(3,8\right)$ and $\left(4,10\right)$.
  For $\left(m,N\right)=\left(4,11\right)$, the middle indices are $5$ and $6$.
  The same decomposition gives
  \begin{equation*}
    \Gamma_{11,4,1}^{(1)}
    =-\frac{11}{4}
    +\cos\left(\frac{\pi}{11}\right)\left(1+\cos\left(\frac{\pi}{11}\right)\right)<-\frac{11}{4}+2<0.
  \end{equation*}
  It remains to consider $m\geq5$.
  Set $n:=2m+1$, $y:=\pi/N$, and $\upsilon:=ny$.
  Since $N\geq2m+2=n+1$, we have $0<\upsilon<\pi$.
  Since $N\leq2.9m=29m/10$, we also have
  \begin{equation*}
    \upsilon=\frac{\left(2m+1\right)\pi}{N}
    \geq \frac{10\left(2m+1\right)\pi}{29m}
    =\left(\frac{20}{29}+\frac{10}{29m}\right)\pi>\frac{20\pi}{29}.
  \end{equation*}
  Thus $20\pi/29<\upsilon<\pi$.
  Moreover, $y\leq\pi/\left(2m+2\right)\leq\pi/12$.
  We claim that, for $x\in[2\pi/3,\pi]$ and $1<\beta<2$, the function
  \begin{equation*}
    f_\beta(x):=\frac{\sin x}{x}\left(2-\beta\cos x\right)
  \end{equation*}
  is strictly decreasing.
  Indeed, writing $c:=-\cos x$ and $s:=\sin x$, one has $c\geq1/2$ and
  \begin{equation*}
    x^2f_\beta'(x)=\beta xs^2-\left(xc+s\right)\left(2+\beta c\right).
  \end{equation*}
  Since $s^2=1-c^2$ and $xc+s\geq xc$, it is enough to prove $\beta\left(1-c^2\right)<c\left(2+\beta c\right)$.
  This follows from $\beta<2$ and $c\geq1/2$.
  Hence $f_\beta'(x)<0$.
  Using $\sin y\geq y\left(1-y^2/6\right)$ and $\cos y\geq1-y^2/2$ on $0<y\leq\pi/12$, set
  \begin{equation*}
    \alpha_0:=\frac{1}{1-\pi^2/864},\quad \beta_0:=\frac{1}{1-\pi^2/288}.
  \end{equation*}
  Then $1<\beta_0<2$, $y/\sin y\leq\alpha_0$, and $1/\cos y\leq\beta_0$.
  Since $\cos\upsilon<0$, we obtain
  \begin{equation*}
    \frac{1}{n}\frac{\sin\upsilon}{\sin y}
    \left(2-\frac{\cos\upsilon}{\cos y}\right)
    \leq\alpha_0\frac{\sin\upsilon}{\upsilon}\left(2-\beta_0\cos\upsilon\right)
    =\alpha_0 f_{\beta_0}(\upsilon).
  \end{equation*}
  The monotonicity just proved and $\upsilon>20\pi/29$ imply
  \begin{equation*}
    \alpha_0 f_{\beta_0}(\upsilon)<\alpha_0 f_{\beta_0}\left(\frac{20\pi}{29}\right).
  \end{equation*}
  Put $z:=9\pi/29$.
  Since $\sin\left(20\pi/29\right)=\sin z$ and $-\cos\left(20\pi/29\right)=\cos z$, we have
  \begin{equation*}
    \alpha_0 f_{\beta_0}\left(\frac{20\pi}{29}\right)
    =\alpha_0\frac{9}{20}\frac{\sin z}{z}\left(2+\beta_0\cos z\right).
  \end{equation*}
  Using $333/106<\pi<355/113$, set
  \begin{equation*}
    \bar{\alpha}_0:=\left(1-\frac{1}{864}\left(\frac{355}{113}\right)^2\right)^{-1},\quad
    \bar{\beta}_0:=\left(1-\frac{1}{288}\left(\frac{355}{113}\right)^2\right)^{-1}.
  \end{equation*}
  Then $\alpha_0<\bar{\alpha}_0<175/173$ and $\beta_0<\bar{\beta}_0<29/28$.
  Also, since $z=9\pi/29$ and $333/106<\pi<355/113$, we have $77/79<z<1$.
  Using the alternating-series remainder bounds for the Maclaurin expansions of $\sin z/z$ and $\cos z$, and noting that
  \begin{equation*}
    1-\frac{\xi^2}{6}+\frac{\xi^4}{120}
    \quad\text{and}\quad
    1-\frac{\xi^2}{2}+\frac{\xi^4}{24}
  \end{equation*}
  are decreasing on $\left[0,1\right]$, we obtain
  \begin{equation*}
    \frac{\sin z}{z}
    \leq 1-\frac{1}{6}\left(\frac{77}{79}\right)^2
    +\frac{1}{120}\left(\frac{77}{79}\right)^4
    <\frac{17}{20}
  \end{equation*}
  and
  \begin{equation*}
    \cos z
    \leq 1-\frac{1}{2}\left(\frac{77}{79}\right)^2
    +\frac{1}{24}\left(\frac{77}{79}\right)^4
    <\frac{563}{1000}.
  \end{equation*}
  Therefore
  \begin{equation*}
    \alpha_0 f_{\beta_0}\left(\frac{20\pi}{29}\right)
    <\frac{175}{173}\cdot\frac{9}{20}\cdot\frac{17}{20}
    \left(2+\frac{29}{28}\cdot\frac{563}{1000}\right)
    =\frac{11066031}{11072000}<1.
  \end{equation*}
  Consequently,
  \begin{equation*}
    \frac{1}{n}\frac{\sin\upsilon}{\sin y}
    \left(2-\frac{\cos\upsilon}{\cos y}\right)<1.
  \end{equation*}
  By \eqref{eq:q1_first_mode_gamma_closed_form_ny}, this gives $\Gamma_{N,m,1}^{(1)}<0$.
\end{proof}

\begin{proof}[\textbf{Proof of Lemma \ref{lem:q1_intermediate_sign_reduction}}]
  Fix $2\leq\ell\leq N-2$.
  Set $x:=2\pi/N$, and define
  \begin{equation*}
    T_\ell:=\sum_{r=1}^{N-1}\cos(rx)\left(1-\cos(\ell rx)\right).
  \end{equation*}
  We first establish the full-cycle cancellation.
  By Lemma \ref{lem:finite_fourier_basis}, for every integer $k\not\equiv0\pmod N$,
  \begin{equation*}
    \sum_{r=1}^{N-1}\cos(k rx)=-1.
  \end{equation*}
  Also,
  \begin{equation*}
    \cos(rx)\cos(\ell rx)=\frac{1}{2}\left[\cos\left(\left(\ell-1\right)rx\right)+\cos\left(\left(\ell+1\right)rx\right)\right].
  \end{equation*}
  Since $2\leq\ell\leq N-2$, both $\ell-1$ and $\ell+1$ are nonzero modulo $N$.
  Hence
  \begin{equation*}
    \sum_{r=1}^{N-1}\cos(rx)\cos(\ell rx)=-1.
  \end{equation*}
  Therefore $T_\ell=0$.
  We next derive the complementary-sum representation.
  Decompose $\{1,\dots,N-1\}$ into the three ranges $\{1,\dots,m\}$, $\{m+1,\dots,N-m-1\}$, and $\{N-m,\dots,N-1\}$.
  Pairing the first and third ranges through $r\leftrightarrow N-r$ gives
  \begin{equation*}
    T_\ell=2\Gamma_{N,m,1}^{(\ell)}+\sum_{r=m+1}^{N-m-1}\cos(rx)\left(1-\cos(\ell rx)\right).
  \end{equation*}
  Since $T_\ell=0$, we obtain \eqref{eq:q1_intermediate_complement_formula}.
  The same representation yields nonnegativity in the stated parameter range. Every index $r\in\{m+1,\dots,N-m-1\}$ satisfies $N/4<r<3N/4$. Indeed, $r\geq m+1>N/4$ and $r\leq N-m-1<3N/4$, because $N\leq4m$. Consequently, $-\cos(rx)>0$ throughout the middle range, while $1-\cos(\ell rx)\geq0$. The complementary-sum formula therefore implies $\Gamma_{N,m,1}^{(\ell)}\geq0$.

  We now prove the strictness assertion. Suppose first that the middle range contains at least two indices. Since it is an interval of integers, it contains two consecutive indices $r$ and $r+1$. If $\Gamma_{N,m,1}^{(\ell)}=0$, then every nonnegative summand in \eqref{eq:q1_intermediate_complement_formula} must vanish. In particular,
  \begin{equation*}
    \cos(\ell rx)=\cos\left(\ell\left(r+1\right)x\right)=1.
  \end{equation*}
  Thus $N$ divides both $\ell r$ and $\ell\left(r+1\right)$, and hence $N$ divides $\ell$, contradicting $2\leq\ell\leq N-2$. Therefore $\Gamma_{N,m,1}^{(\ell)}>0$ whenever the middle range contains at least two indices.

  It remains to consider the case in which the middle range contains exactly one index. Then $N=2m+2$, and the unique middle index is $r=N/2$. Using Lemma~\ref{lem:finite_fourier_basis} with the first and second Fourier frequencies gives
  \begin{equation*}
    \sum_{r=1}^{N-1}\cos(rx)\left[1-\cos(rx)\right]=-1-\frac{1}{2}\left[\left(N-1\right)-1\right]=-\frac{N}{2}.
  \end{equation*}
  The unique middle term is $\cos(\pi)\left[1-\cos(\pi)\right]=-2$. Hence the same complementary decomposition yields
  \begin{equation*}
    2\Gamma_{N,m,1}^{(1)}-2=-\frac{N}{2},\quad \Gamma_{N,m,1}^{(1)}=1-\frac{N}{4}\leq0.
  \end{equation*}
  Thus the one-index case is incompatible with the additional assumption $\Gamma_{N,m,1}^{(1)}>0$.

  Therefore, whenever $\Gamma_{N,m,1}^{(1)}>0$, the middle range contains at least two indices, and the preceding argument gives $\Gamma_{N,m,1}^{(\ell)}>0$ for every $2\leq\ell\leq N-2$.
\end{proof}

\begin{proof}[\textbf{Proof of Lemma \ref{lem:q1_first_mode_zero_41}}]
  For sufficiency, if $\left(N,m\right)=\left(4,1\right)$, then
  \begin{equation*}
    \Gamma_{4,1,1}^{(1)}=\cos\left(\frac{\pi}{2}\right)\left(1-\cos\left(\frac{\pi}{2}\right)\right)=0.
  \end{equation*}
  For necessity, suppose that $\Gamma_{N,m,1}^{(1)}=0$ and set $x:=2\pi/N$. If $m=1$, then $\cos\left(x\right)\left(1-\cos\left(x\right)\right)=0$. 
  Since $0<x\leq2\pi/3$, the factor $1-\cos\left(x\right)$ is nonzero, and hence $\cos\left(x\right)=0$. 
  The only zero of the cosine function in this interval is $x=\pi/2$, so $N=4$.
  Suppose now that $m\geq2$. Set $n:=2m+1$ and define
  \begin{equation*}
    \Phi_m\left(\vartheta\right):=\sum_{r=1}^{m}\cos\left(r\vartheta\right)\left(1-\cos\left(r\vartheta\right)\right).
  \end{equation*}
  Then $\Phi_m\left(x\right)=0$ and $n\geq5$. Applying the finite cosine-sum formula at $\vartheta$ and $2\vartheta$, for $0<\vartheta<\pi$ we obtain
  \begin{equation}\label{eq:q1_zero_41_identity}
    4\Phi_m\left(\vartheta\right)=2\frac{\sin\left(n\vartheta/2\right)}{\sin\left(\vartheta/2\right)}-\frac{\sin\left(n\vartheta\right)}{\sin\left(\vartheta\right)}-n
    =\frac{\sin\left(n\vartheta/2\right)}{\sin\left(\vartheta/2\right)}\left(2-\frac{\cos\left(n\vartheta/2\right)}{\cos\left(\vartheta/2\right)}\right)-n.
  \end{equation}
  If $N\leq n$, then $\pi\leq nx/2<2\pi$, where the upper bound follows from $n=2m+1\leq2N-3<2N$. Since $m\geq2$ and $m\leq N-2$, we have $N\geq4$ and $0<x/2\leq\pi/4$. 
  Therefore $\sin\left(x/2\right)>0$, $2\cos\left(x/2\right)>1$, $\sin\left(nx/2\right)\leq0$, and $\cos\left(nx/2\right)\leq1$. 
  It follows that $2-\cos\left(nx/2\right)/\cos\left(x/2\right)>0$, so \eqref{eq:q1_zero_41_identity} gives $\Phi_m\left(x\right)<0$, contradicting $\Phi_m\left(x\right)=0$.
  Hence $N\geq2m+2$. On the other hand, Lemma \ref{lem:q1_first_mode_positive_range} excludes $N\geq3m+2$. 
  Therefore
  \begin{equation}\label{eq:q1_zero_41_range}
    2m+2\leq N\leq3m+1.
  \end{equation}
  We claim that $N=6$. Once this is proved, \eqref{eq:q1_zero_41_range} and $m\geq2$ give $m=2$, whereas
  \begin{equation*}
    \Gamma_{6,2,1}^{(1)}=\frac{1}{2}\left(1-\frac{1}{2}\right)-\frac{1}{2}\left(1+\frac{1}{2}\right)=-\frac{1}{2},
  \end{equation*}
  contradicting the assumption $\Gamma_{N,m,1}^{(1)}=0$ and thereby establishing necessity. 

  It remains to prove the claim. Define
  \begin{equation*}
    \mathcal P_0\left(X\right):=2,\quad \mathcal P_1\left(X\right):=X,\quad 
    \mathcal P_{r+1}\left(X\right):=X\mathcal P_r\left(X\right)-\mathcal P_{r-1}\left(X\right),\quad r\geq1.
  \end{equation*}
  A direct induction shows that $\mathcal P_r\in\mathbb Z[X]$ and that $\mathcal P_r$ is monic of degree $r$ for $r\geq1$. 
  A second induction gives the associated trigonometric representation. 
  The cases $r=0,1$ are immediate, and the induction step follows from the defining relation together with 
  $2\cos\left(\vartheta\right)\cos\left(r\vartheta\right)=\cos\left(\left(r+1\right)\vartheta\right)+\cos\left(\left(r-1\right)\vartheta\right)$. 
  Therefore $\mathcal P_r\left(2\cos\left(\vartheta\right)\right)=2\cos\left(r\vartheta\right)$ for every $r\geq0$. Set
  \begin{equation*}
    \mathcal H_m\left(X\right):=2\sum_{r=1}^{m}\mathcal P_r\left(X\right)-\sum_{r=1}^{m}\mathcal P_{2r}\left(X\right)-2m,\quad \mathcal K_N\left(X\right):=\mathcal P_N\left(X\right)-2.
  \end{equation*}
  Then $\mathcal H_m,\mathcal K_N\in\mathbb Z[X]$, the polynomial $\mathcal K_N$ is monic of degree $N$, and
  \begin{equation}\label{eq:q1_zero_41_polynomial_values}
    \mathcal H_m\left(2\cos\left(\vartheta\right)\right)=4\Phi_m\left(\vartheta\right),\quad 
    \mathcal K_N\left(2\cos\left(\vartheta\right)\right)=2\cos\left(N\vartheta\right)-2.
  \end{equation}
  To determine the zeros of $\mathcal K_N$, suppose that $\mathcal K_N\left(X_0\right)=0$ and choose a complex root $z$ of $z^2-X_0z+1=0$. 
  Then $z\neq0$ and $X_0=z+z^{-1}$. Since $\mathcal P_0\left(X_0\right)=2$ and $\mathcal P_1\left(X_0\right)=z+z^{-1}$, a direct induction using the defining relation gives 
  $\mathcal P_r\left(X_0\right)=z^r+z^{-r}$ for every $r\geq0$. 
  Hence $0=\mathcal K_N\left(X_0\right)=z^N+z^{-N}-2=z^{-N}\left(z^N-1\right)^2$, so $z^N=1$. 
  Thus $z=\exp\left(2\pi\mathrm i j/N\right)$ and $X_0=2\cos\left(2\pi j/N\right)$ for some $j\in\left\{0,1,\ldots,N-1\right\}$. 
  Every value of this form is a zero by \eqref{eq:q1_zero_41_polynomial_values}, and therefore
  \begin{equation}\label{eq:q1_zero_41_K_roots}
    \mathcal K_N\left(X_0\right)=0\quad\Longleftrightarrow\quad X_0=2\cos\left(\frac{2\pi j}{N}\right)\quad\text{for some}\quad j\in\left\{0,1,\ldots,N-1\right\}.
  \end{equation}
  Set $c_N:=2\cos x$. Evaluating \eqref{eq:q1_zero_41_polynomial_values} at $\vartheta=0$ and $\vartheta=x$, and using $\Phi_m\left(x\right)=0$ and $Nx=2\pi$, gives 
  $\mathcal H_m\left(2\right)=\mathcal K_N\left(2\right)=0$ and $\mathcal H_m\left(c_N\right)=\mathcal K_N\left(c_N\right)=0$. 
  Moreover, \eqref{eq:q1_zero_41_range} gives $N\geq6$, so $1\leq c_N<2$ and the two common zeros are distinct. We next prove the discrete negativity estimate
  \begin{equation}\label{eq:q1_zero_41_negative_interval}
    \Phi_m\left(\frac{2\pi k}{N}\right)<0\quad\text{for every integer}\quad 2\leq k\leq\left\lfloor\frac{N}{2}\right\rfloor.
  \end{equation}
  Fix an integer $k$ in this range. If $N$ is even and $k=N/2$, then $\Phi_m\left(\pi\right)=-2\left\lceil m/2\right\rceil<0$. Otherwise, set $s:=\pi k/N$, so $0<s<\pi/2$. 
  By \eqref{eq:q1_zero_41_range}, we have $n<N$ and $2N<3n$, and hence $ns\geq2\pi n/N>4\pi/3$. 
  If $\sin\left(ns\right)\leq0$, then $\sin\left(ns\right)/\sin s<n/4$. If $\sin\left(ns\right)>0$, then $ns>2\pi$, so $s>2\pi/n$. 
  Therefore $\sin s>\sin\left(2\pi/n\right)>4/n$, and again $\sin\left(ns\right)/\sin s<n/4$. 
  Since $2\leq k<N/2$, we also have $\min\left\{2s,\pi-2s\right\}\geq\pi/N$, and hence $\sin\left(2s\right)\geq\sin\left(\pi/N\right)\geq3/N>2/n$. 
  It follows that $\sin\left(2ns\right)/\sin\left(2s\right)>-n/2$. Applying \eqref{eq:q1_zero_41_identity} at $\vartheta=2s$ gives $4\Phi_m\left(2s\right)<n/2+n/2-n=0$, proving \eqref{eq:q1_zero_41_negative_interval}.
  For $2\leq j\leq N-2$, set $k_j:=\min\left\{j,N-j\right\}$. Then $2\leq k_j\leq\left\lfloor N/2\right\rfloor$. 
  Since $\Phi_m$ is even and $2\pi$-periodic, \eqref{eq:q1_zero_41_polynomial_values} and \eqref{eq:q1_zero_41_negative_interval} give 
  $\mathcal H_m\left(2\cos\left(2\pi j/N\right)\right)=4\Phi_m\left(2\pi k_j/N\right)<0$. 
  Combining this with \eqref{eq:q1_zero_41_K_roots}, the only distinct common zeros of $\mathcal H_m$ and $\mathcal K_N$ are $2$ and $c_N$.
  We now remove the known common zero $2$. Since $\mathcal K_N\left(2\right)=0$, the derivative at $2$ can be evaluated from its limit definition and then along $X=2\cos\left(\vartheta\right)$:
  \begin{equation*}
    \mathcal K_N'\left(2\right)=\lim_{X\to2}\frac{\mathcal K_N\left(X\right)}{X-2}
    =\lim_{\vartheta\to0}\frac{\mathcal K_N\left(2\cos\left(\vartheta\right)\right)}{2\cos\left(\vartheta\right)-2}
    =\lim_{\vartheta\to0}\left(\frac{\sin\left(N\vartheta/2\right)}{\sin\left(\vartheta/2\right)}\right)^2=N^2.
  \end{equation*}
  Hence $2$ is a simple zero of $\mathcal K_N$. Since both polynomials have integer coefficients and vanish at $2$, define
  \begin{equation*}
    \widetilde{\mathcal H}_m\left(X\right):=\frac{\mathcal H_m\left(X\right)}{X-2},\quad \widetilde{\mathcal K}_N\left(X\right):=\frac{\mathcal K_N\left(X\right)}{X-2}.
  \end{equation*}
  These reduced polynomials belong to $\mathbb Z[X]$. Since $c_N\neq2$, it remains a common zero, while $\widetilde{\mathcal K}_N\left(2\right)=\mathcal K_N'\left(2\right)=N^2\neq0$. 
  Thus $c_N$ is their only distinct common zero. 
  Let $\mathcal R_{N,m}\in\mathbb Q[X]$ be the monic greatest common divisor of $\widetilde{\mathcal H}_m$ and $\widetilde{\mathcal K}_N$ \cite[Sec. 8.1 and Sec. 9.2]{D-F}. 
  The polynomial $\mathcal R_{N,m}$ divides both reduced polynomials and can be expressed as a linear combination of them with coefficients in $\mathbb Q[X]$. 
  Hence its zeros are exactly their common zeros.
  Therefore $\mathcal R_{N,m}\left(X\right)=\left(X-c_N\right)^\nu$ for some positive integer $\nu$. 
  The coefficient of $X^{\nu-1}$ is $-\nu c_N$, so $\mathcal R_{N,m}\in\mathbb Q[X]$ implies $c_N\in\mathbb Q$.
  Choose $a\in\mathbb Z$ and $b\in\mathbb N$ such that $c_N=a/b$ and $\gcd\left(a,b\right)=1$. 
  Since $\mathcal K_N$ is monic with integer coefficients, multiplying $\mathcal K_N\left(a/b\right)=0$ by $b^N$ shows that $b$ divides $a^N$. 
  The condition $\gcd\left(a,b\right)=1$ forces $b=1$, and hence $c_N\in\mathbb Z$. Since $1\leq c_N<2$, we obtain $c_N=1$. 
  Thus $2\cos x=1$, and $0<x\leq\pi/3$ gives $x=\pi/3$ and $N=6$. This proves the claim. 
  
\end{proof}

We now prepare the proofs of Lemma \ref{lem:q2_positive_factors} and Lemma \ref{lem:q2_first_mode_sign}.
For the auxiliary calculation below and the proofs of these two lemmas, set $n:=2m+1$ and $y:=\pi/N$.
For every integer $k$, define
\begin{equation*}
  Q_k:=
  \begin{cases}
    n, & k\equiv0\pmod N,\\[0.4em]
    \displaystyle\frac{\sin\left(nky\right)}{\sin\left(ky\right)}, & k\not\equiv0\pmod N.
  \end{cases}
\end{equation*}
Since $n$ is odd, direct substitution gives
\begin{equation}\label{eq:q2_Qk_symmetry}
  Q_{k+N}=Q_k,\quad Q_{N-k}=Q_k,\quad k\in\mathbb Z.
\end{equation}
Thus $Q_k$ is $N$-periodic in $k$ and satisfies $Q_N=Q_0=n$.
We first establish the finite cosine-sum identity
\begin{equation}\label{eq:q2_finite_cosine_sum_Q}
  \sum_{r=1}^{m}\cos\left(\frac{2\pi kr}{N}\right)
  =\frac{Q_k-1}{2},\quad k\in\mathbb Z.
\end{equation}
If $k\equiv0\pmod N$, then both sides of \eqref{eq:q2_finite_cosine_sum_Q} equal
$m=\left(n-1\right)/2$.
If $k\not\equiv0\pmod N$, Lemma \ref{lem:finite_trig_sums}, applied with $x=2ky$, gives
\begin{equation*}
  \sum_{r=1}^{m}\cos\left(\frac{2\pi kr}{N}\right)
  =\frac{\sin\left(mky\right)\cos\left(\left(m+1\right)ky\right)}{\sin\left(ky\right)}
  =\frac{\sin\left(nky\right)-\sin\left(ky\right)}{2\sin\left(ky\right)}
  =\frac{Q_k-1}{2}.
\end{equation*}
Thus \eqref{eq:q2_finite_cosine_sum_Q} holds for every $k\in\mathbb Z$.
For $\ell\in[N-1]$, the definition of the mode factor and the product-to-sum identity give
\begin{equation*}
  \Gamma_{N,m,2}^{(\ell)}
  =\sum_{r=1}^{m}\cos\left(\frac{4\pi r}{N}\right)
  -\frac{1}{2}\sum_{r=1}^{m}\cos\left(\frac{2\pi\left(\ell-2\right)r}{N}\right)
  -\frac{1}{2}\sum_{r=1}^{m}\cos\left(\frac{2\pi\left(\ell+2\right)r}{N}\right).
\end{equation*}
Applying \eqref{eq:q2_finite_cosine_sum_Q} with $k=2$, $k=\ell-2$, and $k=\ell+2$, we obtain
\begin{equation}\label{eq:q2_Q_representation}
  \Gamma_{N,m,2}^{(\ell)}
  =\frac{2Q_2-Q_{\ell-2}-Q_{\ell+2}}{4},\quad \ell\in[N-1].
\end{equation}

\begin{proof}[\textbf{Proof of Lemma \ref{lem:q2_positive_factors}}]
  Set $\upsilon:=ny$, $c_\upsilon:=\cos\upsilon$, and $c_y:=\cos y$.
  Condition \eqref{eq:q2_positive_range} gives $0<\upsilon\leq\pi/3$.

  \medskip
  \noindent
  \textbf{Step 1. Claim that $\bm{2Q_2>Q_1+Q_3}$.}
  A direct simplification gives
  \begin{equation*}
    Q_2-\frac{Q_1+Q_3}{2}=-\frac{\sin\upsilon\left(c_\upsilon-c_y\right)\left(2c_\upsilon c_y-2c_y^2+1\right)}{\sin y\,c_y\left(4c_y^2-1\right)}.
  \end{equation*}
  Since $0<y<\upsilon\leq\pi/3$, we have $\sin\upsilon>0$, $c_\upsilon-c_y<0$, and $c_y>1/2$.
  Moreover, $c_\upsilon\geq1/2$, so $2c_\upsilon c_y-2c_y^2+1\geq1+c_y-2c_y^2=\left(1-c_y\right)\left(2c_y+1\right)>0$.
  The denominator in the displayed identity is positive, and hence $Q_2-\left(Q_1+Q_3\right)/2>0$.
  This proves the claim.

  \medskip
  \noindent
  \textbf{Step 2. Claim that $\bm{2Q_2>Q_0+Q_4}$.}
  Set $\psi:=2\upsilon$ and $\psi_0:=2y=\psi/n$.
  Then $0<\psi\leq2\pi/3$, and direct substitution gives
  \begin{equation*}
    2Q_2-Q_0-Q_4=\frac{\sin\psi}{\sin\psi_0}\left(2-\frac{\cos\psi}{\cos\psi_0}\right)-n.
  \end{equation*}

  \medskip
  \noindent
  \emph{Subcase 2.1: $0<\psi\leq\pi/2$.}
  Since $n\geq5$, we have $\psi_0\leq\psi/5<\psi/3$.
  Using $\sin\psi_0<\psi/n$ and $\cos\psi_0\geq\cos\left(\psi/3\right)$, together with $\cos\psi\geq0$, we obtain
  \begin{equation*}
    2Q_2-Q_0-Q_4
    >n\left[\frac{\sin\psi}{\psi}\left(2-\frac{\cos\psi}{\cos\left(\psi/3\right)}\right)-1\right]
    =n\left[\frac{\sin\psi}{\psi}\left(1+4\sin^2\left(\frac{\psi}{3}\right)\right)-1\right].
  \end{equation*}
  It remains to prove
  \begin{equation*}
    \sin\psi\left(1+4\sin^2\left(\psi/3\right)\right)>\psi.
  \end{equation*}
  Write $\psi=3w$ and define
  \begin{equation*}
    f(w):=\sin(3w)\left(1+4\sin^2w\right)-3w,\quad
    p(x):=80x^4+80x^3-56x^2-56x-3.
  \end{equation*}
  For $0<w\leq\pi/6$, direct differentiation gives
  \begin{equation*}
    f'(w)=\left(1-\cos w\right)p(\cos w).
  \end{equation*}
  Also, $p'(x)=8h(x)$, where
  \begin{equation*}
    h(x):=40x^3+30x^2-14x-7.
  \end{equation*}
  Since $h'(x)=120x^2+60x-14>0$ on $\left[\sqrt{3}/2,1\right]$ and $h\left(\sqrt{3}/2\right)=8\sqrt{3}+31/2>0$, both $h(x)$ and $p'(x)$ are positive on this interval.
  Thus $p$ is strictly increasing there.
  Since $p\left(\sqrt{3}/2\right)=2\sqrt{3}>0$, we have $f'\left(w\right)>0$ for $0<w\leq\pi/6$.
  Together with $f\left(0\right)=0$, this proves the desired inequality.

  \medskip
  \noindent
  \emph{Subcase 2.2: $\pi/2\leq\psi\leq2\pi/3$.}
  Since $\cos\psi\leq0$ and $0<\cos\psi_0\leq1$, the displayed identity gives
  \begin{equation*}
    2Q_2-Q_0-Q_4>n\left[\frac{\sin\psi}{\psi}\left(2-\cos\psi\right)-1\right].
  \end{equation*}
  It is enough to prove $\sin\psi\left(2-\cos\psi\right)>\psi$.
  Define $g\left(x\right):=\sin x\left(2-\cos x\right)-x$.
  On $\left[\pi/2,2\pi/3\right]$, we have $g'\left(x\right)=2\cos x\left(1-\cos x\right)\leq0$.
  Hence $g\left(\psi\right)\geq g\left(2\pi/3\right)=5\sqrt{3}/4-2\pi/3>0$.
  This proves the claim.

  \medskip
  \noindent
  \textbf{Step 3. Claim that $\bm{Q_5<Q_3}$.}
  The function $R_0\left(z\right):=\sin\left(nz\right)/\sin z$ is strictly decreasing on $\left(0,\pi/n\right)$.
  Indeed, $R_0'\left(z\right)/R_0\left(z\right)=n\cot\left(nz\right)-\cot z<0$, because $x\cot x$ is strictly decreasing on $\left(0,\pi\right)$.
  Since $m\geq2$ and $N\geq6m+3$, one has $N\geq15$, so $0<3y<5y<\pi$.
  If $5\upsilon<\pi$, then $3y<5y<\pi/n$, and the monotonicity of $R_0$ gives $Q_5<Q_3$.
  If $5\upsilon\geq\pi$, then $\pi\leq5\upsilon\leq5\pi/3<2\pi$, so $Q_5\leq0$, while $3\upsilon\leq\pi$ gives $Q_3\geq0$.
  If $Q_3=Q_5=0$, then $\sin\left(3\upsilon\right)=\sin\left(5\upsilon\right)=0$.
  Since $3$ and $5$ are coprime, this would force $\upsilon$ to be an integer multiple of $\pi$, contradicting $0<\upsilon\leq\pi/3$.
  Hence $Q_5<Q_3$.

  \medskip
  \noindent
  \textbf{Step 4. The tail comparison.}
  We prove $Q_k<Q_2$ for every $3\leq k\leq N-3$.
  By \eqref{eq:q2_Qk_symmetry}, it is enough to consider $3\leq k\leq N/2$.
  Set $z:=ky$.
  If $nz<\pi$, then $2y<z<\pi/n$, and the monotonicity of $R_0$ gives $Q_k<Q_2$.
  If $\sin\left(nz\right)\leq0$, then $Q_k\leq0<Q_2$.
  It remains only to treat the indices for which $nz\geq\pi$ and $\sin\left(nz\right)>0$.
  Since $\sin s>0$ for $s\in\left(2j\pi,\left(2j+1\right)\pi\right)$, $j\in\mathbb Z$, these indices satisfy $nz\geq2\pi$, and hence $z\geq2\pi/n$.
  Since $z\leq\pi/2$, we have $Q_k\leq1/\sin z\leq1/\sin\left(2\pi/n\right)$.
  On the other hand, since $R_0$ is strictly decreasing on $\left(0,\pi/n\right)$ and $2y\leq2\pi/\left(3n\right)$, we obtain $Q_2\geq\sin\left(2\pi/3\right)/\sin\left(2\pi/\left(3n\right)\right)$.
  Set $\beta:=2\pi/\left(3n\right)$.
  Since $n\geq5$, one has $0<\beta<\pi/6$ and $\sin\left(3\beta\right)/\sin\beta=3-4\sin^2\beta>2$.
  Therefore $\left(\sqrt{3}/2\right)\sin\left(3\beta\right)>\sin\beta$.
  Since $3\beta=2\pi/n$, this is equivalent to
  \begin{equation*}
    \frac{\sin\left(2\pi/3\right)}{\sin\left(2\pi/\left(3n\right)\right)}>\frac{1}{\sin\left(2\pi/n\right)}.
  \end{equation*}
  Combining the last inequality with the upper bound for $Q_k$ and the lower bound for $Q_2$, we obtain $Q_k<Q_2$.
  This proves the claim.

  \medskip
  \noindent
  \textbf{Step 5. Positivity of all mode factors.}
  We verify positivity through \eqref{eq:q2_Q_representation}.
  Since $m\geq2$ and $N\geq6m+3$, we have $N\geq15$, so the modes $1,2,3,4$ and their symmetric partners $N-1,N-2,N-3,N-4$ are distinct.
  For $\ell=1$ and $\ell=N-1$, Step 1 gives $2Q_2>Q_{\ell-2}+Q_{\ell+2}$.
  For $\ell=2$ and $\ell=N-2$, Step 2 gives $2Q_2>Q_{\ell-2}+Q_{\ell+2}$.
  For $\ell=3$ and $\ell=N-3$, Steps 1 and 3 give $Q_1+Q_5<Q_1+Q_3<2Q_2$.
  For $\ell=4$ and $\ell=N-4$, one of the indices $\ell-2$ and $\ell+2$ has circular distance $2$ from $0$, while the other has circular distance at least $3$. Hence one term equals $Q_2$ and the other is strictly smaller than $Q_2$.
  For every remaining mode, both indices have circular distance at least $3$ from $0$, so both terms are strictly smaller than $Q_2$.
  Thus $2Q_2>Q_{\ell-2}+Q_{\ell+2}$ for every $\ell\in[N-1]$, and \eqref{eq:q2_Q_representation} gives $\Gamma_{N,m,2}^{(\ell)}>0$ for every $\ell\in[N-1]$.
\end{proof}

\begin{proof}[\textbf{Proof of Lemma \ref{lem:q2_first_mode_sign}}]
  Retaining the notation above, set $\upsilon:=ny$, $c_\upsilon:=\cos\upsilon$, and $c_y:=\cos y$.
  We first reduce the sign of $\Gamma_{N,m,2}^{(1)}$ to that of a scalar expression.
  By \eqref{eq:q2_Q_representation}, $2\Gamma_{N,m,2}^{(1)}=Q_2-\left(Q_1+Q_3\right)/2$.
  A direct simplification gives
  \begin{equation*}
    2\Gamma_{N,m,2}^{(1)}=-\frac{\sin\upsilon\left(c_\upsilon-c_y\right)\left(2c_\upsilon c_y-2c_y^2+1\right)}{\sin y\,c_y\left(4c_y^2-1\right)}.
  \end{equation*}
  Since $N\geq2m+2$, we have $0<y<\upsilon<\pi$.
  Also, $N\geq4$ gives $c_y>0$ and $4c_y^2-1>0$.
  Since $c_\upsilon-c_y<0$, the sign of $\Gamma_{N,m,2}^{(1)}$ is the sign of $\chi:=2c_\upsilon c_y-2c_y^2+1$.
  Define $r_\ast:=c_y-1/\left(2c_y\right)$ and $\Delta:=3n-N$.
  Then $\chi\leq0$ is equivalent to $c_\upsilon\leq r_\ast$.
  Condition \eqref{eq:q2_lower_intermediate_range} gives $\Delta\geq1$, and the identity $\upsilon=\left(\pi+\Delta y\right)/3$ follows from $\Delta=3n-N$.
  We shall use below that the function $x\mapsto4x^3-3x$ is strictly decreasing on $\left[0,1/2\right]$.

  \medskip
  \noindent
  \emph{Case 1. $\Delta\geq2$.}
  Suppose first that $\Delta\geq2$, and set $d_2:=\cos\left(\left(\pi+2y\right)/3\right)$.
  Since the cosine function is decreasing on $\left[0,\pi\right]$, the identity $\upsilon=\left(\pi+\Delta y\right)/3$ gives $c_\upsilon\leq d_2$.
  Moreover, $N\geq4$ implies $c_y\in\left[\sqrt{2}/2,1\right)$ and $d_2,r_\ast\in\left[0,1/2\right]$.
  The triple-angle identity gives $4d_2^3-3d_2=1-2c_y^2$, while direct simplification yields
  \begin{equation*}
    4r_\ast^3-3r_\ast-\left(1-2c_y^2\right)=\frac{\left(c_y-1\right)\left(2c_y^2-1\right)\left(4c_y^3+6c_y^2-c_y-1\right)}{2c_y^3}\leq0.
  \end{equation*}
  The cubic factor $4c_y^3+6c_y^2-c_y-1$ is positive on $\left[\sqrt{2}/2,1\right)$, because its derivative is positive there and its value at $\sqrt{2}/2$ is $2+\sqrt{2}/2>0$.
  Since the function $x\mapsto4x^3-3x$ is strictly decreasing on $\left[0,1/2\right]$, we obtain $r_\ast\geq d_2\geq c_\upsilon$.
  Equality $c_\upsilon=r_\ast$ would require equality in both inequalities $r_\ast\geq d_2$ and $d_2\geq c_\upsilon$, hence $\Delta=2$ and $c_y=\sqrt{2}/2$.
  The latter condition gives $N=4$, whereas $\Delta=3n-N=2$ would then give $n=2$, contradicting $n=2m+1$.
  Therefore $c_\upsilon<r_\ast$, and hence $\Gamma_{N,m,2}^{(1)}<0$.

  \medskip
  \noindent
  \emph{Case 2. $\Delta=1$.}
  It remains to consider $\Delta=1$.
  Then $N=3n-1$.
  Set $d_1:=\cos\left(\left(\pi+y\right)/3\right)$.
  Since $\upsilon=\left(\pi+y\right)/3$, we have $c_\upsilon=d_1$.
  Since $m\geq1$, we have $n=2m+1\geq3$, and therefore $N=3n-1\geq8$.
  Hence $c_y\geq\cos\left(\pi/8\right)$.
  Moreover, $d_1,r_\ast\in\left[0,1/2\right]$, and the triple-angle identity gives $4d_1^3-3d_1=-c_y$.
  A direct simplification gives
  \begin{equation*}
    4r_\ast^3-3r_\ast+c_y=\frac{\left(c_y-1\right)\left(c_y+1\right)\left(8c_y^4-8c_y^2+1\right)}{2c_y^3}\leq0.
  \end{equation*}
  Indeed, $8c_y^4-8c_y^2+1=8\left(c_y^2-\cos^2\left(\pi/8\right)\right)\left(c_y^2-\sin^2\left(\pi/8\right)\right)\geq0$.
  Equality in the last displayed inequality holds only when $c_y=\cos\left(\pi/8\right)$, which is equivalent to $N=8$.
  Since $N=3n-1$, this gives $n=3$ and therefore $m=1$.
  Using again that the function $x\mapsto4x^3-3x$ is strictly decreasing on $\left[0,1/2\right]$, we obtain $c_\upsilon=d_1\leq r_\ast$, with equality exactly at $\left(N,m\right)=\left(8,1\right)$.
  Hence $\Gamma_{N,m,2}^{(1)}\leq0$, and equality occurs exactly at $\left(N,m\right)=\left(8,1\right)$.
  Combining the two cases proves the result.
\end{proof}

\begin{proof}[\textbf{Proof of Lemma \ref{lem:q2_degenerate_instability}}]
  For the eight-oscillator nearest-neighbor model, the edgewise phase difference of the $2$-twisted profile is $\pi/2$ on every associated edge.
  Hence every linearized edge weight contains the factor $\cos\left(\pi/2\right)=0$, and $\Gamma_{8,1,2}^{(\ell)}=0$ for all $\ell\in[7]$.
  Thus the DFT-based criterion gives no conclusion, and we use a nonlinear reduction.

  \medskip
  \noindent
  \textbf{Step 1. The four-periodic invariant subsystem.}
  Let $F_4$ and $F_8$ denote the vector fields of the four-oscillator and eight-oscillator nearest-neighbor models with the same coupling strength $K>0$.
  Define the linear duplication map $\widehat{\iota}:\mathbb R^4\longrightarrow\mathbb R^8$ by
  \begin{equation*}
    \widehat{\iota}\left(u_1,u_2,u_3,u_4\right):=\left(u_1,u_2,u_3,u_4,u_1,u_2,u_3,u_4\right).
  \end{equation*}
  Let $\iota:\mathbb T^4\longrightarrow\mathbb T^8$ be the map induced by $\widehat{\iota}$, and set
  \begin{equation*}
    \mathcal S:=\iota\left(\mathbb T^4\right)=\left\{\theta\in\mathbb T^8:\theta_{i+4}=\theta_i,\quad i\in[4]\right\}.
  \end{equation*}
  For every $x\in\mathbb T^4$ and $i\in[4]$,
  \begin{equation*}
    \left(F_8\left(\iota\left(x\right)\right)\right)_i=K\sin\left(x_{\langle i+1\rangle_4}-x_i\right)=\left(F_4\left(x\right)\right)_i,
  \end{equation*}
  and the component with index $i+4$ has the same value.
  Therefore, under the standard identifications of tangent spaces with Euclidean spaces,
  \begin{equation*}
    F_8\left(\iota\left(x\right)\right)=\widehat{\iota}\left(F_4\left(x\right)\right),\quad x\in\mathbb T^4.
  \end{equation*}
  Hence $\mathcal S$ is invariant under the eight-oscillator flow.

  \medskip
  \noindent
  \textbf{Step 2. Identification of the twisted profiles and quotient diameter.}
  Let $\phi_8^{(2)}$ and $\phi_4^{(1)}$ denote the profiles in \eqref{eq:q_twisted_profile_definition} with $\left(N,q\right)=\left(8,2\right)$ and $\left(N,q\right)=\left(4,1\right)$, respectively.
  Set $\zeta:=2\pi\left(0,0,0,0,1,1,1,1\right)$.
  For the chosen real-valued representatives, we have
  \begin{equation*}
    \phi_8^{(2)}=\widehat{\iota}\left(\phi_4^{(1)}\right)+\zeta.
  \end{equation*}
  Both profiles have common angular velocity $K$, since their nonzero associated edgewise phase differences are equal to $\pi/2$ modulo $2\pi$.
  For $d\in\left\{4,8\right\}$ and $u\in\mathbb R^d$, write
  \begin{equation*}
    D_d\left(u\right):=\max_{i\in[d]}u_i-\min_{i\in[d]}u_i.
  \end{equation*}
  Duplication preserves the quotient diameter, namely $D_8\left(\widehat{\iota}\left(u\right)\right)=D_4\left(u\right)$ for every $u\in\mathbb R^4$.

  \medskip
  \noindent
  \textbf{Step 3. A family of four-periodic perturbations.}

  \noindent
  For $\delta>0$, set $v^\delta:=\left(0,-\delta/6,-\delta/3,-\delta/2\right)$.
  Define $x^\delta\left(0\right):=\phi_4^{(1)}+v^\delta$, that is,
  \begin{equation*}
    x^\delta\left(0\right)=\left(0,\frac{\pi}{2}-\frac{\delta}{6},\pi-\frac{\delta}{3},\frac{3\pi}{2}-\frac{\delta}{2}\right).
  \end{equation*}
  Its cyclic phase-difference representatives in $[0,2\pi)$ are
  \begin{equation*}
    P_0\left(\delta\right):=\left(\frac{\pi}{2}-\frac{\delta}{6},\frac{\pi}{2}-\frac{\delta}{6},\frac{\pi}{2}-\frac{\delta}{6},\frac{\pi}{2}+\frac{\delta}{2}\right).
  \end{equation*}
  It was proved in \cite[Sec. 4.1]{H-K} that, for every sufficiently small $\delta>0$, the corresponding four-oscillator trajectory enters the basin of the synchronized orbit and converges to it.
  The normalization of the coupling strength in that reference is immaterial here, because changing $K>0$ only amounts to a positive rescaling of time.
  Let $x^\delta\left(t\right)$ denote the solution of the lifted four-oscillator system with initial value $x^\delta\left(0\right)$, and define
  \begin{equation*}
    \widetilde{\theta}^{\delta}\left(t\right):=\widehat{\iota}\left(x^\delta\left(t\right)\right)+\zeta.
  \end{equation*}
  By the duplication identity in Step 1 and the $2\pi$-periodicity of the vector field, $\widetilde{\theta}^{\delta}\left(t\right)$ is a solution of the lifted eight-oscillator system.
  Moreover,
  \begin{equation*}
    \widetilde{\theta}^{\delta}\left(0\right)=\phi_8^{(2)}+\widehat{\iota}\left(v^\delta\right).
  \end{equation*}
  Its projection to $\mathbb T^8$ is the image under $\iota$ of the projection of $x^\delta\left(0\right)$ to $\mathbb T^4$, and the initial quotient diameter is
  \begin{equation}\label{eq:q2_initial_diameter}
    D_8\left(\widetilde{\theta}^{\delta}\left(0\right)-\phi_8^{(2)}\right)=\frac{\delta}{2}.
  \end{equation}
  Thus these initial data approach the $2$-twisted dancing orbit in quotient diameter as $\delta\to0$.
  By construction, the projection of $\widetilde{\theta}^{\delta}\left(t\right)$ to $\mathbb T^8$ is the image under $\iota$ of the projection of $x^\delta\left(t\right)$ to $\mathbb T^4$.
  Hence this projected eight-oscillator trajectory converges to the synchronized orbit.

  \medskip
  \noindent
  \textbf{Step 4. Escape in the quotient.}
  The synchronized orbit and the $2$-twisted dancing orbit represent distinct points in the quotient phase space.
  Choose a quotient neighborhood $\mathcal V$ of the point represented by the $2$-twisted dancing orbit whose closure does not contain the point represented by the synchronized orbit.
  Then choose $\eta>0$ such that the quotient-diameter ball of radius $\eta$ about the $2$-twisted dancing orbit is contained in $\mathcal V$.
  Given any sufficiently small $\varepsilon>0$, choose $\delta>0$ so small that the convergence result in \cite[Sec. 4.1]{H-K} applies and $0<\delta/2<\min\left\{\varepsilon,\eta\right\}$.
  By \eqref{eq:q2_initial_diameter}, the corresponding initial point satisfies
  \begin{equation*}
    0<D_8\left(\widetilde{\theta}^{\delta}\left(0\right)-\phi_8^{(2)}\right)<\varepsilon.
  \end{equation*}
  As long as the quotient trajectory remains in $\mathcal V$, the lifted solution $\widetilde{\theta}^{\delta}\left(t\right)$ gives the corresponding continuous representative for comparison with $\phi_8^{(2)}+Kt\mathbf 1$.
  Since the quotient trajectory converges to the point represented by the synchronized orbit, it eventually leaves $\mathcal V$, and hence it also leaves the quotient-diameter ball of radius $\eta$.
  The function
  \begin{equation*}
    t\longmapsto D_8\left(\widetilde{\theta}^{\delta}\left(t\right)-Kt\mathbf 1-\phi_8^{(2)}\right)
  \end{equation*}
  is continuous and has initial value strictly smaller than $\eta$.
  Therefore, there exists a first time $t_\delta>0$ such that
  \begin{equation*}
    D_8\left(\widetilde{\theta}^{\delta}\left(t_\delta\right)-Kt_\delta\mathbf 1-\phi_8^{(2)}\right)=\eta.
  \end{equation*}
  Since $\eta$ is independent of $\varepsilon$ and $\delta$, the $2$-twisted dancing orbit is nonlinearly unstable in the quotient sense.
\end{proof}

\end{document}